\setlist{  
  listparindent=12pt,
  parsep=0pt,
}
\def\linksor#1{{\overrightarrow{\LL}^+(#1)}}
\def\linksu#1{{{\LL}^+(#1)}}
\def\aa{{\mathscr A}}
\def\numb#1{{\llbracket{\linksor{#1}}\rrbracket}}
\def\numbor#1{{\llbracket{\linksor{#1}}\rrbracket}}
\def\numbu#1{{\llbracket{\linksu{#1}}\rrbracket}}
\def\LL{{\mathcal L}}
\newcommand*{\Scale}[2][4]{\scalebox{#1}{\ensuremath{#2}}}%
\def\rr{\mathscr R}
\def\bb{{\mathscr B}}
\def\ff{{\mathscr F}}
\def\Isot#1#2#3{{ {#1}\, \bigl| \, {#2}\to{#3}}}
\def\ii{{\mathcal I}}
\def\isot#1#2{{{#1}\mapsto{#2}}}
\def\identity{\mbox{\Large$\iota$}}
\def\reverse{{\nu}}
\def\isop#1#2#3{{{#1}\stackrel{#3}{\longmapsto}{#2}}}
\def\Isop#1#2#3#4{{ {#1} \,\bigl| \,  {#2}\stackrel{#4}{\longrightarrow}{#3}}}
\def\jj{{\mathcal J}}
\def\In#1{{{#1}^{-1}}}
\def\Ov#1{{\overline{#1}}}
\def\IO#1{{\In{(\Ov{#1})}}}
\def\Ho{{\mathcal H}}
\def\Ve{{\mathcal V}}
\def\ZZ{{\mathbb Z}}
\definecolor{lgray}{rgb}{0.85, 0.85, 0.85}
\def\gl#1{{\textcolor{lgray}{#1}}}
\def\oL{{{L^*}}}
\def\kk{{\mathcal K}}
\def\Ro{{\mathcal R}}
\def\ro{{\mathbf r}}
\def\rot#1{{\sigma}}
\def\opi#1{{\,\oplus_{{}_{#1}}}}
\def\ve{{\mathbf v}}
\def\grou#1#2{{\langle{#1},{#2}\rangle}}
\def\age{{\mathbf g}}
\newtheorem{theorem}{Theorem} %Defines \begin{theorem} to write "Theorem"
\newtheorem{theorem*}{Theorem} %Defines \begin{theorem} to write "Theorem"
\newtheorem{proposition}[theorem]{Proposition} %square bracket tells it to count props with theorems
\newtheorem{fact}[theorem]{Fact} %square bracket tells it to count props with theorems
\newtheorem{corollary}[theorem]{Corollary}
\newtheorem{lemma}[theorem]{Lemma}
\newtheorem{observation}[theorem]{Observation}
\newtheorem{claim}[theorem]{Claim}
\newtheorem{remark}[theorem]{Remark}
\newtheorem{question}[theorem]{Question}
\theoremstyle{definition}
\def\rank#1{{\text{\rm rank}}(#1)}
\begin{document}

%\linenumbers  %% This enables line numbers in the whole document.
%% This is to show the labels

%\usepackage[notref,notcite]{showkeys}

% *****************************************************************************

\title[{Positive links with arrangements of pseudocircles as shadows}]{{Positive links with arrangements of pseudocircles \\ as shadows}}

\author[Medina]{Carolina Medina}
\address{Unidad Acad\'emica de Matem\'aticas. Universidad Aut\'onoma de Zacatecas, Mexico.}
\email{carolitomedina@gmail.com}

\author[Ram\'\i rez]{Santino Ram\'\i rez}
\address{Instituto de F\'\i sica, Universidad Aut\'onoma de San Luis Potos\'{\i}, Mexico.} 
\email{santinormz@if.uaslp.mx}

\author[Ram\'\i rez-Alfons\'\i n]{Jorge L.~Ram\'\i rez-Alfons\'\i n}
\address{IMAG, Univ. Montpellier, CNSR, Montpellier, France.}
\email{\tt jorge.ramirez-alfonsin@umontpellier.fr}

\author[Salazar]{Gelasio Salazar}
\address{Instituto de F\'\i sica, Universidad Aut\'onoma de San Luis Potos\'{\i}, Mexico.} 
\email{\tt gsalazar@ifisica.uaslp.mx}

%\subjclass[2010]{Primary 52C30; Secondary 05C10, 52C40}

\date{\today}

\begin{abstract}
An arrangement of pseudocircles $\mathcal A$ is a collection of Jordan curves in the plane that pairwise intersect (transversally) at exactly two points. How many non-equivalent links have $\mathcal A$ as their shadow? Motivated by this question,  we study the number of non-equivalent positive oriented links that have an arrangement of pseudocircles as their shadow. We give sharp estimates on this number when $\mathcal A$ is one of the three unavoidable arrangements of pseudocircles.
\end{abstract}

\maketitle

\section{Introduction}\label{sec:intro}

\DeclareRobustCommand*{\ora}{\overrightarrow}

{Let $L$ be a link. A {\em link diagram} of $L$ is a regular projection of $L$ into $\mathbb{R}^2$ such that the projection of each component is smooth and at most two curves intersect at a point. At each crossing point of the link diagram the curve which goes over the other is specified. A {\em shadow} of a link diagram is {the plane graph obtained by ignoring the over/under passes, turning them into degree $4$ vertices.} We refer the reader to \cite{Colin04} for standard background on knot theory.}

Suppose that $S$ is a link shadow. In general, it is virtually {impossible} to tell exactly which link is being projected to $S$. {What can we say about the links that could be projected to $S$?}

Several variants of this general question have been investigated in the literature. For instance,~\cite{ptaniyama,hanaki2015,taniyamaknots,taniyamalinks,santino,takimura2018} revolve around the {following} question: {\em given a link $L$, which shadows are projections of $L$?}

The latter is closely related to the notion of {\em fertility}: which/how many knots can be obtained from a minimal shadow of a knot~\cite{fertilitycantarella,fertilityito,fertilityhanaki,hanaki2023}. 
Another related problem asks for the construction of ``small'' shadows that are the projection of all knots of a given crossing number~\cite{evenzohar}. {A connection between shadows and the well-known {\em unknotting number} of a link diagram has been treated in \cite{MedinaRamirezSalazar19}. We refer the reader to~\cite{huhtaniyama,medina1,hanaki2010,itotakimura} for further related problems.}

The concept of fertility mentioned above is closely related to what we call the {\em prolificity} of a link shadow $S$, which is simply the number of non-equivalent links that project to $S$. For instance, a shadow of a torus knot $T_{2,n}$ with exactly $n$ crossing points is rather poor in this regard, as its prolificity is only $\lfloor{n/2}\rfloor$~\cite{fertilitycantarella}.

Needless to say, estimating the prolificity of an arbitrary shadow $S$ with $n$ crossings is a daunting task. {A naive} general algorithm to calculate this number consists of generating all $2^n$ diagrams that project to $S$, and then to try to find out by some means how many non-equivalent links {(this is the hard part)} arise from these $2^n$ diagrams. 

{In this direction, we shall}  shed light into the problem by investigating particular families of shadows. In this paper we {focus our attention} on shadows that are arrangements of pseudocircles. We recall that an {\em arrangement of pseudocircles of size $n$} is a collection of $n$ Jordan curves in the plane that pairwise intersect at exactly two points, at which they cross. In Figure~\ref{fig:f30}(a) we illustrate an arrangement of pseudocircles of size $3$. If we assign an orientation to each pseudocircle, we obtain an {\em oriented arrangement of pseudocircles}. For instance, in Figure~\ref{fig:f30}(b) and (d) we illustrate oriented arrangements obtained from the arrangement in (a). For brevity, throughout this work we refer to an (oriented or not) arrangement of pseudocircles simply as an {\em arrangement}.

% *********************************************************
%\begin{figure}[ht!]
%\centering
%\scalebox{0.58}{\input{f30h.pdf_t}}
%\caption{In (a) we have an arrangement of three pseudocircles, and in (b) and (d) we have oriented arrangements obtained from (a). In (c) (respectively, (e)) we illustrate the positive link induced by the oriented arrangement in (b) (respectively, (d)).}
%\label{fig:f30}
%\end{figure}
% *********************************************************

% *********************************************************
\def\inca{{\Scale[1.5]{\text{\rm (a)}}}}
\def\incb{{\Scale[1.5]{\text{\rm (b)}}}}
\def\incc{{\Scale[1.5]{\text{\rm (c)}}}}
\def\incd{{\Scale[1.5]{\text{\rm (d)}}}}
\def\ince{{\Scale[1.5]{\text{\rm (e)}}}}

\begin{figure}[ht!]
\centering
\scalebox{0.5}{\input{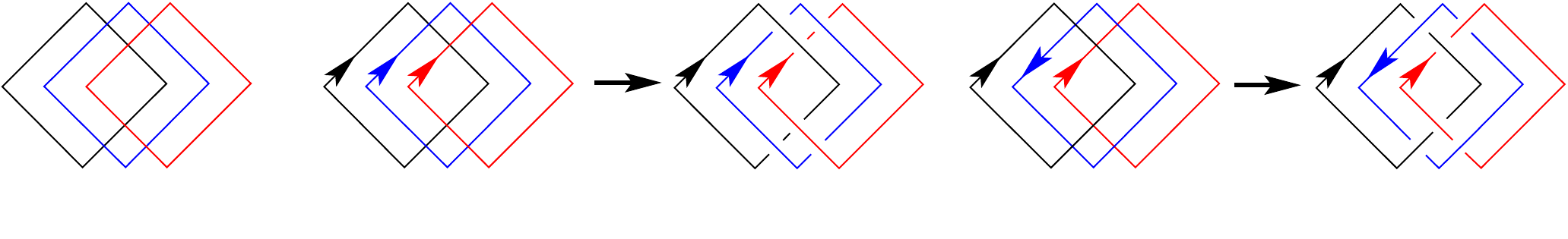_t}}
\caption{In (a) we have an arrangement of three pseudocircles, and in (b) and (d) we have oriented arrangements obtained from (a). In (c) (respectively, (e)) we illustrate the positive link induced by the oriented arrangement in (b) (respectively, (d)).}
\label{fig:f30}
\end{figure}
% *********************************************************

\subsection{The main question}
Estimating the prolificity of an arbitrary arrangement $\aa$ seems to be far from reach. {We shall thus concentrate} our efforts on estimating the number of non-equivalent {\em positive} links that project to $\aa$. We investigate the following.

\begin{question}\label{que:main}
Let $\aa$ be an unoriented arrangement of pseudocircles. How many non-equivalent positive links project to $\aa$?
\end{question}

{Throughout this paper we investigate Question \ref{que:main} for {\em oriented} links {(as we explain in Section~\ref{sec:concludingremarks}, our results for oriented links imply corresponding results for unoriented links)}.  A link is {\em oriented} if an orientation for each of its connected components is fixed.
Two oriented links $L$ and $M$ are {\em equivalent} if there is an ambient isotopy that takes $L$ to $M$, preserving the orientation of each component. We use $L\sim M$ to denote that $L$ and $M$ are equivalent oriented links.}

\vglue 0.3 cm
\noindent{\bf Remark. }{\sl With the exception of Section~\ref{sec:concludingremarks}, throughout this paper all links under consideration are implicitly assumed to be oriented.}
\vglue 0.3 cm

{As illustrated in Figure \ref{fig:f30}}, each oriented arrangement naturally induces a positive link. We recall that each crossing in a link diagram is either {\em positive} or {\em negative}, according to the convention illustrated in Figure~\ref{fig:fig2}, and a link is {\em positive} if all its crossings are positive. Thus in order to obtain a positive link from an oriented arrangement, one gives to each crossing in the arrangement the over/under assignment that yields a positive crossing. In Figure~\ref{fig:f30}(c) (respectively, (e)) we illustrate the positive link induced by the oriented arrangement (b) (respectively, (d)).

% *********************************************************
\begin{figure}[ht!]
\begin{center}
\scalebox{0.5}{\input{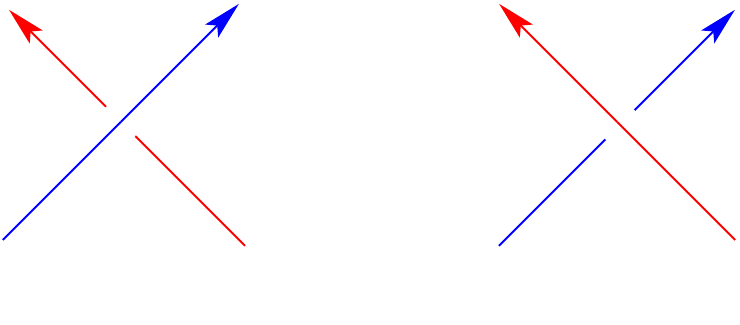_t}}
\end{center}
\caption{Each crossing in a link diagram is either positive or negative, according to this convention.}
\label{fig:fig2}
\end{figure}
% *********************************************************

We {denote by} $\linksor{\aa}$ the collection of all positive oriented links that project to $\aa$. Thus if $\aa$ is an unoriented arrangement of $n$ pseudocircles, each of the $2^n$ ways to orient the $n$ pseudocircles in $\aa$ induces an oriented arrangement, which in turn naturally induces a link in $\linksor{\aa}$. Conversely, it is easy to see that each  link $L$ in $\linksor{\aa}$ naturally induces an oriented arrangement: the orientations of the components of $L$ naturally yield orientations of the pseudocircles in $\aa$. 

There is thus a one-to-one correspondence between the  links in $\linksor{\aa}$ and the $2^n$ distinct ways to orient the $n$ pseudocircles in $\aa$, that is, the $2^n$ distinct oriented arrangements that have $\aa$ as its underlying unoriented arrangement. Therefore $|\linksor{\aa}|=2^n$. We let $\numbor{\aa}$ denote the number of non-equivalent links in $\linksor{\aa}$ (that is, the number of equivalence classes in $\linksor{\aa}$).

% *********************************************************
%\begin{figure}[ht!]
%\centering
%\scalebox{0.52}{\input{f90c.pdf_t}}
%\caption{\jorge{\bf Caption a cambiar de acuerdo a la nueva figura con el ejemplo de Ring de tamaño 3 en donde hay 2 clases de equivalencias}The $2^3=8$ positive links induced by the arrangement $\aa$ in Figure~\ref{fig:f30} are partitioned into two equivalence (ambient isotopic) classes. Indeed, it is easy to check that the six links at the top are equivalent to each other, that the two links at the bottom are equivalent, and that no link in the first collection is equivalent to a link in the second collection. Therefore $\numbor{\aa}=2$.}
%\label{fig:f90}
%\end{figure}
% *********************************************************

For instance, in Figure~\ref{fig:f90} we illustrate the $2^3=8$ positive links induced by the arrangement $\aa$ in Figure~\ref{fig:f30}. As we also illustrate in that figure, it is easy to verify that the six links on the left-hand side of that figure are equivalent to each other, and the two links on the right-hand side are equivalent to each other (but not to the other six). Hence the collection of $8$ links in $\linksor{\aa}$ is partitioned into two equivalence classes. Thus in this particular case $|\linksor{\aa}|=8$ and $\numb{\aa}=2$. 

% *********************************************************
\begin{figure}[ht!]
\centering
\scalebox{0.47}{\input{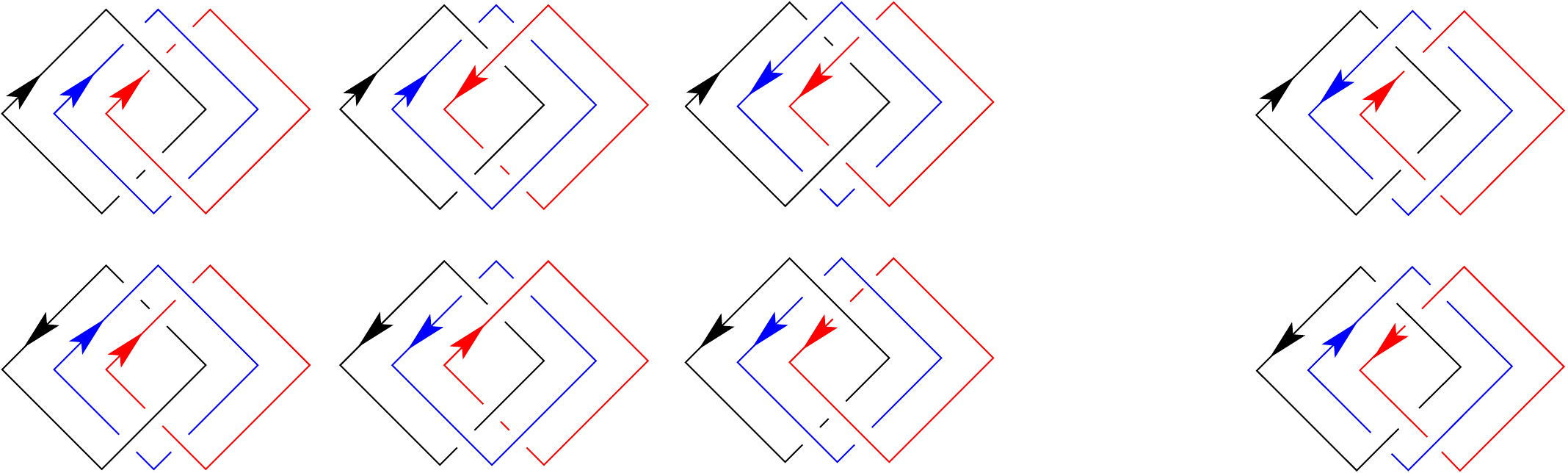_t}}
\caption{The $2^3=8$ positive links induced by the arrangement $\mathscr{A}$ in Figure~\ref{fig:f30} are partitioned into two equivalence (ambient isotopic) classes. Indeed, it is easy to check that the six links on the left-hand side are equivalent to each other, that the two links on the right-hand side are equivalent, and that no link in the first collection is equivalent to a link in the second collection. Therefore $\llbracket{\protect\overrightarrow{\mathcal{L}}^+(\mathscr{A})}\rrbracket=2$.}
\label{fig:f90}
\end{figure}
% *********************************************************

{We focus our attention on the oriented version of}  Question~\ref{que:main}: 

\begin{question}[Oriented version of Question~\ref{que:main}]\label{que:main2}
Let $\aa$ be an unoriented arrangement of pseudocircles. How many non-equivalent positive oriented links project to $\aa$? Using our notation: how large is $\numbor{\aa}$?
\end{question}

%%% The crossings in Figure fig:f90 should be checked carefully.

\subsection{Our main results}

Needless to say, the answer to Question~\ref{que:main2} depends on the arrangement $\aa$ under consideration. We investigate this question for three important families of arrangements, namely the {\em unavoidable} arrangements: the ring arrangement, the boot arrangement and the flower arrangement. Let us quickly recall these families.

In Figure~\ref{fig:fig270} we illustrate three arrangements of size $6$: the {\em ring} arrangement $\rr_6$, the {\em boot} arrangement $\bb_6$, and the {\em flower} arrangement $\ff_6$. It is straightforward to generalize these arrangements to obtain arrangements $\rr_n,\bb_n$, and $\ff_n$, for any positive integer $n$. Note that $\rr_1{=}\bb_1{=}\ff_1$ and $\rr_2{=}\bb_2{=}\ff_2$, as up to isomorphism there is evidently only one arrangement of size $1$ and only one arrangement of size $2$.

% *********************************************************
\begin{figure}[ht!]
\centering
\scalebox{0.7}{\input{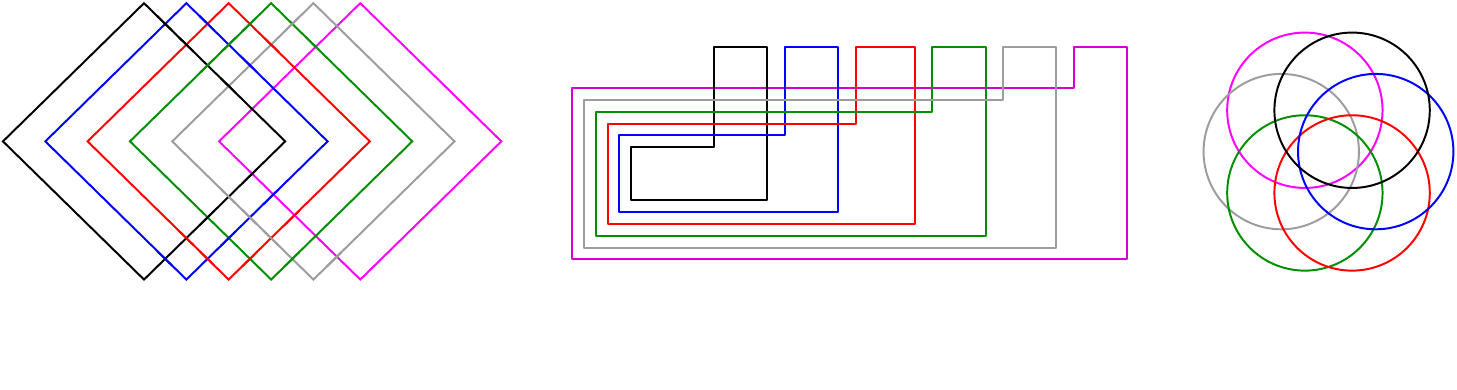_t}}
\caption{From left to right, the {\em ring} arrangement $\rr_6$, the {\em boot} arrangement $\bb_6$, and the {\em flower} arrangement $\ff_6$. These are the unavoidable arrangements of size $6$.}
\label{fig:fig270}
\end{figure}
% *********************************************************

It was proved in~\cite{pams} that for each integer $n\ge 1$, $\rr_n,\bb_n$, and $\ff_n$ are the three {\em unavoidable} arrangements of pseudocircles, in the following sense:

\begin{theorem}[{\cite[Theorem 2]{pams}}]
For each fixed $n \ge 1$, every sufficiently large arrangement of pseudocircles has a subarrangement isomorphic to $\rr_n,\bb_n$, or $\ff_n$.
\end{theorem}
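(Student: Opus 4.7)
The plan is to use Ramsey-type reasoning on the isomorphism types of small sub-arrangements. Since any two pseudocircles in an arrangement meet in exactly two transversal crossings, a pair carries no discriminating combinatorial information, so the first meaningful invariant is attached to triples.

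I would first show that there are only finitely many combinatorial isomorphism types of arrangements of $3$ pseudocircles; call this finite set $\mathcal{T}$. A $3$-pseudocircle arrangement has exactly $6$ crossing vertices, and, since each pseudocircle is cut into $4$ arcs by the other two, exactly $12$ edges, which by Euler's formula gives $8$ faces. The combinatorial data one can place on such a bounded cell complex is finite, so $|\mathcal{T}|$ is finite and can in principle be enumerated by a small case check.

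Given any arrangement $\aa$ of $N$ pseudocircles, color each $3$-element subset of the pseudocircles by the type in $\mathcal{T}$ of the sub-arrangement it induces. Ramsey's theorem for $3$-uniform hypergraphs on $|\mathcal{T}|$ colors provides a function $N = N(n,|\mathcal{T}|)$ such that for every $N \ge N(n,|\mathcal{T}|)$ there exists a sub-collection of $n$ pseudocircles in $\aa$ all of whose triples share a common type $t \in \mathcal{T}$. This is the Ramsey engine of the proof, and its output is a large ``triple-homogeneous'' sub-arrangement.

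The heart of the proof is the classification step: for each type $t \in \mathcal{T}$, determine which arrangements on $n$ pseudocircles have all triples of type $t$. I would carry this out by fixing one pseudocircle $C$ in the homogeneous sub-collection and analyzing the arcs that the remaining $n-1$ pseudocircles cut off on (and off of) $C$; triple-homogeneity forces strong constraints on how these arcs are arranged cyclically on $C$, and I expect to isolate exactly three ``extendible'' local patterns, corresponding to $\rr_n$, $\bb_n$ and $\ff_n$. Any other type $t$ should obstruct extension beyond some fixed size $n_t$, so that by enlarging $N(n)$ these spurious types can be absorbed. The main obstacle is exactly this local-to-global analysis: ruling out every alternative homogeneous pattern will likely require a careful topological argument and may need a second iteration of Ramsey (on $4$-tuples) to eliminate configurations that are $3$-homogeneous but not $4$-homogeneous, before the three unavoidable families can be cleanly identified.
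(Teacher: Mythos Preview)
This statement is not proved in the paper; it is quoted as \cite[Theorem~2]{pams} and used only as background motivation for why $\rr_n,\bb_n,\ff_n$ are the natural families to study. There is therefore no ``paper's own proof'' to compare your attempt against.

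That said, your Ramsey-on-triples plan is exactly the right \emph{framework} for results of this flavour, and it is the approach one expects the cited paper to take. What you have written, however, is a strategy rather than a proof. The substantive content of the theorem lives entirely in the step you label ``the heart of the proof'': showing that a large arrangement in which all $3$-element sub-arrangements share a common isomorphism type must be one of $\rr_n$, $\bb_n$, or $\ff_n$. You gesture at analyzing arcs along a fixed pseudocircle $C$ and ``expect to isolate exactly three extendible local patterns,'' but this is precisely where all the work is, and you have not done any of it. In particular, you would need to (i) actually list the isomorphism types in $\mathcal{T}$, (ii) for each type show either that a triple-homogeneous arrangement of that type is forced to be one of the three families, or that no such arrangement exists past some bounded size, and (iii) justify why the three families really are pairwise distinguished by their triple types (so that the case analysis terminates with exactly these three outcomes). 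Your closing remark that a second Ramsey pass on $4$-tuples ``may'' be needed is an admission that you have not yet determined whether triple-homogeneity suffices; until that is settled, the argument does not close.
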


These three families of arrangements $\rr_n,\bb_n$ and $\ff_n$ are then the {\em unavoidable} families, in the sense that these are the (only) arrangements that are guaranteed to exist as ``large'' subarrangements of an arbitrary arrangement. {The latter is the analogue to the seminal Erd{\H{o}}s-Szekeres  Theorem \cite{ErdosSzkeres35} asserting that  for each fixed $m\ge 1$, every sufficiently large simple arrangement of pseudolines in $\mathbb{R}\mathbb{P}^2$ has a {\em cyclic} subarrangement of size $m$.}

We investigate Question~\ref{que:main2}  for the three unavoidable families of arrangements. Not surprisingly, determining the exact values of $\numbor{\rr_n},\numbor{\bb_n}$, and $\numbor{\ff_n}$ for small values of $n$ is a cumbersome task that relies on a case analysis, and so it makes more sense to investigate these numbers for large values of $n$, that is, the asymptotic behaviour of these numbers. More specifically, we are interested in the growth of $\numbor{\rr_n},\numbor{\bb_n}$, and $\numbor{\ff_n}$ relative to $2^n$, which is the total number of links in $\linksor{\rr_n},\linksor{\bb_n}$, and $\linksor{\ff_n}$.

Our main {contributions} are exact asymptotic estimates for $\numbor{\rr_n}, \numbor{\bb_n}$, and $\numbor{\ff_n}$. For the reader not familiar with this notation, we recall that $o(n)$ stands for a function that goes to $0$ as $n$ goes to infinity.

\begin{theorem}[The number of positive oriented links that project to $\rr_n$]\label{thm:arrr}
\[
\numbor{\rr_n} = \biggl(\frac{1}{4} + o(n)\biggr) \cdot 2^n.
\]
\end{theorem}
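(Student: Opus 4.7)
The plan is to identify a complete invariant of equivalence classes in $\linksor{\rr_n}$ and then count its values. Label the pseudocircles of $\rr_n$ as $C_1,\ldots,C_n$ along the chain, so that the intersecting pairs are precisely $\{C_i,C_{i+1}\}$ for $i=1,\ldots,n-1$. Each orientation of $\rr_n$ corresponds to a tuple $o=(o_1,\ldots,o_n)\in\{\pm1\}^n$. Associate to $o$ its \emph{sign sequence} $\epsilon(o):=(o_1o_2,\,o_2o_3,\,\ldots,\,o_{n-1}o_n)\in\{\pm1\}^{n-1}$.

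The first (and main) step is to show that $\epsilon(o)$, taken modulo reversal of the sequence, is an ambient-isotopy invariant of the positive oriented link determined by $o$. A direct crossing-sign computation (essentially the case $n=2$) shows that, in this positive link, the two-component sub-link formed by each adjacent pair $C_i\cup C_{i+1}$ is an unoriented Hopf link whose chirality equals $\epsilon_i$: right-handed when $o_i=o_{i+1}$, and left-handed otherwise. The adjacency structure of the chain is recoverable from the link itself via its pairwise linking numbers (which are nonzero precisely for adjacent pairs), and the chirality of each such two-component sub-link is a classical isotopy invariant. Hence the sequence $\epsilon$ is well-defined on equivalence classes modulo the one remaining ambiguity, namely the relabeling interchanging the two endpoints of the chain, i.e.\ reversal.

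Next, the identifications among orientations are realized by explicit ambient isotopies of $\mathbb{R}^3$: a $180^{\circ}$ rotation about a horizontal axis through the chain of pseudocircles realizes the global orientation reversal $o\mapsto -o$, and a $180^{\circ}$ rotation about the vertical axis through the midpoint of the chain realizes the chain reversal $o\mapsto(o_n,\ldots,o_1)$. Together these generate a Klein four-group whose orbits on $\{\pm 1\}^n$ consist exactly of those $o,o'$ with $\epsilon(o)$ equal to $\epsilon(o')$ or its reverse. Combined with the previous step, this yields a bijection between equivalence classes in $\linksor{\rr_n}$ and equivalence classes of sign sequences under reversal.

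Finally I count. There are $2^{n-1}$ sign sequences, of which $2^{\lceil(n-1)/2\rceil}$ are palindromes (fixed by reversal); a Burnside count gives
\[
\numbor{\rr_n}=\tfrac{1}{2}\bigl(2^{n-1}+2^{\lceil(n-1)/2\rceil}\bigr)=2^{n-2}+2^{\lceil(n-1)/2\rceil-1}=\bigl(\tfrac{1}{4}+o(1)\bigr)\cdot 2^n.
\]
The hard part is the invariance claim above: one must ensure that the chirality of each adjacent Hopf sub-link in the \emph{full} link is unaffected by the other components. This uses crucially that $\rr_n$ has no crossings between non-adjacent pseudocircles, so that $C_i\cup C_{i+1}$ is a genuine $\rr_2$-type sub-link governed only by the local data $(o_i,o_{i+1})$.
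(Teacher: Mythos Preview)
Your invariance claim is wrong at its core. By construction, every link in $\linksor{\rr_n}$ has \emph{all} crossings positive: the over/under assignment at each crossing is chosen, depending on the orientations, precisely to make the crossing positive. Hence the oriented sublink $C_i\cup C_{i+1}$ always has linking number $+1$, and as an unoriented link it is always the \emph{same} (right-handed) Hopf link, regardless of $(o_i,o_{i+1})$. Your $\epsilon_i$ cannot be read off from the link; what changes when you flip $o_{i+1}$ is not the chirality of this sublink but the over/under pattern, and the two effects cancel exactly. A concrete sanity check already fails at $n=3$: the paper verifies directly that $\numbor{\rr_3}=2$ (the eight links fall into two isotopy classes), whereas your formula gives $2^{1}+2^{0}=3$. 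So the ``only if'' direction---that $R(o)\sim R(o')$ forces $\epsilon(o)$ and $\epsilon(o')$ to agree up to reversal---is simply not established, and in fact is false without a rank hypothesis.

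The conclusion you are aiming for is exactly the paper's Proposition~6: if $\rank{a}\ge 4$ and $R(a)\sim R(b)$, then $b\in\{a,\Ov{a},\In{a},\IO{a}\}$. But the paper's proof of this is substantially harder than a linking-number argument: it goes through Lemmas~7 and~8, controlling how any isotopy must permute the components of a maximal oscillating sublink. The base case (Claim~11) relies on computing the intrinsic symmetry group of the hyperbolic link $R(0101)$ via {\tt SnapPy}, together with a Jones polynomial check to rule out non-oscillating $b$'s. Your rotation isotopies $\Ho$ and $\Ve$ do give the easy ``if'' direction (this is Observation~5), but there is no shortcut via pairwise sublinks for the converse.
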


\begin{theorem}[The number of positive oriented links that project to $\bb_n$]\label{thm:arrs}
\[
\numbor{\bb_n} = \biggl({1} + o(n)\biggr) \cdot 2^n.
\]
\end{theorem}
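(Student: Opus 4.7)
The plan is to establish that the assignment $\epsilon\mapsto L(\epsilon)$ from $\{+,-\}^n$ to positive oriented links projecting to $\bb_n$ is essentially injective, with at most $o(2^n)$ collisions in the quotient by $\sim$. Since $|\linksor{\bb_n}|=2^n$, this would immediately yield $\numbor{\bb_n}=(1-o(1))\cdot 2^n$, matching the claimed asymptotic.

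The starting point is the structural rigidity of $\bb_n$. Unlike the ring $\rr_n$, whose substantial cyclic and reflective symmetries (combined with global orientation reversal) are what produce the factor $1/4$ of Theorem~\ref{thm:arrr}, the boot has an essentially trivial combinatorial automorphism group: its ``heel'' and ``toe'' destroy every nontrivial rotational or reflective symmetry. I would first extract from $\bb_n$ a canonical labeling of its pseudocircles $C_1,\dots,C_n$ that can be recovered from any positive link projecting to $\bb_n$---for instance by locating a small ``anchor'' sub-arrangement whose components are distinguishable from all others by invariants such as the number of crossings they participate in and their pairwise linking patterns with neighbors. With such a labeling in place, I would then read each orientation bit $\epsilon_i$ off $L(\epsilon)$ using oriented invariants of the sublinks spanned by small subsets of components: signed linking numbers, writhe contributions, and, if necessary, Milnor-type $\bar\mu$-invariants. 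Since $L(\epsilon)$ is positive, the diagram carried by $\bb_n$ already realizes a minimal-genus Seifert surface via Seifert's algorithm, which provides a rigid framework tying the diagrammatic data (over/under pattern, hence $\epsilon$) to ambient-isotopy invariants of the link.

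The main obstacle is to rule out ``hidden'' equivalences $L(\epsilon)\sim L(\epsilon')$ with $\epsilon\ne\epsilon'$ that pass through non-positive intermediate diagrams via Reidemeister sequences, and then to bound the number of such bad pairs by $o(2^n)$. I expect the hard part to split into two pieces: first, a delicate case analysis on the peripheral pseudocircles of $\bb_n$ (where a single orientation flip produces only a local change in the diagram), showing that the invariants above still suffice to separate the resulting links; and second, a combinatorial counting argument showing that any residual ``accidental'' symmetries of the induced links are sparse enough to be absorbed into the $o(2^n)$ error term. These two points---local sensitivity of invariants near the leaves of the boot and a global count of exceptional orientations---are, in my view, the crux of the proof.
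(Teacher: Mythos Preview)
Your plan has a concrete gap at its very first step: the invariants you propose for recovering a canonical labeling of the components, and for reading off the orientation bits, are all trivial here. In any arrangement of pseudocircles every pair of curves meets in exactly two points, so in every positive link $B(a)$ each component crosses every other component exactly twice, both crossings positive; hence every pairwise linking number equals $+1$, regardless of $a$, and every component participates in exactly $2(n-1)$ crossings. Thus ``number of crossings a component participates in'' and ``pairwise linking numbers/linking patterns with neighbors'' give no information whatsoever, neither to single out an anchor component nor to separate $B(a)$ from $B(b)$. The writhe is similarly useless (it is $n(n-1)$ for every $B(a)$), and the Seifert-surface remark does not produce an invariant that sees $a$. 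You gesture at Milnor invariants, but give no indication of which ones or why they would detect a single bit flip; this is precisely the hard part, not a side issue.

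There is also a conceptual slip: the triviality of the \emph{combinatorial} automorphism group of $\bb_n$ says nothing a priori about ambient isotopies between $B(a)$ and $B(b)$, which are under no obligation to respect the projection. Bridging that gap is the entire content of the theorem.

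For comparison, the paper's proof does not attempt to cook up local invariants. It reduces the theorem to the statement that if $\rank{a}\ge 6$ and $B(a)\sim B(b)$ then $b=a$ (Proposition~\ref{pro:arrs}). The engine is an analysis of how an isotopy must act on an \emph{oscillating} sublink of size $r$: one shows (Lemma~\ref{lem:oscboots2}) that the induced permutation on such a sublink is forced to be the identity and that the corresponding subwords agree. The base case is $n=6$: the two oscillating boot links $B(010101)$ and $B(101010)$ are hyperbolic, their intrinsic symmetry groups are computed with {\tt SnapPy} and found to be trivial (Fact~\ref{fac:snappyboot}), and a Jones-polynomial check shows each is inequivalent to every other boot link of size $6$. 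An induction on $n$, using that sublinks of boot links are strongly isotopic to smaller boot links, then propagates this rigidity to all $n$. Nothing in your outline approaches this mechanism.
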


\begin{theorem}[The number of positive oriented links that project to $\ff_n$]\label{thm:arrf}
\[
\numbor{\ff_n} = \biggl(\frac{1}{{2n}} + o(n)\biggr) \cdot 2^n.
\]
\end{theorem}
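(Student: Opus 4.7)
The plan is to prove the estimate by combining an orbit-count argument with a link-invariant argument. The flower arrangement $\ff_n$ admits a natural group of planar symmetries $G \cong D_n$ of order $2n$: the rotation of $\mathbb{R}^2$ by $2\pi/n$ about the center of the flower (which permutes the $n$ pseudocircles cyclically) together with an involution that I would construct either from a planar reflection of $\ff_n$ coupled with the $180^\circ$ rotation of $\mathbb{R}^3$ about an axis in the plane (an orientation-preserving operation that simultaneously flips over/unders at every crossing), or from the global involution $\sigma\mapsto -\sigma$ combined with a mirror symmetry. The first step is to verify that if $\sigma,\tau\in\{+,-\}^n$ lie in the same $G$-orbit then the corresponding positive oriented links $L_\sigma,L_\tau$ are equivalent. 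The rotation is immediate, being an orientation-preserving homeomorphism of $S^3$ that carries $L_\sigma$ to $L_{\rho\sigma}$ where $\rho$ is the cyclic shift; the second generator requires checking that the chosen combination lifts to an ambient isotopy.

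Granting these equivalences, Burnside's lemma yields
\[
\#\{G\text{-orbits in }\{+,-\}^n\} \;=\; \frac{1}{2n}\sum_{g\in G}|\mathrm{Fix}(g)| \;=\; \frac{2^n}{2n}\bigl(1+o(1)\bigr),
\]
because the identity contributes $2^n$, every non-trivial rotation contributes at most $2^{n/2}$ (since fixed sequences must have period dividing a proper divisor of $n$), and each of the $n$ reflections contributes at most $2^{\lceil n/2\rceil+1}$ fixed orientations. This gives the upper bound $\numbor{\ff_n} \le (\tfrac{1}{2n}+o(1))\cdot 2^n$.

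For the matching lower bound I must show that distinct $G$-orbits produce non-equivalent positive links, up to a negligible error. Linking numbers alone cannot do this: for every $\sigma$, each pair of components of $L_\sigma$ crosses exactly twice and both crossings are positive by construction, so every pairwise linking number equals $+1$ independently of $\sigma$. A more refined invariant is therefore required. My plan is to associate to each $\sigma$ an explicit positive-braid presentation of $L_\sigma$ and extract a polynomial invariant (HOMFLY, or a multi-variable Alexander specialisation) whose coefficients encode the cyclic sign pattern of $\sigma$ up to the $G$-action. A natural point of departure is the observation that the ``all-$+$'' positive link obtained from $\ff_n$ is the torus link $T(n,n)$; a general $L_\sigma$ arises from $T(n,n)$ by component-wise orientation reversals together with the induced over/under adjustments, and the distinguishing problem should reduce to a combinatorial comparison of cyclic sign patterns that is invariant exactly under $G$. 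It suffices to treat orientations with trivial $G$-stabiliser, since the number of $\sigma$ with non-trivial stabiliser is at most $O(n\cdot 2^{n/2})$ and hence contributes only to the error term.

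The main obstacle is precisely this last step: exhibiting an invariant sharp enough to separate the $\Theta(2^n/n)$ candidate equivalence classes. Constructing such an invariant, or equivalently proving a normal-form/braid-rewriting theorem for the positive-braid closures representing the $L_\sigma$, is where the bulk of the technical work lies; by comparison the Burnside computation and the verification of the symmetries are relatively routine.
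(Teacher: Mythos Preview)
Your upper-bound argument is essentially the paper's: the dihedral group $D_n$ generated by the rotation $\Ro_{2\pi/n}$ and the $180^\circ$ rotation $\Ve$ about an in-plane axis acts on the $2^n$ orientation words, and Burnside (together with the easy observation that almost every word has trivial stabiliser) gives $(1/2n+o(1))\cdot 2^n$ orbits. One caution: your description of the involution is imprecise. The correct generator is simply the $180^\circ$ rotation of $\mathbb{R}^3$ about a vertical axis lying in the plane of the diagram; on words it acts as $a\mapsto(\overline a)^{-1}$ (reverse \emph{and} negate). Your second proposed option, ``$\sigma\mapsto-\sigma$ combined with a mirror symmetry'', does not work: mirroring destroys positivity, and negation alone (without reversal) is \emph{not} generally an equivalence of oriented flower links---indeed the paper notes that $F(a)$ is non-invertible with high probability.

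The genuine gap is the lower bound. You correctly identify that linking numbers are useless and propose to distinguish orbits via HOMFLY or multivariable Alexander, but you do not carry this out, and there is no evidence that a polynomial invariant separates all $\Theta(2^n/n)$ classes. The paper's route is entirely different and does not rely on computing polynomial invariants for general $n$. The key structural idea is the notion of \emph{rank} of a word (the length of a longest alternating subword) and the corresponding \emph{oscillating sublinks}. Because of the special geometry of $\ff_n$, any sublink of $F(a)$ indexed by a subword is strongly isotopic to the flower link on that subword; in particular an oscillating sublink of size $r$ is isotopic to $F(0101\cdots)$ of length $r$. The paper then proves, by induction on $r$, that any isotopy between oscillating flower links of even size $r\ge 6$ must induce a permutation in the dihedral group and must send the word to either itself or its negation according to parity. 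The base case $r=6$ is settled by a finite check: {\tt SnapPy} computes that the intrinsic symmetry group of the hyperbolic link $F(010101)$ is exactly $D_6$, and a Jones-polynomial computation confirms that no non-oscillating word of length $6$ gives an equivalent link. From this base, the inductive step leverages two overlapping oscillating sublinks of size $r$ inside one of size $r+2$ to force the permutation on the larger link to lie in $D_{r+2}$. Finally, a ``canonical decomposition'' argument transfers this rigidity from oscillating sublinks to arbitrary words of rank $\ge 6$: any $F(a)\sim F(b)$ isotopy must carry each maximal monotone block of $a$ to a block of $b$ in a way governed by an element of $D_r$, forcing $b$ to lie in the $\langle\ro,\ve\rangle$-orbit of $a$. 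This is what replaces your hoped-for polynomial computation.
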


%%% Remember the Arf invariant: it relates to PROPORTIONS of distinct isotopy classes, if I understand it correctly.

At a high level, to prove Theorem~\ref{thm:arrr} we show that if we take a random {oriented} link in $\linksor{\rr_n}$ then with high probability its equivalence class in $\linksor{\rr_n}$ has size exactly four. Similarly, to prove Theorem~\ref{thm:arrs} (respectively, Theorem~\ref{thm:arrf}) we show that if we take a random {oriented} link in $\linksor{\bb_n}$ (respectively, in $\linksor{\ff_n}$) then with high probability its equivalence class in $\linksor{\bb_n}$ (respectively, $\linksor{\ff_n}$) has size exactly one (respectively, ${2n}$).

%\jorge{Let $L_n^c$ be the number of non-equivalent links with $n$ components and $c$ crossings.

%A consequence of Theorem \ref{thm:arrr} is the following

%\begin{corollary}\label{cor:numb} Let $n\ge 3$ be an integer. Then,
%\[
%L_n^{\scriptsize{n\choose 2}}\ge 2^{n-2}.
%\]
%\end{corollary}

The paper is organized as follows. In next section we briefly discuss some needed background.

In Section~\ref{sec:proofthmarrr}, we set the strategy to prove Theorem~\ref{thm:arrr} concerning ring links. We reduce Theorem~\ref{thm:arrr} to Proposition~\ref{pro:arrr}. In Section~\ref{sec:proofproparrr} Proposition~\ref{pro:arrr} is in turn reduced to Lemmas~\ref{lem:workhorsering} and \ref{lem:oscrings2}. Lemma~\ref{lem:workhorsering} is proved in Section~\ref{sec:proofworkhorsering} while Lemma~\ref{lem:oscrings2} is treated in Section~\ref{sec:smallringlinks} (for small values by using intrinsic symmetry groups) and in Section~\ref{sec:proofoscrings2} for arbitrary values.

In Section~\ref{sec:proofarrs}, we set the strategy to prove Theorem~\ref{thm:arrs} concerning boot links. We reduce Theorem~\ref{thm:arrs} to Proposition~\ref{pro:arrs}. Similarly as for ring links, this proposition is in turn reduced in Section~\ref{sub:proofboots} to Lemmas~\ref{lem:workhorseboot} and \ref{lem:oscboots2}. Lemma~\ref{lem:workhorseboot} is also proved in Section ~\ref{sub:proofboots}, while Lemma~\ref{lem:oscboots2} is proved in Section~\ref{sec:lemboots}.

In Section~\ref{sec:proofarrff} we set the strategy to prove Theorem~\ref{thm:arrf} concerning flower links. This family is trickier, and it needs a few further notions and arguments. The overall strategy is analogous to the one used for ring links and boot links: the theorem is reduced to Proposition~\ref{pro:arrf}, which is in turn reduced to Lemmas~\ref{lem:workhorseflower} and \ref{lem:oscflowers2} in Section~\ref{sec:reduceflowerproposition}. Lemma~\ref{lem:workhorseflower} is also proved in Section~\ref{sec:reduceflowerproposition} while Lemma~\ref{lem:oscflowers2} is treated in Section~\ref{sec:oscillatingflower} (for small values by using intrinsic symmetry groups) and in Section~\ref{sec:proofoscflowers2} for general values.

Finally, in Section~\ref{sec:concludingremarks} we discuss the corresponding versions of Theorems~\ref{thm:arrr},~\ref{thm:arrs}, and~\ref{thm:arrf} for unoriented links.

% ******************************************************************************************************************
% ******************************************************************************************************************
% ******************************************************************************************************************

% -----------------------------------------------------------------------------------

\section{Basic notation and terminology on isotopies} 
We close this introductory section with some remarks on the notation and terminology that we use throughout this paper. %\jorge{From now on, we shall only consider oriented links, but refer it as a link, for short (unless otherwise stated).} 

For brevity, we will refer to an ambient isotopy simply as an {\em isotopy}. We use $\Isot{\ii}{L}{M}$ to denote that $\ii$ is an isotopy that takes a link $L$ to a link $M$, and say that $\ii$ is an {\em $\isot{L}{M}$ isotopy}.

Many of our arguments will involve permutations induced by isotopies. Before we explain this in more detail, let us lay out the terminology we shall use for permutations. {As usual, the set $\{1,\dots , n\}$ will be denoted by $[n]$}. We use $\identity$ to denote the identity permutation on a set $[n]$, for some positive integer $n$. We use $\reverse$ to denote the {\em reverse} permutation on a set $[n]$, that is, $\reverse(i)=n-i+1$ for $i=1,\ldots,n$. We shall use the one-line notation for permutations. That is, we use $(\pi(1)\, \pi(2)\, \cdots\, \pi(n) )$ to denote the permutation 
$\left(\begin{smallmatrix} 
1 & 2 & \cdots & n\\
\pi(1) & \pi(2) & \cdots & \pi(n) 
\end{smallmatrix}\right)$.
Thus, for instance, the identity permutation $\identity$ on $[n]$ is $(1\,2\,\cdots n)$, and the reverse permutation $\reverse$ on $[n]$ is $(n\, \cdots 2\, 1)$.

As we shall see, the components of every link under consideration will be naturally ordered. Suppose that the components of a link $L$ (respectively, $M$) have a natural order $L_1,\ldots,L_n$ (respectively, $M_1,\ldots,M_n$). Then each $\isot{L}{M}$ isotopy $\ii$ naturally induces a permutation $\pi$ of $[n]=\{1,2,\ldots,n\}$, where $\pi(i)$ is the integer such that $\Isot{\ii}{L_i}{M_{\pi(i)}}$. We say that $\pi$ {\em is the $({L},{M})$-permutation under} $\ii$. We also say that $\ii$ is an $\isop{L}{M}{\pi}$ {\em isotopy}, and we write $\Isop{\ii}{L}{M}{\pi}$. %to mean ``$\ii$ is an isotopy that takes $L$ to $M$, and the $(L,M)$-permutation under $\ii$ is $\pi$''

In the particularly important case in which $\pi$ is the identity permutation $\identity$ (that is, $\ii$ takes the $i$-th component $L_i$ of $L$ to the $i$-th component $M_i$ of $M$, for $i=1,\ldots,n$), we say that $\ii$ is a {\em strong} $\isot{L}{M}$ {\em isotopy}. 

We finish the section with an elementary fact that will be used frequently in our discussions. 

\begin{observation}\label{obs:comprings} {Let $L,M$ and $N$ be three links and let $\Isot{\ii}{L}{M}$ and $\Isot{\jj}{M}{N}$ be two isotopies. Let} $\pi,\tau$ be permutations such that $\Isop{\ii}{L}{M}{\pi}$ and $\Isop{\jj}{M}{N}{\tau}$. Then $\jj\circ \ii$ is an isotopy such that $\Isop{\jj\circ\ii}{L}{N}{\tau\,\circ\,\pi}$. 
\end{observation}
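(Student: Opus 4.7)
The plan is to separate the statement into two independent tasks: first, verify that the composition $\jj \circ \ii$ is genuinely an ambient isotopy from $L$ to $N$; second, verify that its induced $(L,N)$-permutation is $\tau \circ \pi$. Both are essentially bookkeeping, and I do not foresee any real obstacle.

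For the first task I would invoke the standard fact from ambient-isotopy theory that the concatenation of two ambient isotopies, after a trivial reparametrization of the time parameter, is again an ambient isotopy. Applied to $\ii$ (which ends at $M$) followed by $\jj$ (which starts at $M$), this yields an ambient isotopy from $L$ to $N$, which is what we denote by $\jj \circ \ii$.

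For the second task I would chase one component at a time. By the definition of $\pi$ being the $(L,M)$-permutation under $\ii$, the isotopy $\ii$ restricts to an $\isot{L_i}{M_{\pi(i)}}$ isotopy for every $i \in [n]$. By the definition of $\tau$ being the $(M,N)$-permutation under $\jj$, the isotopy $\jj$ then restricts to an $\isot{M_{\pi(i)}}{N_{\tau(\pi(i))}}$ isotopy. Composing these restrictions, $\jj \circ \ii$ restricts to an $\isot{L_i}{N_{(\tau\circ\pi)(i)}}$ isotopy for every $i \in [n]$, which is exactly the statement that $\tau \circ \pi$ is the $(L,N)$-permutation under $\jj \circ \ii$.

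The only point worth flagging is notational consistency: the symbol ``$\jj \circ \ii$'' is used in the usual function-composition sense (apply $\ii$ first, then $\jj$), which is precisely what makes the induced permutation come out as $\tau \circ \pi$ rather than $\pi \circ \tau$. No geometric or combinatorial subtlety beyond this is involved.
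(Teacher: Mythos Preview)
Your proposal is correct. The paper states this observation as an elementary fact without proof, and your two-step verification (composition of ambient isotopies is an ambient isotopy, followed by the component-by-component chase showing $L_i \mapsto M_{\pi(i)} \mapsto N_{\tau(\pi(i))}$) is exactly the routine justification one would supply.
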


\section{Ring links: proof of Theorem~\ref{thm:arrr}}\label{sec:proofthmarrr}

For simplicity, throughout this paper we refer to a link in $\linksor{\rr_n}$ as a {\em positive ring link of size $n$} or simply as a {\em ring link of size $n$}, since all links under consideration are positive.

\subsection{Correspondence between ring links and binary words}

A ring link of size $n$ is naturally associated to a binary word of size $n$. Indeed, each link in $\linksor{\rr_n}$ is obtained by assigning an orientation to each of the $n$ pseudocircles in $\rr_n$. Such an orientation assignment can be naturally encoded as follows. See Figure~\ref{fig:f180} for an illustration.

\def\so#1{{\Scale[1.8]{#1}}}
\def\sp#1{{\Scale[1.1]{#1}}}
\def\sq#1{{\Scale[1.3]{#1}}}
\def\sr#1{{\Scale[1.3]{#1}}}

% *********************************************************
\begin{figure}[ht!]
\centering
\scalebox{0.6}{\input{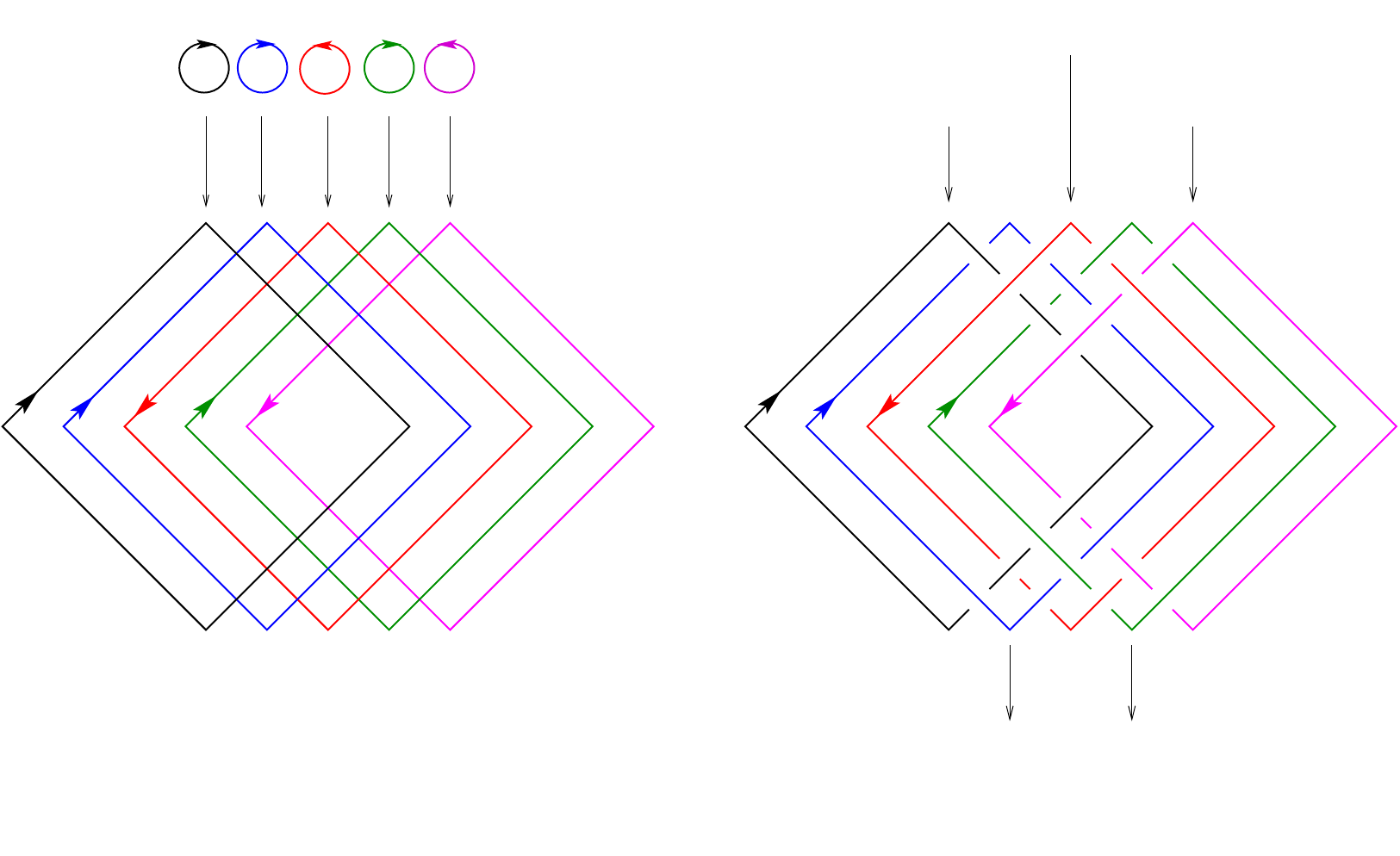_t}}
\caption{The oriented arrangement $\rr_5(00101)$, and its induced positive link, the ring link $R(00101)$. We indicate the five components $R(00101)_1$, $R(00101)_2$, $R(00101)_3$, $R(00101)_4$ and $R(00101)_5$ of $R(00101)$.}
\label{fig:f180}
\end{figure}
% *********************************************************

Let $i=1,2,\ldots,n$ be the pseudocircles in $\rr_n$, ordered from left to right. We encode an orientation using a word $a$ of length $n$, where the $i$-th entry of the word $a$ is $0$ (respectively, $1$) if pseudocircle $i$ is oriented clockwise (respectively, counterclockwise). Given such a word $a$, we use $\rr_n(a)$ to denote the corresponding oriented arrangement. Finally, we use $R(a)$ to denote the positive link {induced} by $\rr_n(a)$. It seems worth emphasizing that $R$ stands for ``ring''.

For instance, if we take the arrangement $\rr_5$ and orient clockwise pseudocircles $1,2$, and $4$, and orient counterclockwise pseudocircles $3$ and $5$, then we obtain the oriented arrangement $\rr_5(00101)$, illustrated on the left hand side of Figure~\ref{fig:f180}. This oriented arrangement induces the ring link $R(00101)$, illustrated on the right hand side of that figure. As we also illustrate in that figure, for each $i=1,\ldots,n$ we use $R(a)_i$ to denote the $i$-th component of the ring link $R(a)$.

In short, {\em there is a one-to-one correspondence between the set of all binary words of length $n$ and the collection $\linksor{\rr_n}$ of all ring links of size $n$}. As we shall see, a totally analogous observation holds for boot links of size $n$ and also for flower links of size $n$. For the rest of the paper, we shall refer to a binary word simply as a {\em word}.

We finally introduce a few elementary notions that will be used throughout this paper. If $a=a_1\cdots a_n$ is a word, the {\em reverse} $\In{a}$ of $a$ is $a_n\cdots a_1$, and its {\em negation} $\Ov{a}=\Ov{a_1 \cdots a_n}$ is $\Ov{a_1} \cdots \Ov{a_n}$ (as usual, $\Ov{0}=1$ and $\Ov{1}=0$). Thus if $a=00101$, then $\In{a}=10100$ and $\Ov{a}=11010$. Note that every word $a$ satisfies that $\In{(\Ov{a})}=\Ov{\In{a}}$.

A {\em subword} of a word $a_1 a_2 \cdots a_n$ is a word of the form $a_{i_1} a_{i_2} \cdots a_{i_m}$, where $i_1 < i_2 < \cdots < i_m$. A word is {\em oscillating} if it contains neither two consecutive $0$s nor two consecutive $1$s. We note that in the literature a word that we call oscillating would be called ``alternating'', but in the context of knot theory it seems best to avoid this terminology for anything other than its usual meaning.

The opposite notion of an oscillating word is a {\em monotone} word, which consists only of $0$s or only of $1$s. The {\em rank} of a word $a$, denoted by $\rank{a}$, is the length of a longest oscillating subword contained in $a$. Thus, for instance, $\rank{000100}=3$ and $\rank{00100010}=5$. 

If $m$ is a positive integer, we use $0^m$ to denote the monotone word $0\cdots 00$ of length $m$ with only $0$s. Similarly, $1^m$ is the monotone word $1\cdots 11$ of length $m$ with only $1$s.

Note that if $a_{i_1}\cdots a_{i_r}$ is a longest oscillating subword of $a$, then there exist positive integers $\alpha(1),\ldots,\alpha(r)$, such that $a$ is the concatenation $a_{i_1}^{\alpha(1)} \cdots a_{i_r}^{\alpha(r)}$ of $r$ monotone words. We say that $A_1=a_{i_1}^{\alpha(1)},\ldots, A_r=a_{i_r}^{\alpha(r)}$ are the {\em canonical subwords} of $a$, and that $a=a_1^{\alpha(1)} \cdots a_r^{\alpha(r)}=A_1\cdots A_r$ is the {\em canonical decomposition} of $a$. For instance, the word $a=001000110$ has rank $r=5$, and its canonical subwords are $A_1=0^2, A_2=1^1$, and $A_3=0^3$, $A_4=1^2$, and $A_5=0^1$, as schematized in Figure~\ref{fig:fcandec}.

% *********************************************************
\begin{figure}[ht!]
\centering
\[
    \overbrace{
    \underbrace{00}_{\text{$A_1$}} \text{\hglue 0.001 cm}
    \underbrace{1}_{A_2} \text{\hglue 0.001 cm}
    \underbrace{000}_{A_3} \text{\hglue 0.001 cm}
    \underbrace{11}_{A_4} \text{\hglue 0.001 cm}
    \underbrace{0}_{A_5} \text{\hglue 0.001 cm}
   }^\text{\Scale[1.6]{a=001000110}}
 \]
\caption{The canonical decomposition of $a=0010001100$ is $a=A_1A_2A_3A_4A_5$, where $A_1=00=0^2, A_2=1=1^1, A_3=000=0^3, A_4=11=1^2$, and $A_5=0=0^1$.}
\label{fig:fcandec}
\end{figure}
% *********************************************************

\subsection{Isotopies that act naturally on ring links}\label{sub:isotopiesring}

The heart of the proof of Theorem~\ref{thm:arrr} is an identification of which ring links are equivalent to a given ring link $R(a)$.  As we shall see shortly (see Observation~\ref{obs:ifring}), $R(a)$ is always equivalent to $R(\Ov{a})$, to $R(\In{a})$, and to $R(\IO{a})$ (and of course, to $R(a)$ itself). This will be the easy part of the identification of the links that are equivalent to $R(a)$. The considerably more difficult part (see Proposition~\ref{pro:arrr}) will be to show that {\em only} these links are equivalent to $R(a)$. 

Let us start by showing that $R(a)$ is always equivalent to $R(\Ov{a})$. We refer the reader to Figure~\ref{fig:f560}, where we illustrate an isotopy $\Ho$ (which stands for ``horizontal'', as $\Ho$ is a rotation of 180 degrees around a horizontal axis lying on the plane of the diagram). As we illustrate in that figure for the particular case in which $a=000101$, if we apply $\Ho$ to a ring link $R(a)=R(a_1\cdots a_n)$, as a result we obtain the ring link $R(\Ov{a_1\cdots a_n})=R(\Ov{a})$. That is, $\Isot{\Ho}{R(a)}{R(\Ov{a})}$. We conclude that (R1) {\em if $a$ is any word, then $R(a)\sim R(\Ov{a})$}.

% *********************************************************
\def\tf#1{{\Scale[1.9]{#1}}}
\def\tg#1{{\Scale[5]{#1}}}
\def\th#1{{\Scale[3]{#1}}}
\begin{figure}[ht!]
\centering
\scalebox{0.5}{\input{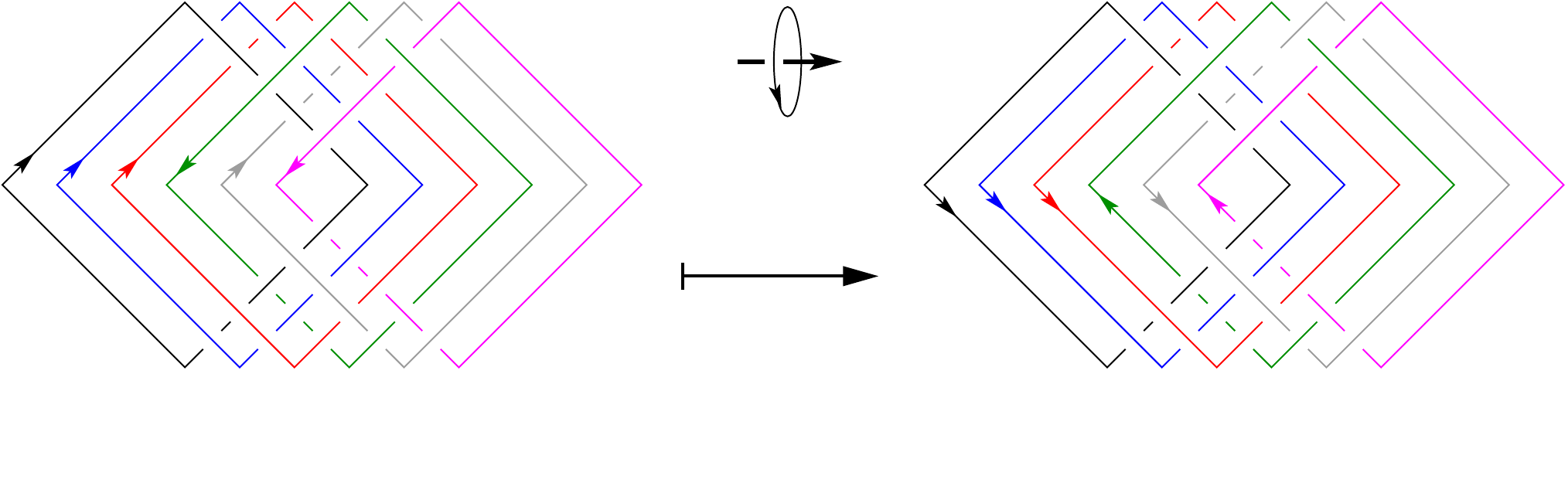_t}}
\caption{If we apply the isotopy $\Ho$ to the ring link $R(000101)$ we obtain the ring link $R(\Ov{000101})=R(111010)$. The permutation of this $R(000101) \mapsto R(111010)$ isotopy is the identity permutation $\identity$ on $[6]$. In general, if we apply $\Ho$ to a ring link $R(a)$ we obtain the ring link $R(\Ov{a})$, that is, $\Isot{\Ho}{R(a)}{R(\Ov{a})}$. If $R(a)$ has $n$ components then $\Ho$ maps the $i$-th component of $R(a)$ to the $i$-th component of $R(\Ov{a})$, for $i=1,\ldots,n$. That is, $\Isop{\Ho}{R(a)}{R(\Ov{a})}{\identity}$.}
\label{fig:f560}
\end{figure}
% *********************************************************

Consider now the isotopy  $\Ve$ illustrated in Figure~\ref{fig:f550}. The letter $\Ve$ stands for ``vertical'', as $\Ve$ is a rotation of 180 degrees around a vertical axis on the plane of the diagram. As we illustrate in that figure, if we apply $\Ve$ to a ring link $R(a)=R(a_1\cdots a_n)$, as a result we obtain the ring link $R\bigl(\,\Ov{\In{(a_1\cdots a_n)}}\,\bigr)=R\bigl(\In{(\Ov{a_1\cdots a_n})}\bigr)=R\bigl(\In{(\Ov{a})}\bigr)$. That is, $\Isot{\Ve}{R(a)}{R(\IO{a})}$. Therefore (R2) {\em if $a$ is any word, then $R(a)\sim R(\IO{a})$}.

% *********************************************************
\def\tf#1{{\Scale[1.9]{#1}}}
\def\tg#1{{\Scale[5]{#1}}}
\def\th#1{{\Scale[3]{#1}}}
\begin{figure}[ht!]
\centering
\scalebox{0.5}{\input{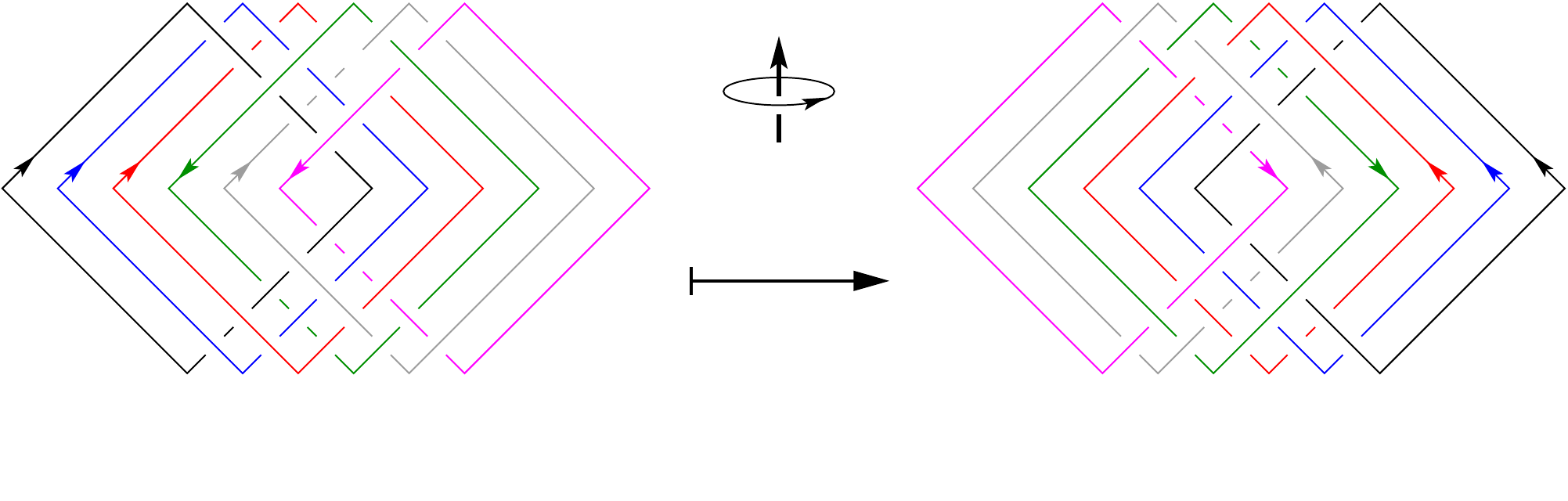_t}}
\caption{If we apply the isotopy $\Ve$ to the ring link $R(000101)$ we obtain the ring link $R(\In{(\Ov{000101})})=R(010111)$. The permutation of this $\isot{R(000101)}{R(010111)}$ isotopy is the reverse permutation $\reverse$ on $[6]$. In general, if we apply $\Ve$ to a ring link $R(a)$ we obtain the ring link $R(\In{(\Ov{a})})$, that is, $\Isot{\Ve}{R(a)}{R(\IO{a})}$. If $R(a)$ has $n$ components then $\Ve$ maps the $i$-th component of $R(a)$ to the $(n-i+1)$-st component of $R(\IO{a})$, for $i=1,\ldots,n$. That is, $\Isop{\Ve}{R(a)}{R(\IO{a})}{\reverse}$.}
\label{fig:f550}
\end{figure}
% *********************************************************

Since $\Isot{\Ho}{R(a)}{R(\Ov{a})}$ and $\Isot{\Ve}{R(a)}{R(\IO{a})}$, it follows that $R(a)\sim R(\Ov{a})$ and $R(a)\sim R(\IO{a})$. If we apply to $R(a)$ the isotopy $\Ho$ followed by the isotopy $\Ve$ (that is, $\Ve\circ\Ho$), we obtain the ring link $R(\IO{\Ov{a}})=R(\In{a})$. Therefore (R3) {\em if $a$ is any word, then $R(a)\sim R(\In{a})$}. 

We finally note that trivially (R4) {\em if $a$ is any word, then $R(a)\sim R(a)$}. In view of (R1)--(R4) we have the following.

\begin{observation}\label{obs:ifring}
Let $a$ {and} $b$ {be} words. If $b$ is either $a, \Ov{a}, \In{a}$, or $\IO{a}$, then $R(a)\sim R(b)$. 
\end{observation}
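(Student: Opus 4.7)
The plan is straightforward: Observation~\ref{obs:ifring} is essentially a consolidation of the four facts (R1)--(R4) that have been established in the paragraphs immediately preceding it, so the proof amounts to a clean four-case analysis on which of $a, \Ov{a}, \In{a}, \IO{a}$ the word $b$ equals, citing the appropriate fact in each case.

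First I would dispose of $b = a$ by invoking the identity isotopy, which is (R4). For $b = \Ov{a}$, I would apply the horizontal rotation $\Ho$: as observed, $\Ho$ fixes the unoriented arrangement $\rr_n$ setwise, reverses the orientation of every pseudocircle (flipping each bit of the encoding word), and, being a rigid motion of $\mathbb{R}^3$, preserves the over/under assignment at each crossing, so $\Isot{\Ho}{R(a)}{R(\Ov{a})}$; this is (R1). Analogously, for $b = \IO{a}$, the vertical rotation $\Ve$ both reverses the left-to-right ordering of the pseudocircles and flips each orientation, yielding $\Isot{\Ve}{R(a)}{R(\IO{a})}$, which is (R2). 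Finally, for $b = \In{a}$, I would compose via Observation~\ref{obs:comprings}: since $\Isot{\Ho}{R(a)}{R(\Ov{a})}$ and $\Isot{\Ve}{R(\Ov{a})}{R(\IO{\Ov{a}})}$, I obtain $\Isot{\Ve\circ\Ho}{R(a)}{R(\IO{\Ov{a}})}$, and the identity $\IO{\Ov{a}} = \In{a}$ (which follows from $\Ov{\Ov{a}} = a$ and the relation $\IO{x} = \In{(\Ov{x})}$) converts this into an isotopy to $R(\In{a})$; this is (R3).

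No real obstacle arises; the only delicate point worth flagging is the verification that $\Ho$ and $\Ve$, although they reverse orientations of the planar pseudocircles, nevertheless preserve the positivity of every crossing, which is immediate from the fact that they are orientation-preserving rigid motions of $\mathbb{R}^3$. Once this is noted, the observation follows by assembling (R1)--(R4) in the four cases above.
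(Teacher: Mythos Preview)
Your proposal is correct and mirrors the paper's own argument exactly: the paper establishes (R1)--(R4) in the paragraphs preceding the observation and then simply states that the observation follows ``in view of (R1)--(R4)''. Your added remark that $\Ho$ and $\Ve$ preserve positivity because they are orientation-preserving rigid motions of $\mathbb{R}^3$ is a welcome clarification of a point the paper leaves implicit.
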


\subsection{Reducing Theorem~\ref{thm:arrr} to a proposition}

Given a word $a$, we have thus identified four words $b$ (including $a$ itself) such that $R(a)\sim R(b)$. The main ingredient in the proof of Theorem~\ref{thm:arrr} is that the converse statement also holds, as long as the rank of $a$ is at least four:

\begin{proposition}[Implies Theorem~\ref{thm:arrr}]\label{pro:arrr}
Let $a$ be a word of rank {$r\ge 4$}. If $b$ is a word such that $R(a)\sim R(b)$, then $b$ is either $a, \Ov{a}, \In{a}$, or $\IO{a}$.
\end{proposition}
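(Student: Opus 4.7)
Let $\ii$ be an isotopy with $\Isop{\ii}{R(a)}{R(b)}{\pi}$. My plan has three steps.

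\textbf{Constraining $\pi$.} Linking numbers are ambient-isotopy invariants. In $\rr_n$, non-adjacent pseudocircles share no crossing and so the corresponding components have linking number $0$, while each adjacent pair shares exactly two crossings, both positive by construction, yielding linking number $+1$ independently of $a$. Thus every $R(a)$ has the same tridiagonal linking-number matrix, and preservation of this matrix by $\pi$ forces $\pi$ to be a path-graph automorphism, i.e.\ $\pi\in\{\identity,\reverse\}$. In view of Observation~\ref{obs:ifring}, $\pi=\identity$ is realized by the trivial isotopy and by $\Ho$ (giving $b\in\{a,\Ov{a}\}$), and $\pi=\reverse$ is realized by $\Ve$ and $\Ve\circ\Ho$ (giving $b\in\{\In{a},\IO{a}\}$).

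\textbf{Recovering the canonical decomposition.} Linking numbers alone cannot distinguish the different $R(a)$, so a finer invariant is needed in order to rule out the spurious $b$'s. My attempt would be to identify, for each consecutive pair $(i,i+1)$, a local invariant that detects whether $a_i=a_{i+1}$ (same canonical block) or $a_i\ne a_{i+1}$ (block boundary)---natural candidates are the isotopy type of the $3$-component sublink $\{R(a)_{i-1},R(a)_i,R(a)_{i+1}\}$, a Milnor triple-linking invariant, or a small Jones/Kauffman polynomial computation. If such a labelling of consecutive pairs is ambient-isotopy invariant, then the block-length sequence $(\alpha(1),\dots,\alpha(r))$ of $a$ is recovered up to the reversal dictated by $\pi$, and together with the starting bit (which is pinned down modulo global negation realized by $\Ho$) this pins $b$ to the four desired words, provided the ``collapsed'' oscillating word of length $r$ does not itself admit further equivalences. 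This is the kind of reduction I would hope Lemma~\ref{lem:workhorsering} carries out.

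\textbf{The oscillating case (main obstacle).} The crux of the argument is Lemma~\ref{lem:oscrings2}: when $a$ is already oscillating of rank $r\ge 4$, every consecutive pair is a block boundary, so the local invariants above give no additional information and a genuinely different tool is required. The natural framework is the intrinsic symmetry group of $R(a)$: I would try to show that it equals exactly the Klein four-group $\langle\Ho,\Ve\rangle$, ruling out any additional symmetry that would let some exotic $b\notin\{a,\Ov{a},\In{a},\IO{a}\}$ satisfy $R(a)\sim R(b)$. For small $r$ this can presumably be dispatched by explicit enumeration and tables of symmetry groups (as the roadmap signals in Section~\ref{sec:smallringlinks}); for general $r$, I expect one needs an inductive rigidity argument exploiting the length of the Hopf chain and the fact that a long chain has enough independent components to forbid any permutation beyond $\identity$ and $\reverse$. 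The hypothesis $r\ge 4$ looks essential precisely because for $r\le 3$ the underlying link is small enough to admit accidental symmetries (for instance the Hopf symmetries that appear at $r=2$) that would conflate distinct orientation patterns, and these must be excluded.
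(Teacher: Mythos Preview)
Your Step~1 is correct and is a genuine simplification over the paper. The paper obtains $\pi\in\{\identity,\reverse\}$ only as part~(1) of Lemma~\ref{lem:oscrings2}, and it does so by an induction (Lemma~\ref{lem:oscrings0}) whose base case $n=4$ is a {\tt SnapPy} computation of the intrinsic symmetry group of $R(0101)$ (Fact~\ref{fac:snappyring}, Claim~\ref{cla:oscrings2}). Your linking-number observation gives $\pi\in\{\identity,\reverse\}$ on the \emph{full} link in one line, with no rank hypothesis, and its restriction to any sublink is then automatically $\identity$ or $\reverse$ as well. That part is worth keeping.

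The gap is everything after that. Knowing $\pi$ does not determine $b$: every $R(a)$ has the same tridiagonal linking matrix, so linking numbers say nothing about the word $a$ beyond its length. What actually pins $b$ down in the paper is part~(2) of Lemma~\ref{lem:oscrings2}---the image under $\ii$ of a maximal oscillating sublink is again oscillating---and this is what drives the key step $(\dag)$ in the proof of Lemma~\ref{lem:workhorsering}. Your Step~2 alternative (detect $a_i=a_{i+1}$ from the isotopy type of the $3$-component sublink $R(a)_{i-1,i,i+1}$) is plausible but unverified; these sublinks have rank at most $3$, below the threshold of the proposition, so you cannot invoke the result recursively and would still need an explicit finite check of exactly the kind the paper does at $r=4$. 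Your Step~3 correctly names the mechanism the paper uses---a computer-verified base case plus induction---but does not carry it out; the paper's inductive step (proof of Lemma~\ref{lem:oscrings0}) works by comparing the restrictions of $\ii$ to the two overlapping sublinks $R(a)_{1,\ldots,m}$ and $R(a)_{2,\ldots,m+1}$ and forcing consistency. In short, your roadmap matches the paper's architecture and improves one piece of it, but the load-bearing statement (oscillating maps to oscillating) is identified rather than proved.
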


The proof of this proposition will take considerably more effort than the arguments that led to Observation~\ref{obs:ifring}. We defer the proof of the proposition for the moment, and show that Theorem~\ref{thm:arrr} follows easily by combining these two statements.

\begin{proof}[Proof of Theorem~\ref{thm:arrr} (assuming Proposition~\ref{pro:arrr})]
Let $a$ be a word of length $n$. Using standard calculations one obtains that the probability that $\rank{a}$ is less than $4$ goes to $0$ as $n\to \infty$, and also the probability that there are two identical words in $\{a,\Ov{a},\In{a},\In{(\Ov{a})}\}$ also goes to $0$ as $n\to \infty$. By Observation~\ref{obs:ifring} and Proposition~\ref{pro:arrr} this implies that the probability that there are {\em exactly} four distinct words $b$ of length $n$ (including $a$) such that $R(a) \sim R(b)$ goes to $1$ as $n\to\infty$.

The one-to-one correspondence between binary words of length $n$ and elements of $\linksor{\rr_n}$ then implies that the probability that the equivalence class of a random link in $\linksor{\rr_n}$ has size $4$ goes to $1$ as $n \to \infty$. Since $|\linksor{\rr_n}|=2^n$, Theorem~\ref{thm:arrr} follows. 
\end{proof}

% ******************************************************************************************************************
\section{Proof of Proposition~\ref{pro:arrr}}\label{sec:proofproparrr}

Before we move on to the proof of Proposition~\ref{pro:arrr} (or, more accurately, to reducing the proposition to a couple of lemmas, namely Lemmas~\ref{lem:workhorsering} and~\ref{lem:oscrings2}), {we} briefly discuss sublinks of ring links, as they play a central role in this discussion.

\subsection{Sublinks of ring links}

Let $a=a_1\ldots a_n$ be a  word, and let $i_1,\ldots,i_k$ be integers such that $1 \le i_1 < \cdots < i_k \le n$. Then $a_{i_1}\cdots a_{i_k}$ is a subword of $a$, and this subword naturally corresponds to a link $R(a)_{i_1}\cup \cdots \cup R(a)_{i_k}$ (recall that $R(a)_i$ is the $i$-th component of the ring link $R(a)$). We say that $R(a)_{i_1}\cup \cdots \cup R(a)_{i_k}$ is a {\em sublink} of $R(a)$, and for brevity we use $R(a)_{i_1,\ldots,i_k}$ to denote it.

It is worth noting that this notation is consistent with the way we denote a single component of $R(a)$: if $k=1$ then we have a single integer $i_1$, and so the corresponding sublink consists of the component $R(a)_{i_1}$.

We say that the ring link $R(a)$ is {\em oscillating} if the word $a$ is oscillating, and we say that the sublink $R(a)_{i_1,\ldots,i_k}$ of $R(a)$ is {\em oscillating} if $a_{i_1} \cdots a_{i_k}$ is an oscillating subword of $a$. Note that obviously no oscillating sublink of $R(a)$ can have size larger than the rank $r$ of $a$, since this is the length of a longest oscillating subword of $a$. 

\subsection{Reducing Proposition~\ref{pro:arrr} to two lemmas}

Recall that Proposition~\ref{pro:arrr} claims that if $a$ is a word with rank $r\ge 4$, and $R(a)\sim R(b)$, then $b$ is either $a,\Ov{a},\In{a}$, or $\IO{a}$.

The proof of this proposition has two main ingredients. The first one is that if $\ii$ is an $\isot{R(a)}{R(b)}$ isotopy, then we can fully reconstruct $b$ from the action of $\ii$ on an oscillating sublink $R(a)_{i_1,\ldots,i_r}$ of $R(a)$ of size $r$. More precisely, let $R(b)_{j_1,\ldots,j_r}$ be the image of $R(a)_{i_1,\ldots,i_r}$ under $\ii$. Suppose that we know 

(i) the subword $b_{j_1} \cdots b_{j_r}$ of $b$, and 

(ii) the $(R(a)_{i_1,\ldots,i_r},R(b)_{j_1,\ldots,j_r})$-permutation under $\ii$. 

Then from (i) and (ii) we can fully determine the word $b$. This is the content of Lemma~\ref{lem:workhorsering} below.

The second ingredient in the proof of Proposition~\ref{pro:arrr} is that we can actually fully understand all the possibilities for (i) and (ii) in the previous paragraph. Indeed, as we claim in Lemma~\ref{lem:oscrings2}, $b_{j_1} \cdots b_{j_r}$ is necessarily also oscillating (as $a_{i_1} \cdots a_{i_r}$) and the $(R(a)_{i_1,\ldots,i_r},R(b)_{j_1,\ldots,j_r})$-permutation under $\ii$ is either the identity permutation $\identity$ or the reverse permutation $\reverse$. As we shall see, Proposition~\ref{pro:arrr} follows easily by combining these two lemmas.

Lemma~\ref{lem:workhorsering} involves the concept of the $\pi$-image of a word. We refer the reader to Figure~\ref{fig:f610} for an illustration of this crucial notion. Let $a,b$ be words of the same rank $r$, and let $A_1A_2\cdots A_r$ and $B_1 B_2 \cdots B_r$ be the canonical decompositions of $a$ and $b$, respectively. Let $\pi$ be a permutation of $[r]$. We say $b$ is the $\pi$-{\em image} of $a$ if there is a bijection between $A_i$ and $B_{\pi(i)}$, that is, if $|A_i|=|B_{\pi(i)}|$ for $i=1,\ldots,r$. 

% *********************************************************
\def\ya#1{{\Scale[2]{#1}}}
\def\tf#1{{\Scale[1.9]{#1}}}
\def\tg#1{{\Scale[5]{#1}}}
\def\th#1{{\Scale[3]{#1}}}
\def\tj#1{{\Scale[3]{#1}}}
\begin{figure}[ht!]
\centering
\scalebox{0.5}{\input{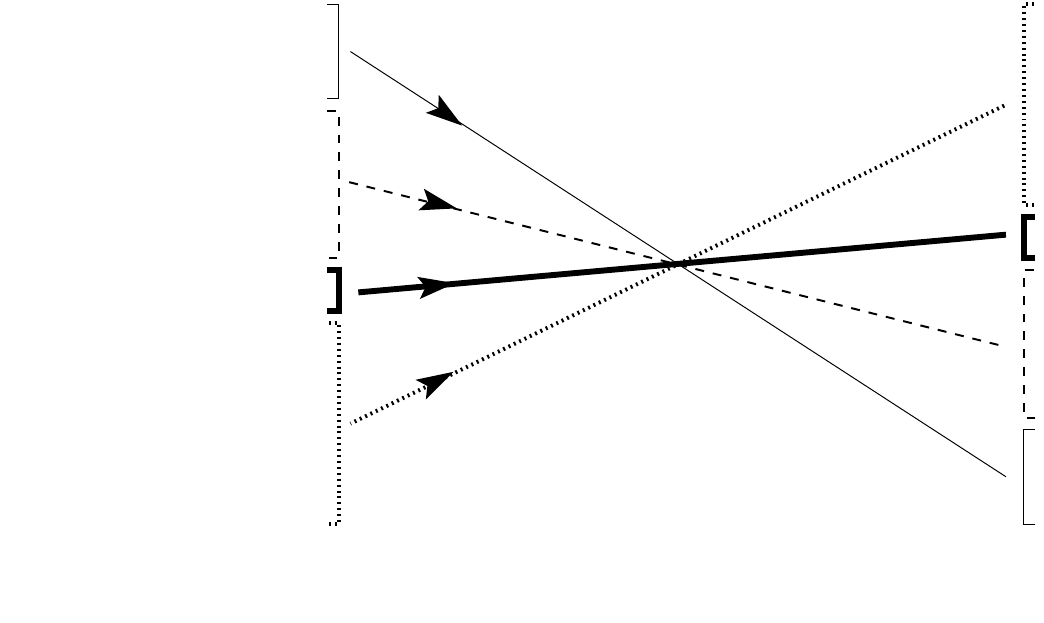_t}}
\caption{On the left-hand side we have the word $a=0011101111$, whose canonical decomposition is $A_1A_2A_3A_4$, where $A_1=00=0^2, A_2=111=1^3, A_3=0=0^1$, and $A_4=1111=1^4$. On the right-hand side we have the word $b=0000100011$, whose canonical decomposition is $B_1B_2B_3B_4$, where $B_1=0000=0^4, B_2=1=1^1, B_3=000=0^3$, and $B_4=11=1^2$. We indicate with arrows a natural bijection between the canonical subwords of $a$ and the canonical subwords of $b$. If we let $\pi$ be the reverse permutation $\reverse=(4\,3\,2\,1)$ on $[4]$, we have that $A_i$ is mapped to (has the same length as) $B_{\pi(i)}$ for $i=1,2,3,4$. Thus $b$ is the $\pi$-image of $a$.}
\label{fig:f610}
\end{figure}
% *********************************************************

Clearly, if $b$ is the $\pi$-image of $a$ for some permutation $\pi$ of $[r]$, then in particular $b$ has the same length as $a$. Moreover, if we know an oscillating subword $b_{j_1}\cdots b_{j_r}$ of $b$ of length $r$ and we know a permutation $\pi$ such that $b$ is the $\pi$-image of $a$, then we can fully reconstruct $b$. This explains the importance of the next statement, whose proof is deferred to the next section.

\begin{lemma}\label{lem:workhorsering}
Let $a$ be a word with rank $r\ge 4$, let $b$ be a word such that $R(a)\sim R(b)$, and let $\ii$ be an $\isot{R(a)}{R(b)}$ isotopy. Let $R(a)_{i_1,\ldots,i_r}$ be an oscillating sublink of $R(a)$, and let $R(b)_{j_1,\ldots,j_r}$ be its image under $\ii$. Then $b$ is the $\pi$-image of $a$, where $\pi$ is the $(R(a_{i_1,\ldots,i_r}),R(b_{j_1,\ldots,j_r}))$-permutation under $\ii$.
\end{lemma}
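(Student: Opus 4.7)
The plan has three stages. First, I would establish the topological invariant $(\star)$ that for any ring-link isotopy $\Isot{\ii}{R(a)}{R(b)}$, two components of $R(a)$ lie in the same canonical block of $a$ if and only if their $\ii$-images lie in the same canonical block of $b$. Second, I would deduce from $(\star)$ that $a$ and $b$ have the same rank $r$ and that $\ii$ restricts to a bijection from block $A_t$ onto block $B_{\pi(t)}$ for every $t$. Third, I would conclude that $|A_t|=|B_{\pi(t)}|$ and hence $b$ is the $\pi$-image of $a$.

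Granted $(\star)$, the deduction is short. The image sublink $R(b)_{j_1,\ldots,j_r}$ must have its $r$ components distributed over $r$ distinct canonical blocks of $b$, since the representatives $R(a)_{i_1},\ldots,R(a)_{i_r}$ lie in $r$ distinct blocks of $a$; this forces $\rank{b}\ge r$, and the symmetric argument applied to $\ii^{-1}$ yields $\rank{b}=r$, so $b$ admits a canonical decomposition $B_1B_2\cdots B_r$. Moreover, for any component $R(a)_k$ in block $A_t$, applying $(\star)$ to the pair $R(a)_k, R(a)_{i_t}$ forces $\ii(R(a)_k)$ and $\ii(R(a)_{i_t})=R(b)_{j_{\pi(t)}}$ to lie in the same canonical block of $b$, namely $B_{\pi(t)}$. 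Since $\ii$ is a bijection between the components of $R(a)$ and those of $R(b)$, it restricts to a bijection from $A_t$ onto $B_{\pi(t)}$, giving $|A_t|=|B_{\pi(t)}|$.

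The main obstacle is the proof of $(\star)$. Pairwise linking numbers cannot distinguish ``same block'' from ``different block'' pairs, because in a positive ring link every pair of components has linking number $+1$, so some more refined topological feature of a pair of same-block components is required. A natural candidate I would pursue is the existence of an embedded annulus in $S^3$ cobounded by the two components and disjoint from all the remaining components of the link: two parallel same-block components admit such a disjoint cobounding annulus (essentially a thin ribbon running between them in the diagram, together with the other same-block components nested inside or outside it), whereas two components from distinct blocks do not, because any annulus they cobound is obstructed by the oppositely-oriented pseudocircles that separate the blocks in the planar arrangement. The property of cobounding such a disjoint annulus is clearly invariant under ambient link isotopy, so once this dichotomy is established, it yields $(\star)$ and completes the proof of the lemma.
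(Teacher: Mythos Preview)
Your reduction from $(\star)$ to the lemma is sound: once blocks go bijectively to blocks, the equalities $|A_t|=|B_{\pi(t)}|$ and hence the $\pi$-image conclusion are immediate. The gap lies entirely in establishing $(\star)$, and you are candid that the annulus criterion is only a candidate you ``would pursue.'' That criterion is problematic in both directions. On the existence side, your thin-ribbon picture is plausible for two \emph{adjacent} same-block components, but for non-adjacent ones (say positions $1$ and $3$ in $a=0001$) the intermediate component $R(a)_2$ links both boundary curves, and it is not clear how to route an embedded annulus past it; saying component $2$ is ``nested inside or outside'' presupposes what must be proved. On the non-existence side you offer only a diagrammatic remark, and planar separation is not an isotopy invariant; a real argument would require an analysis of essential annuli in the link exterior, which you have not supplied. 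There is also a bootstrapping issue: you invoke the criterion on the $b$ side before knowing $\rank{b}\ge 4$, yet for small-rank words such as $b=01$ the criterion visibly fails (the two different-block components cobound an annulus vacuously disjoint from the empty remainder).

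The paper's route is quite different and sidesteps all of this. It makes no attempt to characterise ``same block'' intrinsically; instead it \emph{assumes} Lemma~\ref{lem:oscrings2} (proved separately, with its $n=4$ base case handled by a {\tt SnapPy} computation of the intrinsic symmetry group of $R(0101)$) and argues by a swap: if some component $R(a)_{i_k'}$ of block $A_k$ mapped outside $B_{\pi(k)}$, then replacing $i_k$ by $i_k'$ in the chosen oscillating sublink yields another size-$r$ oscillating sublink of $R(a)$ whose image under $\ii$ misses $B_{\pi(k)}$ entirely and hence cannot be oscillating, contradicting Lemma~\ref{lem:oscrings2}(2). Equal rank comes from Corollary~\ref{cor:ringsrank}, and the block-size equalities then follow from $\sum_k|A_k|=\sum_k|B_{\pi(k)}|=n$. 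Since Lemma~\ref{lem:oscrings2} is needed anyway for Proposition~\ref{pro:arrr}, this costs nothing extra. Your direct approach, if it could be completed, would buy independence from the computer-verified base case---but that is precisely the part left undone.
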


In order to make use of Lemma~\ref{lem:workhorsering} we need to know which sublinks $R(b)_{j_1,\ldots,j_r}$ of $R(b)$ can be the image of $R(a)_{i_1,\ldots,i_r}$ under $\ii$, and we need to know which are the possible $(R(a_{i_1,\ldots,i_r}),R(b_{j_1,\ldots,j_r}))$-permutations under $\ii$. This is precisely the information given by our next statement, the second main ingredient in the proof of Proposition~\ref{pro:arrr}.

\begin{lemma}\label{lem:oscrings2}
Let $a=a_1 \cdots a_n$ be a word of rank $r\ge 4$, and let $R(a)_{i_1,\ldots,i_r}$ be an oscillating sublink of $R(a)$ of size $r$. Let $b$ be a word such that $R(a)\sim R(b)$, and let $\ii$ be an $\isot{R(a)}{R(b)}$ isotopy. Let $R(b)_{j_1,\ldots,j_r}$ be the sublink of $R(b)$ that is the image of $R(a)_{i_1,\ldots,i_r}$ under $\ii$. Then,
\begin{enumerate}
\item[(1)] the $(R(a)_{i_1,\ldots,i_r},R(b)_{j_1,\ldots,j_r})$-permutation under $\ii$ is either the identity permutation $\identity$ or the reverse permutation $\reverse$; and 
\item[(2)] $b_{j_1}\cdots b_{j_r}$ is an oscillating subword of $b$. That is, $R(b)_{j_1,\ldots,j_r}$ is an oscillating sublink of $R(b)$.
\end{enumerate}
\end{lemma}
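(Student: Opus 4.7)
The plan is to establish the two claims by combining two ingredients: first, the preservation of linking numbers under oriented ambient isotopy, which will rigidly constrain the global component permutation induced by $\ii$; and second, Lemma~\ref{lem:workhorsering}, already available, which will reconstruct the canonical block structure of $b$ from that permutation on the oscillating sublink. I begin by introducing the global permutation $\sigma$ of $[n]$ defined by $\ii(R(a)_i)=R(b)_{\sigma(i)}$, which is related to the local permutation $\tau$ on $[r]$ by $\sigma(i_m)=j_{\tau(m)}$ for $m\in[r]$.

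To prove claim (1), I will compute linking numbers in $R(a)$: in the ring arrangement $\rr_n$ pseudocircles $i$ and $j$ meet only when $|i-j|=1$, and the positive-link convention makes the two crossings between them positive, so $\mathrm{lk}(R(a)_i,R(a)_j)$ equals $+1$ if $|i-j|=1$ and $0$ otherwise. The same formula holds in $R(b)$. Since $\ii$ preserves signed linking numbers, $\sigma$ must be an automorphism of the weighted path graph $P_n$ with weight $+1$ on each edge, and the automorphism group of this graph is $\{\identity,\reverse\}$. Restricting $\sigma$ to the increasing sequence $i_1<\cdots<i_r$ then gives $\tau=\identity$ when $\sigma=\identity$ (so $j_m=i_m$) and $\tau=\reverse$ when $\sigma=\reverse$ (so $j_m=n-i_{r-m+1}+1$), which proves (1).

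For claim (2), I will apply Lemma~\ref{lem:workhorsering} with the permutation $\pi=\tau\in\{\identity,\reverse\}$ just obtained. It will follow that $b$ is the $\tau$-image of $a$; writing $b=B_1\cdots B_r$ for its canonical decomposition, this means $|B_{\tau(m)}|=|A_m|$ for all $m$. When $\tau=\identity$ the blocks of $b$ occupy the same positions as those of $a$, and so $j_m=i_m$ lies in $B_m$. When $\tau=\reverse$ the block sizes of $b$ are those of $a$ reversed; a short calculation using $j_m=n-i_{r-m+1}+1$ again places $j_m$ inside $B_m$. In both cases $b_{j_m}\in B_m$, and since consecutive canonical blocks alternate characters we conclude $b_{j_m}\neq b_{j_{m+1}}$ for $m=1,\ldots,r-1$, so $b_{j_1}\cdots b_{j_r}$ is oscillating.

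The main obstacle will be the argument for (1): although the linking-matrix reasoning is short, it depends delicately on verifying that $R(b)$ shares with $R(a)$ the full weighted linking graph of $P_n$ (this is where positivity of both link diagrams is used) and on the identification of $\mathrm{Aut}(P_n)$ as $\{\identity,\reverse\}$. The rank hypothesis $r\ge 4$ is not needed in (1) but is essential for (2), since Lemma~\ref{lem:workhorsering} requires it; without that bound claim (2) can fail, for instance $R(01)\sim R(00)$ forces the identity permutation yet gives the non-oscillating image subword $00$. This is presumably what motivates the paper's separate treatment of small $r$ via intrinsic symmetry groups.
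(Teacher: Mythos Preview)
Your argument for (1) rests on the claim that in the ring arrangement $\rr_n$ pseudocircles $i$ and $j$ meet only when $|i-j|=1$. This is false: by the very definition of an arrangement of pseudocircles given in the introduction, \emph{every} pair of pseudocircles intersects at exactly two points. This is confirmed by Observation~\ref{obs:subr} (any subset of components of $R(a)$ is strongly isotopic to a smaller ring link, which would be impossible if some pairs were unlinked) and stated explicitly in Section~\ref{sec:concludingremarks}: ``every pair of components form a Hopf link''. Hence every pair of components of $R(a)$ has linking number $+1$, the linking matrix is that of the complete graph $K_n$ with all edge-weights equal, and its automorphism group is the full symmetric group $S_n$. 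Pairwise linking numbers therefore impose no restriction whatsoever on the global permutation $\sigma$, and your proof of (1) collapses. This is precisely why the paper has to work much harder: it reduces to the case where $a$ itself is oscillating (Lemma~\ref{lem:oscrings0}), proves the base case $n=4$ by computing the intrinsic symmetry groups of $R(0101)$ and $R(1010)$ with {\tt SnapPy} (Fact~\ref{fac:snappyring} and Claim~\ref{cla:oscrings2}), and then runs an induction comparing the restrictions of $\ii$ to two overlapping oscillating sublinks of size $m$.

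Your argument for (2) has a separate, fatal circularity. You invoke Lemma~\ref{lem:workhorsering} as ``already available'', but in the paper's logical order that lemma is proved \emph{assuming} Lemma~\ref{lem:oscrings2}: its proof uses Corollary~\ref{cor:ringsrank} (which is a consequence of Lemma~\ref{lem:oscrings2}) to know that $\rank{b}=r$, and it uses Lemma~\ref{lem:oscrings2}(2) directly in the key step ($\dag$). Even if your linking-number idea had worked and given you the global permutation, knowing $\sigma\in\{\identity,\reverse\}$ says nothing about the characters $b_i$, so you could not conclude that $\rank{b}=r$ nor that the canonical blocks of $b$ line up with those of $a$; Lemma~\ref{lem:workhorsering} is simply not available to you here.
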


We defer the proof of Lemma~\ref{lem:oscrings2} to Sections~\ref{sec:smallringlinks} and~\ref{sec:proofoscrings2}, and we prove Proposition~\ref{pro:arrr} assuming the lemmas. Before proceeding to the proof we note the following easy consequence of Lemma~\ref{lem:oscrings2}.

\begin{corollary}\label{cor:ringsrank}
Let $a$ be a word of rank $r\ge 4$, and let $b$ be a word such that $R(a)\sim R(b)$. Then $\rank{b}=r$.
\end{corollary}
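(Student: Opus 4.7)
The plan is to establish the equality by proving both inequalities $\rank{b}\ge r$ and $\rank{b}\le r$, in each case appealing to part~(2) of Lemma~\ref{lem:oscrings2}.

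First I would prove $\rank{b}\ge r$. Since $\rank{a}=r$, the word $a$ admits an oscillating subword of length $r$, and the corresponding oscillating sublink $R(a)_{i_1,\ldots,i_r}$ of $R(a)$ has size $r$. Let $\ii$ be any $\isot{R(a)}{R(b)}$ isotopy witnessing $R(a)\sim R(b)$, and let $R(b)_{j_1,\ldots,j_r}$ be the image of $R(a)_{i_1,\ldots,i_r}$ under $\ii$. Since $r\ge 4$, the hypotheses of Lemma~\ref{lem:oscrings2} are satisfied, so part~(2) of that lemma asserts that $b_{j_1}\cdots b_{j_r}$ is an oscillating subword of $b$. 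This subword has length $r$, so $\rank{b}\ge r$.

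The reverse inequality $\rank{b}\le r$ would then follow by symmetry, via the inverse isotopy. Since ambient isotopy is an equivalence relation, $\In{\ii}$ is an $\isot{R(b)}{R(a)}$ isotopy witnessing $R(b)\sim R(a)$. From the previous paragraph we already have $\rank{b}\ge r\ge 4$, so the hypotheses of Lemma~\ref{lem:oscrings2} are met with the roles of $a$ and $b$ interchanged and with $\In{\ii}$ playing the role of $\ii$. Starting from an oscillating sublink of $R(b)$ of size $\rank{b}$ and examining its image in $R(a)$ under $\In{\ii}$, part~(2) of the lemma yields an oscillating sublink of $R(a)$ of size $\rank{b}$. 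Hence $r=\rank{a}\ge\rank{b}$, completing the proof.

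I do not anticipate any genuine obstacle: the argument is a direct double application of Lemma~\ref{lem:oscrings2}, once to $\ii$ and once to $\In{\ii}$. The one point that requires a moment's care is to observe that the hypothesis $\rank{b}\ge 4$ needed for the second invocation is not an extra assumption but is already supplied by the conclusion of the first half. All the real work has been pushed into Lemma~\ref{lem:oscrings2}, and this corollary is essentially the cleanest formal consequence extracted from its statement.
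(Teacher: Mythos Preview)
Your proposal is correct and follows essentially the same approach as the paper's own proof: apply Lemma~\ref{lem:oscrings2}(2) to an $\isot{R(a)}{R(b)}$ isotopy to get $\rank{b}\ge r$, observe that this yields $\rank{b}\ge 4$, and then apply the lemma in the reverse direction to obtain $r\ge\rank{b}$. Your write-up is slightly more explicit about the inverse isotopy and the need to verify the rank-$4$ hypothesis before the second application, but the substance is identical.
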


\begin{proof}
Let $s:=\rank{b}$. Lemma~\ref{lem:oscrings2} implies that there is an oscillating sublink of $R(b)$ of size $r$, and so it follows that $s\ge r$. In particular, $\rank{b}\ge 4$, and so we can apply the lemma also to an $\isot{R(b)}{R(a)}$ isotopy, obtaining that there must exist an oscillating sublink of $R(a)$ of size $s$, and so $r\ge s$. Thus $s=r$. 
\end{proof}

\begin{proof}[Proof of Proposition~\ref{pro:arrr} (assuming Lemmas~\ref{lem:workhorsering} and~\ref{lem:oscrings2})]
Let $a=a_1\cdots a_n$ be a word with rank $r\ge 4$. Let $b=b_1\cdots b_n$ be a word such that $R(a)\sim R(b)$, and let $\ii$ be an $\isot{R(a)}{R(b)}$ isotopy. 

Let $R(a)_{i_1,\ldots,i_r}$ be an oscillating sublink of $R(a)$. Let $R(b)_{j_1,\ldots,j_r}$ be the sublink of $R(b)$ that is the image of $R(a)_{i_1,\ldots,i_r}$ under $\ii$, and let $\pi$ be the $(R(a)_{i_1,\ldots,i_r},R(b)_{j_1,\ldots,j_r})$-permutation under $\ii$.

Since $a_{i_1} \cdots a_{i_r}$ is oscillating, it follows that the only two oscillating words of length $r$ are $a_{i_1} \cdots a_{i_r}$ and $\Ov{a_{i_1} \cdots a_{i_r}}$. Therefore Lemma~\ref{lem:oscrings2}\,(2) implies that either 
\begin{enumerate}
\item[($\dag$)] $b_{j_1}\cdots b_{j_r}=a_{i_1}\cdots a_{i_r}$; or
\item[($\ddag$)] $b_{j_1}\cdots b_{j_r}=\Ov{a_{i_1}\cdots a_{i_r}}$. 
\end{enumerate}

Let $a=A_1 \cdots A_r=a_{i_1}^{|A_1|}\cdots a_{i_r}^{|A_r|}$ be the canonical decomposition of $a$. We note that Corollary~\ref{cor:ringsrank} implies that the rank of $b$ is also $r$, and so we can let  $b=B_1 \cdots B_r=b_{j_1}^{|B_1|}\cdots b_{j_r}^{|B_r|}$ be the canonical decomposition of $b$. 

Lemma~\ref{lem:workhorsering} implies that $b$ is the $\pi$-image of $a$, and Lemma~\ref{lem:oscrings2}\,(1) implies that $\pi$ is either $\identity$ or $\reverse$. Therefore either 
\begin{enumerate}
\item[($*$)] $|B_k|=|A_k|$ for $k=1,\ldots,r$; or
\item[($**$)] $|B_k|=|A_{r-k+1}|$ for $k=1,\ldots,r$.
\end{enumerate}

A glance at the canonical decompositions of $a$ and $b$ shows that if ($*$) and ($\dag$) hold then $b=a$; if ($*$) and ($\ddag$) hold then $b=\Ov{a}$; if ($**$) and ($\dag$) hold then $b=\In{a}$; and if ($**$) and ($\ddag$) hold then $b=\IO{a}$. Thus $b$ is either $a,\Ov{a},\In{a}$, or $\IO{a}$.
\end{proof}

\section{Proof of Lemma~\ref{lem:workhorsering}}\label{sec:proofworkhorsering}

The proof of Lemma~\ref{lem:workhorsering} is particularly important, as it will be ``recycled'' almost in its entirety when we deal with boot links and with flower links.

\begin{proof}[Proof of Lemma~\ref{lem:workhorsering} (assuming Lemma~\ref{lem:oscrings2})]
The proof relies on a detailed description of the canonical decompositions of $a$ and $b$. Let $a=A_1\cdots A_r$ be the canonical decomposition of $a$. For each $i=1,\ldots,r$ we let $p_i,q_i$ be the integers such that $A_i=a_{p_i}a_{p_{i+1}} \cdots a_{q_i}$, and let $I_i=\{p_i,p_{i+1},\ldots,q_i\}$. Note that actually $p_i$ may be the same as $q_i$, which is the case when the canonical subword $A_i$ has length $1$. Also note that $p_1=1$ and $q_r=n$. Thus the canonical decomposition of $a$ is as follows: 
\[
\sp{a}\,\,\,\sp{=}\,\,\sp{\underbrace{a_{p_1}\,\, \cdots\,\, a_{q_1}}_{\text{$\sp{A_1}$}}} \,\,\,
\sp{\underbrace{a_{p_2}\,\, \cdots \,\,a_{q_2}}_{\text{$\sp{A_2}$}}} \, \cdots\cdots \,
\sp{\underbrace{a_{p_r}\,\, \cdots\,\, a_{q_r}}_{\text{$\sp{A_r}$}}}\,\sp{.}
\]

We say that $R(a)_{p_k,\ldots,q_k}$ is the {\em $k$-th canonical sublink $R_k$ of $R(a)$}, for $k=1,\ldots,r$. Evidently, $R(a)$ is the disjoint union of its canonical sublinks. A crucial observation is that a sublink of $R(a)$ of size $r$ is oscillating if and only if it contains exactly one component of each canonical sublink.

Now, by Corollary~\ref{cor:ringsrank}, the rank of $b$ is also $r$. Thus we let $b=B_1\cdots B_r$ be the canonical decomposition of $b$. For each $i=1,\ldots,r$ we let $s_i,t_i$ be the integers such that $B_i=b_{s_i} \cdots b_{t_i}$, and let $J_i=\{s_i,s_{i+1},\ldots,t_i\}$. Thus 
\[
\sp{b}\,\,\,\sp{=}\,\,\sp{\underbrace{b_{s_1}\,\, \cdots\,\, b_{t_1}}_{\text{$\sp{B_1}$}}} \,\,\,
\sp{\underbrace{b_{s_2}\,\, \cdots \,\,b_{t_2}}_{\text{$\sp{B_2}$}}} \, \cdots\cdots \,
\sp{\underbrace{b_{s_{i}} \,\,\cdots\,\, b_{t_i}}_{\text{$\sp{B_i}$}}} \cdots\cdots \,
\sp{\underbrace{b_{s_{r}}\,\, \cdots\,\, b_{t_r}}_{\text{$\sp{B_r}$}}}\,\sp{.}
\]

Similarly as with $R(a)$, we say that $R(b)_{s_k,\ldots,t_k}$ is the {\em $k$-th canonical sublink $R_k(b)$ of $R(b)$}, for $k=1,\ldots,r$. Clearly $R(b)$ is the disjoint union of its canonical sublinks, and a sublink of $R(b)$ of size $r$ is oscillating if and only if it contains exactly one component of each canonical sublink.

Note that since $a_{i_1}\cdots a_{i_r}$ is an oscillating subword of $a$, it follows that necessarily $a_{i_k}\in A_k$ for $k=1,\ldots,r$. Thus $A_k=a_{i_k}^{|A_k|}$ for $k=1,\ldots,r$, and so ($*$) {$a=a_{i_1}^{|A_1|}\cdots a_{i_r}^{|A_r|}$. A totally analogous argument shows that ($**$) $b=b_{i_1}^{|B_1|}\cdots b_{i_r}^{|B_r|}$}. 

Recall that $\pi$ is the $(R(a)_{i_1,\ldots,i_k},R(b)_{j_1,\ldots,j_k})$-permutation under $\ii$. This means that $\ii$ maps $R(a)_{i_k}$ to $R(b)_{j_{\pi(k)}}$ for $k=1,\ldots,r$. Since $R(a)_{i_k}$ (respectively, $R(b)_{j_{\pi(k)}}$) is in the canonical sublink $R_k(a)$ of $R(a)$ (respectively, in the canonical sublink $R_{\pi(k)}$ of $R(b)$), this implies that $\ii$ takes one particular component of $R_k(a)$ to one particular component of $R_{\pi(k)}(b)$, for $k=1,\ldots,r$. The key argument is that a much stronger statement holds: ($\dag$) {\em for $k=1,\ldots,r$, $\ii$ takes each component of $R_k(a)$ to a component in $R_{\pi(k)}(b)$.}

To prove ($\dag$), by way of contradiction suppose that there is a $k\in [r]$ such that $\ii$ takes some component $R(a)_{i'_k}$ of $R_k(a)$ to a component in $R(b)$ that is not in $R_{\pi(k)}(b)$. Then the image under $\ii$ of the oscillating sublink $R(a)_{i_1,\ldots,i_{k-1},i'_k,i_{k+1},\ldots,i_r}$ of $R(a)$ does not contain any component of the canonical sublink $R_{\pi(k)}(b)$ of $R(b)$. Therefore $\ii$ takes an oscillating sublink of $R(a)$ of size $r$ to a sublink of $R(b)$ that is not oscillating. But this contradicts Lemma~\ref{lem:oscrings2}(2). Thus ($\dag$) holds.

Since $|R_k(a)|=|A_k|$ and $|R_{\pi(k)}(b)|=|B_{\pi(k)}|$ for $k=1,\ldots,r$, ($\dag$) implies that $|A_k| \le |B_{\pi(k)}|$ for $k=1,\ldots,r$. Since $\sum_{k=1}^r |A_k|=\sum_{k=1}^r |B_{\pi(k)}| = n$, it follows that ($\ddag$) $|A_k| = |B_{\pi(k)}|$ for $k=1,\ldots,r$. Therefore $b$ is the $\pi$-image of $a$.
\end{proof}

\section{Towards the proof of Lemma~\ref{lem:oscrings2}: small ring links}\label{sec:smallringlinks}

We will prove Lemma~\ref{lem:oscrings2} by induction on the length $n$ of $a$. Our aim in this section is to prove the lemma when $n=4$, which is the base case of the induction. As we shall see, this base case is equivalent to Claim~\ref{cla:oscrings2} at the end of the section.

The proof of this claim makes essential use of the intrinsic symmetry groups of the ring links $R(0101)$ and $R(1010)$. In Section~\ref{sub:sym} we give a brief overview of the notion of an intrinsic symmetry group, and in Section~\ref{sub:symsmallring} we identify the intrinsic symmetry groups of these two links. Finally, in Section~\ref{sub:smallring} we establish Claim~\ref{cla:oscrings2}.

\subsection{Intrinsic symmetry groups}\label{sub:sym}

% ********************************************************************************************************
% ********************************************************************************************************
% ********************************************************************************************************

Throughout this paper $\ZZ_2$ is the multiplicative group $\{-1,1\}$, and (as usual) $S_n$ is the group of all permutations of size $n$. If $K$ is any oriented knot then $(-1)\cdot K$ is $K$ with its orientation reversed, whereas $1\cdot K$ is simply $K$ with its given orientation.

In order to introduce the notion of an intrinsic symmetry of a link we refer the reader to Figure~\ref{fig:f650}. At the top of that figure we illustrate how the isotopy $\Ho$ takes the ring link $R(0101)$ to the ring link $R(1010)$. As we show at the bottom of the figure, if we ignore for a moment the orientations of the components of these links, we may consider that $\Ho$ takes the ring link $R(0101)$ {\em to itself}. 

% *********************************************************
\def\tf#1{{\Scale[1.9]{#1}}}
\def\tg#1{{\Scale[5]{#1}}}
\def\th#1{{\Scale[3]{#1}}}
\def\tj#1{{\Scale[3]{#1}}}
\begin{figure}[ht!]
\centering
\scalebox{0.5}{\input{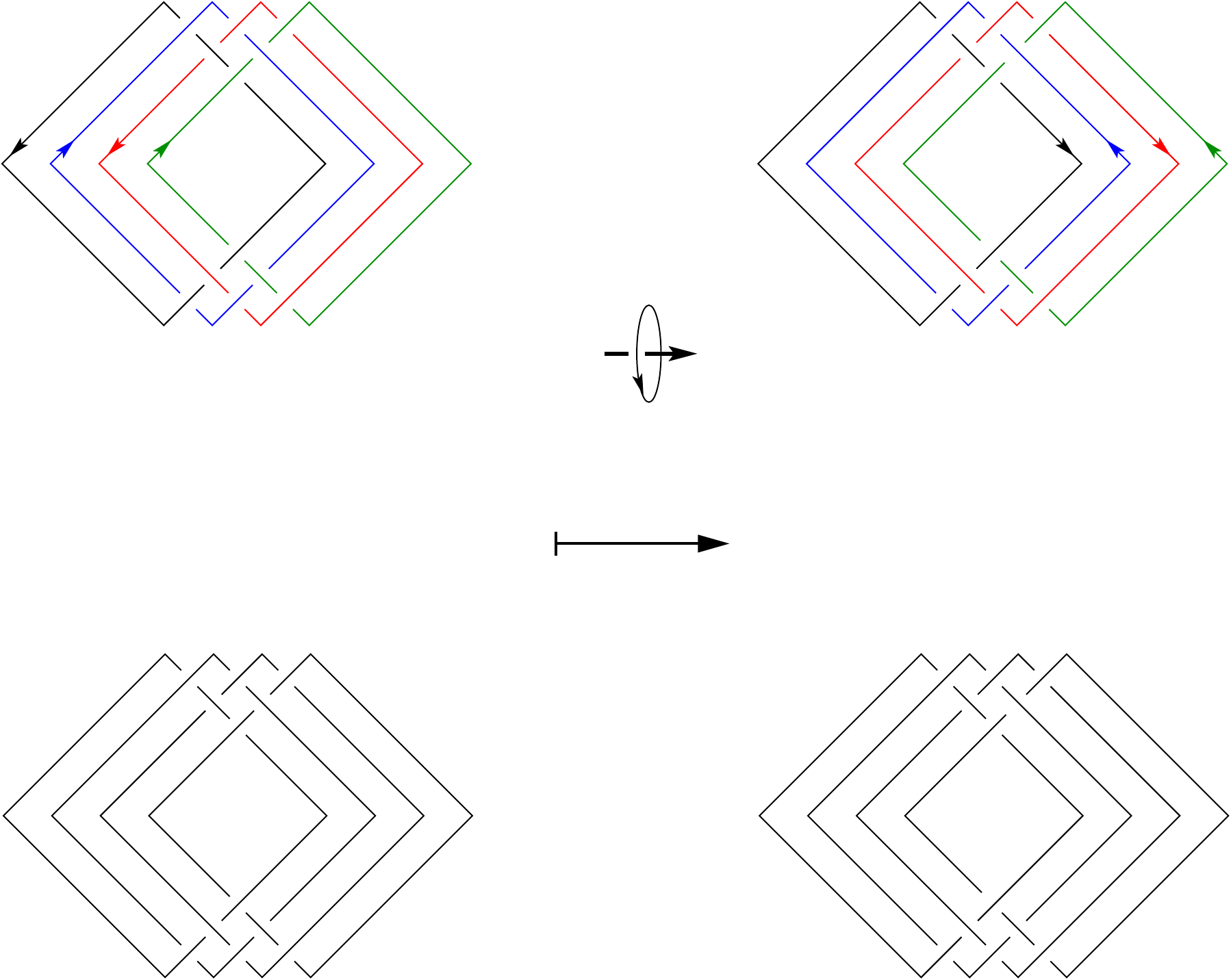_t}}
\caption{An application of the ambient isotopy $\Ho$ takes $R(0101)$ to $R(1010)$. As we show below, if we disregard for a moment the orientation of the components, it is valid to say that $\Ho$ takes $R(0101)$ to itself. Thus we may regard this action of $\Ho$ on $R(0101)$ by saying that $\Ho$ takes $R(0101)$ to itself, but reversing the orientation of all its components. Indeed, for $i=1,\ldots,4$, we have that $\Ho$ takes the $i$-th component $R(0101)_i$ of $R(0101)$ to $(-1)\cdot R(0101)_i$. Thus $\Ho$ witnesses that $R(0101)$ admits the intrinsic symmetry $(1,-1,-1,-1,-1,(1\,2\,3\,4))$.
}
\label{fig:f650}
\end{figure}
% *********************************************************

If once again we take into account the orientations of the components, we may then conclude that $\Ho$ takes the link $R(0101)$ to itself, but reversing the orientations of all its components. For instance, $\Ho$ takes the first component $R(0101)_1$ of $R(0101)$ to itself {with its orientation reversed}. That is, $\Ho$ takes $R(0101)_1$ to $(-1)\cdot R(0101)_1$. Moreover, for $i=1,\ldots,4$ we have that $\Ho$ takes $R(0101)_i$ to $(-1)\cdot R(0101)_{i}$.

In general, suppose that that $L$ is a link whose components are given in some order $L_1,\ldots,L_n$, and let $L_1^*,\ldots,L_n^*$ be the respective components of the mirror image $L^*$ of $L$. (As we further discuss below, mirror images are irrelevant in our current context, but they are still an essential part of the definition of an intrinsic symmetry of a link). Let $(\epsilon_1,\ldots,\epsilon_n) \in \ZZ_2^n$, and let $\pi$ be a permutation of $[n]$. Then,

\begin{enumerate}

\item[(1)]   $L$ {\em admits} $(1,\epsilon_1,\ldots,\epsilon_n,\pi)$ if there is an ambient isotopy that maps $L$ to itself, taking $L_i$ to $\epsilon_i \cdot L_{\pi(i)}$ for $i=1,\ldots,n$;

\item[(2)]   $L$ {\em admits} $(-1,\epsilon_1,\ldots,\epsilon_n,\pi)$ if there is an ambient isotopy that maps $L$ to $\oL$, taking $L_i$ to $\epsilon_i \cdot L_{\pi(i)}^*$ for $i=1,\ldots,n$.

\end{enumerate}

If $L$ admits $(\epsilon_0,\epsilon_1,\ldots,\epsilon_n,\pi)$ for some $\epsilon_0\in\{-1,1\}$, then $(\epsilon_0,\epsilon_1,\ldots,\epsilon_n,\pi)$ is an {\em intrinsic symmetry} of $L$. For instance, Figure~\ref{fig:f650} illustrates that $R(0101)$ admits the intrinsic symmetry $(1,-1,-1,-1,-1,(1\,2\,3\,4))$. The set of intrinsic symmetries of a link $L$ forms a group, the {\em intrinsic symmetry group} of $L$~\cite{whitten}. If $L$ has $n$ components, then the identity element of this group is the trivial intrinsic symmetry $(1,{\underbrace{1,\, \ldots,\,1}_{n\,\text{\hglue 0.1 cm $1$s}}},(1\,2$ $\cdots\,n))$.

Once again, we include (2) (that is, we involve mirror images in the discussion) because it is an integral part of the notion of an intrinsic symmetry, but in our current context it is totally irrelevant: our interest lies on positive links, and the mirror image of a positive link is not a positive link. Throughout this paper we will not encounter any intrinsic symmetry of the form $(-1,\epsilon_1,\ldots,\epsilon_n,\pi)$.

\subsection{The intrinsic symmetry groups of small oscillating ring links}\label{sub:symsmallring}

Calculating the intrinsic symmetry group of a link $L$ is in general a very difficult task~\cite{sym4010143,canta1,livingston2021intrinsic}, but if $L$ is a reasonably small hyperbolic link (see~\cite{henryweeks}) then this group can be computed using {\tt SnapPy}~\cite{SnapPy}. One uses {\tt SnapPy} to calculate the full symmetry group (that is, the mapping class group of the pair $(S^3,L)$), and from this it is easy to obtain the intrinsic symmetry group of $L$.

The links $R(0101)$ and $R(1010)$ happen to be hyperbolic, and we followed the approach described in~\cite[Section 3]{canta1} to compute the intrinsic symmetry groups of these two links using {\tt SnapPy}.

\begin{fact}\label{fac:snappyring}
The intrinsic symmetry group of $R(0101)$ is isomorphic to $\ZZ_2\times \ZZ_2$. Its elements are $(1,1,1,1,1,\identity), (1,{-}1,-1,-1,-1,\identity)$, $(1,1,1,1,1, \reverse)$, and $(1,-1,-1,-1,-1, \reverse)$. The intrinsic symmetry group of $R(1010)$ is identical. 
\end{fact}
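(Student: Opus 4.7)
The plan is to produce the four listed symmetries by direct geometric construction, and then to invoke a machine computation to rule out any others. For the constructive half I would leverage the two rotations $\Ho$ and $\Ve$ already analyzed in Section~\ref{sub:isotopiesring}. Since $\Ho$ takes $R(a)$ to $R(\Ov{a})$, and $\Ov{0101}=1010$ is the word whose underlying unoriented arrangement is identical to that of $R(0101)$, the isotopy $\Ho$ can be viewed as a self-isotopy of $R(0101)$ that preserves the indexing of components but reverses every orientation; this is exactly the symmetry $(1,-1,-1,-1,-1,\identity)$ pictured in Figure~\ref{fig:f650}. Similarly, $\Ve$ takes $R(a)$ to $R(\IO{a})$, and for $a=0101$ we have $\IO{a}=a$; moreover, the $(n{-}i{+}1)$-st component of $R(0101)$ carries the same cyclic sense as the $i$-th component after the vertical flip, so this self-isotopy contributes $(1,1,1,1,1,\reverse)$. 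Composing the two via Observation~\ref{obs:comprings} produces the fourth element $(1,-1,-1,-1,-1,\reverse)$; together with the trivial symmetry, these four elements form a subgroup isomorphic to $\ZZ_2\times\ZZ_2$.

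The remaining task is to certify that no other intrinsic symmetries exist. Because $R(0101)$ is hyperbolic, its mapping class group (equivalently, the group of self-homeomorphisms of the pair $(S^3,R(0101))$ up to isotopy) is finite and algorithmically computable. Following the recipe of Cantarella et al.~\cite[Section 3]{canta1}, I would encode $R(0101)$ as a link in \texttt{SnapPy}, compute the full symmetry group of the pair, and then project it onto the intrinsic symmetry group by recording, for each generator, only the ambient orientation sign $\epsilon_0\in\{\pm 1\}$, the component signs $\epsilon_1,\ldots,\epsilon_4\in\{\pm 1\}$, and the component permutation $\pi\in S_4$. The expected output is a group of order four, matching exactly the four elements constructed above.

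The corresponding statement for $R(1010)$ follows without any further computation: property (R1) gives $R(1010)\sim R(0101)$, and equivalent links have the same intrinsic symmetry group.

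The main obstacle here is the upper bound. A purely combinatorial or classical topological argument ruling out further symmetries would require producing, for each prospective tuple $(\epsilon_0,\epsilon_1,\ldots,\epsilon_4,\pi)$, a link invariant sharp enough to distinguish $R(0101)$ from its putative image under that symmetry; there are $2\cdot 2^4\cdot 4!=768$ tuples to consider, which is infeasible by hand. The hyperbolic computation short-circuits this entirely, exploiting the fact that a hyperbolic link complement admits only finitely many self-isometries and that \texttt{SnapPy} enumerates them explicitly.
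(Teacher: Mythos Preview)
Your proposal is correct and follows essentially the same approach as the paper: exhibit the four symmetries via the isotopies $\Ho$, $\Ve$, and their composition (together with the trivial one), and then invoke the {\tt SnapPy} computation on the hyperbolic link, following~\cite[Section 3]{canta1}, to certify that these four exhaust the intrinsic symmetry group. The only cosmetic difference is that the paper records the {\tt SnapPy} computation for $R(1010)$ separately rather than deducing it from the equivalence $R(1010)\sim R(0101)$.
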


We emphasize that we knew that $R(0101)$ and $R(1010))$ admitted these four symmetries. Indeed, the first symmetry is trivial, and the second symmetry is witnessed by $\Ho$, as illustrated in Figure~\ref{fig:f650}. It is easy to see that the isotopy $\Ve$ illustrated in Figure~\ref{fig:f550} witnesses the third symmetry, and that the fourth symmetry is witnessed by the isotopy $\Ve\circ\Ho$.

\subsection{The base case of the proof of Lemma~\ref{lem:oscrings2}}\label{sub:smallring}

The next statement is the main result in this section, which corresponds to the base case of the proof of Lemma~\ref{lem:oscrings2}. As we shall see in the next section, even though this claim is stated in terms of oscillating links, and not in terms of oscillating sublinks (as Lemma~\ref{lem:oscrings2}), this statement is indeed equivalent to the case $n=4$ of that lemma.

% ********************************************************************************************************
% ********************************************************************************************************
\begin{claim}\label{cla:oscrings2}
Let $a=a_1 a_2 a_3 a_4$ be an oscillating word of length $4$. Let $b=b_1b_2b_3b_4$ be a word such that $R(a)\sim R(b)$, and let $\ii$ be an $\isot{R(a)}{R(b)}$ isotopy. Then,
\begin{enumerate}
\item[(1)] $R(b)$ is also oscillating, that is, $b$ is an oscillating word; and
\item[(2)] the $(R(a),R(b))$-permutation under $\ii$ is either the identity permutation $\identity$ or the reverse permutation $\reverse$.
\end{enumerate}
\end{claim}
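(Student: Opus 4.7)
Since $a$ is an oscillating word of length $4$, necessarily $a \in \{0101, 1010\}$, and by Observation~\ref{obs:ifring} these two ring links are equivalent. We may therefore assume without loss of generality that $a = 0101$. The proof of (1) and (2) then relies on quite different ingredients: (2) follows rather cleanly from Fact~\ref{fac:snappyring} once (1) is in hand, so we concentrate our effort on (1).

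For (1), the task is to show that among the $2^4 = 16$ positive ring links $R(c)$ with $c \in \{0,1\}^4$, the only ones equivalent to $R(0101)$ are $R(0101)$ itself and $R(1010)$. The maps $c \mapsto \Ov{c}$, $c \mapsto \In{c}$, $c \mapsto \IO{c}$ partition these $16$ words (via Observation~\ref{obs:ifring}) into six orbits of already-equivalent ring links, represented for instance by $0000, 0001, 0010, 0011, 0101$, and $0110$. The crux of the argument --- and the main obstacle --- is to verify that no two of these six orbits collapse to a single equivalence class; in particular, that the orbit $\{0101, 1010\}$ stands alone. I would carry this out by a computer-assisted evaluation, on one representative from each orbit, of a distinguishing link invariant (the hyperbolic volume computed with \texttt{SnapPy}, in the same spirit as the computation that yields Fact~\ref{fac:snappyring}, and/or the Jones polynomial) and checking that all six values are pairwise distinct. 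Once this is established, $R(a) \sim R(b)$ forces $b$ to lie in the orbit of $a$, which is $\{0101, 1010\}$, proving (1).

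For (2), having just established (1), we may assume $b \in \{0101, 1010\}$, and exhibit an explicit isotopy $\jj : R(b) \to R(a)$ with permutation $\tau = \identity$: if $b = a$, take $\jj$ to be the trivial isotopy, and if $b = \Ov{a} = 1010$, take $\jj = \Ho$ (see Figure~\ref{fig:f560}), which is an involution with permutation $\identity$. By Observation~\ref{obs:comprings}, the composition $\jj \circ \ii : R(a) \to R(a)$ is then an orientation-preserving self-isotopy of $R(a) = R(0101)$ with permutation $\tau \circ \pi = \pi$, where $\pi$ is the $(R(a),R(b))$-permutation under $\ii$. Every orientation-preserving self-isotopy of $R(0101)$ corresponds to an element of the intrinsic symmetry group listed in Fact~\ref{fac:snappyring} with first coordinate $+1$ and with $\epsilon_i = +1$ for $i = 1, \ldots, 4$; inspection of the four elements in that fact shows that the only such elements are $(1,1,1,1,1,\identity)$ and $(1,1,1,1,1,\reverse)$. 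Hence $\pi \in \{\identity, \reverse\}$, finishing (2).
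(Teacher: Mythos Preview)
Your proposal is correct and follows essentially the same approach as the paper: for (1) you invoke a computer-assisted invariant computation (the paper uses the Jones polynomial via {\tt SageMath}) to show that $R(0101)$ is distinguished from every non-oscillating ring link of size $4$, and for (2) you appeal to Fact~\ref{fac:snappyring}. The only cosmetic difference is in the $b=\Ov{a}$ case of (2): you compose with $\Ho$ to reduce to a self-isotopy of $R(a)$ and read off the symmetry $(1,1,1,1,1,\pi)$, whereas the paper observes directly that $\ii$ itself witnesses the symmetry $(1,-1,-1,-1,-1,\pi)$; both routes land on the same conclusion from the same fact.
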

% ********************************************************************************************************

% ********************************************************************************************************
% Proof of base case of isotopies between oscillating ring links
% ********************************************************************************************************

\begin{proof}
We start by noting that (1) states that if $a\,{\in}\,\{0101,1010\}$, and $R(a){\sim}R(b)$, then $b\,{\in}\,\{0101,$ $1010\}$. We verified this using {\tt SageMath}~\cite{sagemath}: we found out that the Jones polynomials $V_{R(0101)}$ of $R(0101)$ and $V_{R(1010)}$ of $R(1010)$ are the same (this was of course expected, since these links are equivalent), and the Jones polynomial of any other ring link of size $4$ is distinct from $V_{R(0101)}$. 

To prove (2), suppose first that $b=a$. Let $\ii$ be an $\isot{R(a)}{R(a)}$ isotopy, and let $\pi$ be the $(R(a),R(a))$-permutation under $\ii$. Since $\ii$ maps each component of $R(a)$ to a component of $R(a)$ with its correct orientation, then $(1,1,1,1,1,\pi)$ must be an intrinsic symmetry of $R(a)$. Since $a$ is either $0101$ or $1010$, by Fact~\ref{fac:snappyring} we conclude that $\pi$ is either $\identity$ or $\reverse$. 

Suppose finally that $b=\Ov{a}$. Let $\ii$ be an $\isot{R(a)}{R(\Ov{a})}$ isotopy, and let $\pi$ be the $(R(a),R(\Ov{a}))$-permutation under $\ii$. Clearly, $R(\Ov{a})$ is the same as $R(a)$ but with the orientation of all its components reversed. Therefore $\ii$ maps each component of $R(a)$ to a component of $R(\Ov{a})$ with its orientation reversed, and so $(1,-1,-1,-1,-1,\pi)$ must be an intrinsic symmetry of $R(a)$. Since $a$ is either $0101$ or $1010$, by Fact~\ref{fac:snappyring}, we conclude that $\pi$ is either $\identity$ or $\reverse$.
\end{proof}

\section{Proof of Lemma~\ref{lem:oscrings2}}\label{sec:proofoscrings2}

The proof of Lemma~\ref{lem:oscrings2} relies on a natural equivalence (isotopy) between ring links and sublinks of ring links. An analogous equivalence between sublinks of boot links and boot links (respectively, between sublinks of flower links and flower links) will also play a central role in the proof of Theorem~\ref{thm:arrs} (respectively, Theorem~\ref{thm:arrf}).

% ********************************************************************************************************

\subsection{Sublinks of ring links are equivalent to ring links}\label{sub:isotopies}

Let $a=a_1\cdots a_n$ be a word, and let $a_{i_1} \cdots a_{i_k}$ be a subword of $a$. As we illustrate in Figure~\ref{fig:f210}, the special structure of the arrangement $\rr_n$ implies that the sublink $R(a)_{i_1,\ldots,i_k}$ of $R(a)$ is equivalent to the ring link $R(a_{i_1} \cdots a_{i_k})$, via a {strong} isotopy. Loosely speaking, one can ``bring together'' some components of $R(a)_{i_1,\ldots,i_k}$ until all the components are placed exactly in the same way as the components of $R(a_{i_1}\cdots a_{i_k})$. 

\def\Atg{{\tg{R(00\gl{0}1\gl{0}\gl{1})}}}
\def\tf#1{{\Scale[1.7]{#1}}}
\def\tg#1{{\Scale[1.8]{#1}}}
\def\th#1{{\Scale[3.2]{#1}}}
\def\tj#1{{\Scale[4.0]{#1}}}

% *********************************************************
\begin{figure}[ht!]
\centering
\scalebox{0.47}{\input{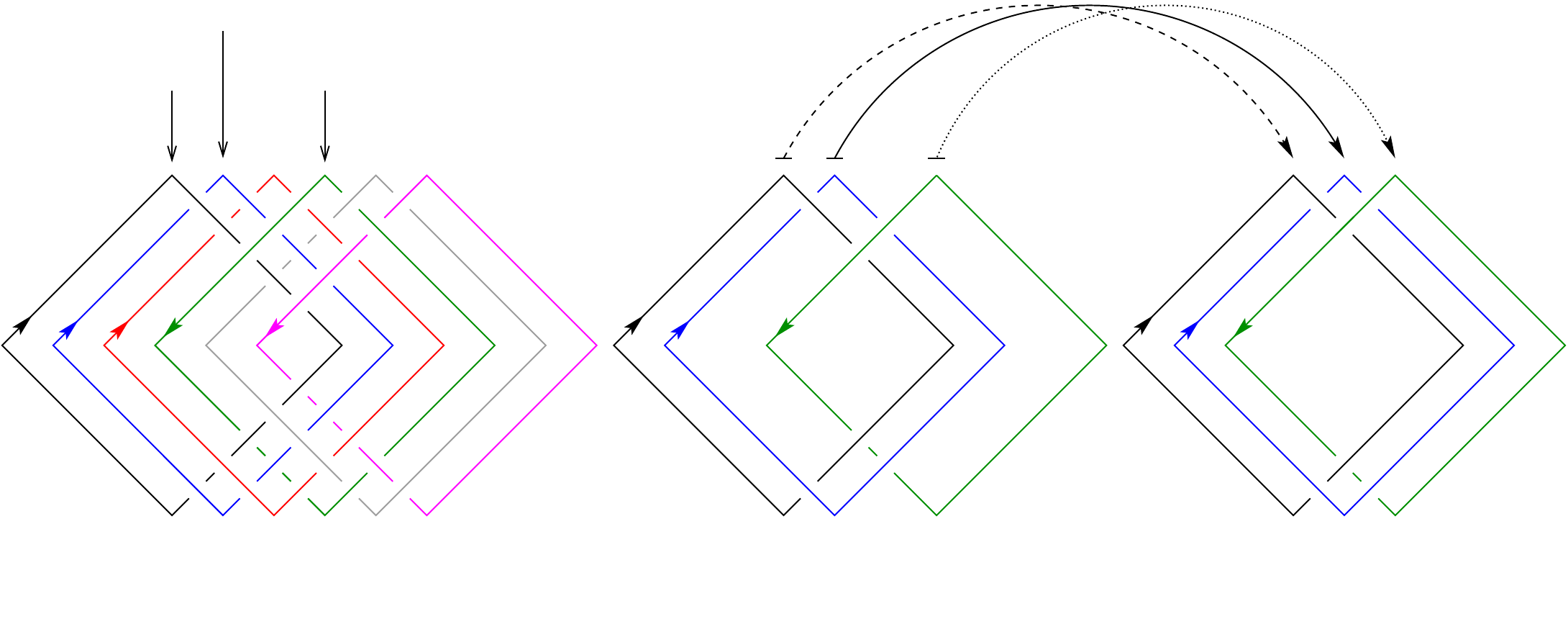_t}}
\caption{Illustration of Observation~\ref{obs:subr}. Let $a=(a_1a_2a_3a_4a_5a_6)=(000101)$, and consider the subword $a_1a_2a_4=001$ of $a$. On the left-hand side we have $R(000101)$, and we indicate the components $R(000101)_1, R(000101)_2, R(000101)_4$ corresponding to $a_1=0, a_2=0$, and $a_4=1$, respectively. At the center we have the sublink $R(000101)_{1,2,4}$ that is the union of these three components. Loosely speaking, we can ``bring together'' some components of $R(000101)_{1,2,4}$ until they match exactly the components of the ring link $R(a_1a_2a_4)=R(001)$, illustrated at the right-hand side of the figure. This is a strong $\isot{R(000101)_{1,2,4}}{R(001)}$ isotopy. Indeed, its associated permutation is the identity permutation $\identity=(1\,2\,3)$, as the isotopy takes the $i$-th component of $R(000101)_{1,2,4}$ to the $i$-th component of $R(001)$ for $i=1,2,3$.}
\label{fig:f210}
\end{figure}
% *********************************************************

\def\so#1{{\Scale[1.8]{#1}}}
\def\sp#1{{\Scale[1.2]{#1}}}
\def\sq#1{{\Scale[1.3]{#1}}}

Such an $\isot{R(a)_{i_1,\ldots,i_k}}{R(a_{i_1}\cdots a_{i_k})}$ isotopy is indeed strong, as it clearly takes the $j$-th component $R(a)_{i_j}$ of $R(a)_{i_1,\ldots,i_k}$ to the $j$-th component $R(a_{i_1} \cdots a_{i_k})_j$ of the ring link $R(a_{i_1} \cdots a_{i_k})$, for $j=1,\ldots,k$. 

Clearly, we can reverse the process of ``bringing together'' the components of $R(a)_{i_1,\ldots,i_k}$ to the components of $R(a_{i_1}\cdots a_{i_k})$. Loosely speaking, we can ``pull apart'' some components of $R(a_{i_1}\cdots a_{i_k})$ until they are all placed exactly in the same way as the components of $R(a)_{{i_1},\ldots,i_k}$. Therefore there also exists a strong $\isot{R(a_{i_1}\cdots a_{i_k})}{R(a)_{i_1,\ldots,i_k}}$ isotopy. 

Let us highlight these crucial facts.

\begin{observation}\label{obs:subr}
Let $a=a_1\cdots a_n$ be a word. If $a_{i_1}\cdots a_{i_k}$ is any subword of $a$, then there exists an $\isop{R(a)_{i_1,\ldots,i_k}}{R(a_{i_1} \cdots a_{i_k})}{\identity}$ isotopy, and there exists an $\isop{R(a_{i_1}\cdots a_{i_k})}{R(a)_{i_1,\ldots,i_k}}{\identity}$ isotopy. 
\end{observation}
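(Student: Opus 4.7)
The plan is to reduce the claim to the single-component-removal case and then exhibit an explicit planar deformation. By induction on $n - k$, it suffices to show that if $a = a_1 \cdots a_n$ is a word and $j \in [n]$, then there is a strong $\isot{R(a)_{1,\ldots,\hat j,\ldots,n}}{R(a_1\cdots\hat a_j\cdots a_n)}$ isotopy. The general case then follows by iterating and composing these single-component isotopies, using Observation~\ref{obs:comprings} to conclude that the composition is still a strong isotopy (since the composition of identity permutations is the identity).

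For the single-component case, I would exhibit an explicit planar ambient isotopy of $\mathbb{R}^2$ that takes the diagram of $\rr_n$ with the $j$-th pseudocircle removed to the standard diagram of $\rr_{n-1}$. This is possible because of the explicit, highly symmetric structure of the ring arrangement illustrated in Figure~\ref{fig:fig270}: after deleting the $j$-th pseudocircle from $\rr_n$, the remaining $n-1$ pseudocircles still pairwise intersect at exactly two points, and a straightforward planar deformation ``closes the gap'' left by the removed pseudocircle, matching the standard embedding of $\rr_{n-1}$. Such a planar isotopy lifts trivially to an ambient isotopy of $\mathbb{R}^3$, and since no crossings are created or destroyed during the deformation, the over/under information is preserved, so positive crossings remain positive and the orientation of each pseudocircle (hence the corresponding bit of the word) is preserved. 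Under this isotopy, the $i$-th component of the sublink (in the reindexing that skips $j$) maps to the $i$-th component of $R(a_1\cdots \hat a_j \cdots a_n)$, which is precisely the identity permutation with respect to the natural orderings.

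The reverse direction, namely the existence of a strong $\isot{R(a_{i_1}\cdots a_{i_k})}{R(a)_{i_1,\ldots,i_k}}$ isotopy, is immediate by inverting the isotopy constructed above; the inverse of a strong isotopy is strong, with the identity permutation again. The main difficulty is not mathematical depth but rather making the planar deformation precise: one must specify coordinates for the standard embedding of $\rr_n$ and track the deformation carefully enough to verify that the pairwise intersection structure among the remaining pseudocircles is preserved throughout. Given that the informal ``bring together'' picture is already made plain in Figure~\ref{fig:f210} and the authors chose to label this an \emph{Observation}, I expect the formal justification to stay at the pictorial level, resting on the three easily verified points: (i) removal of one pseudocircle preserves the pairwise-transversal-double-intersection property among the remaining ones, (ii) the resulting arrangement is ambient-isotopic in the plane to $\rr_{n-1}$, and (iii) positivity of crossings is a local property of the oriented over/under data, hence preserved under any such planar deformation.
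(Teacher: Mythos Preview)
Your proposal is correct and matches the paper's approach: the paper gives no formal proof at all, relying entirely on Figure~\ref{fig:f210} and the informal phrase ``bring together'' (and ``pull apart'' for the reverse direction), exactly as you anticipated in your final paragraph. Your induction on $n-k$ and reduction to the single-component-removal case add a layer of structure that the paper does not bother with, but the underlying geometric content is identical.
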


We close this section with a final observation that will be invoked repeatedly in the proof of Proposition~\ref{pro:arrr}.

\begin{observation}\label{obs:subr2}
Let $a=a_1\cdots a_n$ and $b=b_1\cdots b_n$ be words. Let $1\le i_1 < \cdots <i_k \le n$ and $1\le j_1 < \cdots < j_k \le n$ be integers, and let $\pi$ be a permutation of $[k]$. Then there exists an $\isop{R(a)_{i_1,\ldots,i_k}}{R(b)_{j_1,\ldots,j_k}}{\pi}$ isotopy if and only if there exists an $\isop{R(a_{i_1}\cdots a_{i_k})}{R(b_{j_1}\cdots b_{j_k})}{\pi}$ isotopy.
\end{observation}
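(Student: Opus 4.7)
\medskip
\noindent\textbf{Proof plan.} The statement is a direct corollary of Observation~\ref{obs:subr} (which supplies strong isotopies between sublinks of ring links and the ring links indexed by the corresponding subwords) and Observation~\ref{obs:comprings} (which tells us how permutations compose under concatenation of isotopies). The strategy is to ``sandwich'' any hypothetical isotopy between identity-permutation isotopies, so that composing with them preserves the permutation.

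For the forward direction, suppose we are given an $\isop{R(a)_{i_1,\ldots,i_k}}{R(b)_{j_1,\ldots,j_k}}{\pi}$ isotopy $\jj$. By Observation~\ref{obs:subr}, applied to $a$ with the subword $a_{i_1}\cdots a_{i_k}$, there is a strong isotopy
\[
\Isop{\alpha}{R(a_{i_1}\cdots a_{i_k})}{R(a)_{i_1,\ldots,i_k}}{\identity},
\]
and applied to $b$ with the subword $b_{j_1}\cdots b_{j_k}$, there is a strong isotopy
\[
\Isop{\beta}{R(b)_{j_1,\ldots,j_k}}{R(b_{j_1}\cdots b_{j_k})}{\identity}.
\]
By Observation~\ref{obs:comprings}, the composition $\beta\circ\jj\circ\alpha$ is an isotopy from $R(a_{i_1}\cdots a_{i_k})$ to $R(b_{j_1}\cdots b_{j_k})$ whose induced permutation is $\identity\circ\pi\circ\identity=\pi$, as required.

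The backward direction is entirely analogous: given $\Isop{\jj}{R(a_{i_1}\cdots a_{i_k})}{R(b_{j_1}\cdots b_{j_k})}{\pi}$, Observation~\ref{obs:subr} supplies strong isotopies $\Isop{\alpha'}{R(a)_{i_1,\ldots,i_k}}{R(a_{i_1}\cdots a_{i_k})}{\identity}$ and $\Isop{\beta'}{R(b_{j_1}\cdots b_{j_k})}{R(b)_{j_1,\ldots,j_k}}{\identity}$, and Observation~\ref{obs:comprings} gives that $\beta'\circ\jj\circ\alpha'$ is an $\isop{R(a)_{i_1,\ldots,i_k}}{R(b)_{j_1,\ldots,j_k}}{\pi}$ isotopy.

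There is essentially no obstacle here — the only thing to watch is bookkeeping of the direction of each isotopy and of the order of composition, so that the induced permutations combine as $\identity\circ\pi\circ\identity=\pi$ rather than picking up spurious permutations. The argument is so short that one could even state it as a one-paragraph proof; its importance lies not in depth but in the fact that it lets us freely translate between isotopy questions about sublinks $R(a)_{i_1,\ldots,i_k}$ and isotopy questions about the simpler ring links $R(a_{i_1}\cdots a_{i_k})$, which is precisely what is needed to feed the inductive base case (Claim~\ref{cla:oscrings2}) into the proof of Lemma~\ref{lem:oscrings2}.
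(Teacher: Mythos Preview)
Your proof is correct and follows essentially the same approach as the paper: sandwich the given isotopy between the strong (identity-permutation) isotopies supplied by Observation~\ref{obs:subr}, and invoke Observation~\ref{obs:comprings} to conclude that the composite has permutation $\identity\circ\pi\circ\identity=\pi$. The paper writes out only the ``if'' direction and declares the other direction analogous, whereas you spell out both; otherwise the arguments are identical.
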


\begin{proof}
We prove the ``if'' part. The proof of the ``only if'' part is totally analogous.

Suppose that there exists an $\isop{R(a_{i_1}\cdots a_{i_k})}{R(b_{j_1}\cdots b_{j_k})}{\pi}$ isotopy $\ii$. We know from Observation~\ref{obs:subr} there there exists an $\isop{R(a)_{i_1,\ldots,i_k}}{R(a_{i_1}\cdots a_{i_k})}{\identity}$-isotopy $\jj$, and the same observation implies that there exists an $\isop{R(b_{j_1}\cdots b_{j_k})}{R(b)_{j_1,\ldots,j_k}}{\identity}$ isotopy $\kk$.

Two applications of Observation~\ref{obs:comprings} yield that $\Isop{\kk\circ\ii\circ\jj}{R(a)_{i_1,\ldots,i_k}}{R(b)_{j_1,\ldots,j_k}}{\identity\circ\pi\circ\identity}$. Since $\identity\circ\pi\circ\identity=\pi$, this means that $\kk\circ\ii\circ\jj$ is an $\isop{R(a)_{i_1,\ldots,i_k}}{R(b)_{j_1,\ldots,j_k}}{\pi}$ isotopy.
\end{proof}

% ********************************************************************************************************
% ********************************************************************************************************
% ********************************************************************************************************

% ********************************************************************************************************
% ********************************************************************************************************
% ********************************************************************************************************
% ********************************************************************************************************

% ********************************************************************************************************
% ********************************************************************************************************
% ********************************************************************************************************
% ********************************************************************************************************

% ********************************************************************************************************

\subsection{Proof of Lemma~\ref{lem:oscrings2}}\label{sub:proofprooscrings2}

Even though in principle it is possible to prove Lemma~\ref{lem:oscrings2} in its given form, it happens to be easier to establish instead the following proposition, stated in terms of ring links instead of in terms of sublinks of ring links. We remark that the equivalence of this statement with Lemma~\ref{lem:oscrings2} is a consequence of Observation~\ref{obs:subr2}.

\begin{lemma}[Equivalent to Lemma~\ref{lem:oscrings2}]\label{lem:oscrings0}
Let $a=a_1 \cdots a_n$ be an oscillating word of length $n\ge 4$. Let $b$ be a word such that $R(a)\sim R(b)$, and let $\ii$ be an $\isot{R(a)}{R(b)}$ isotopy. Then,
\begin{enumerate}
\item[(1)] the $(R(a),R(b))$-permutation under $\ii$ is either the identity permutation $\identity$ or the reverse permutation $\reverse$; and
\item[(2)] $b$ is an oscillating word, that is, the ring link $R(b)$ is also oscillating.
\end{enumerate}
\end{lemma}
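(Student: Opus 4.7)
The plan is to prove Lemma~\ref{lem:oscrings0} by induction on $n$. The base case $n = 4$ is exactly Claim~\ref{cla:oscrings2}, which was established using the intrinsic symmetry groups of $R(0101)$ and $R(1010)$ computed via \texttt{SnapPy}, together with a Jones polynomial calculation distinguishing these two links from all other ring links of size $4$.

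For the inductive step, assume $n \ge 5$. The key observation is that removing either endpoint of an oscillating word leaves an oscillating word, so both $a_2\cdots a_n$ and $a_1\cdots a_{n-1}$ are oscillating of length $n-1 \ge 4$. Let $\pi$ be the $(R(a),R(b))$-permutation under $\ii$, set $k := \pi(1)$ and $l := \pi(n)$, and restrict $\ii$ to the sublinks $R(a)_{2,\ldots,n}$ and $R(a)_{1,\ldots,n-1}$. These restrictions are isotopies onto the sublinks $R(b)_{1,\ldots,k-1,k+1,\ldots,n}$ and $R(b)_{1,\ldots,l-1,l+1,\ldots,n}$ respectively, and by Observation~\ref{obs:subr2} they correspond to isotopies of ring links $R(a_2\cdots a_n) \to R(b_1\cdots b_{k-1}b_{k+1}\cdots b_n)$ and $R(a_1\cdots a_{n-1}) \to R(b_1\cdots b_{l-1}b_{l+1}\cdots b_n)$ whose permutations $\pi_1,\pi_2 \in S_{n-1}$ are obtained from $\pi$ by decrementing every value exceeding $k$ (respectively $l$) and dropping position $1$ (respectively $n$) from the domain. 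Since the source words are oscillating of length $n-1 \ge 4$, the induction hypothesis forces $\pi_1,\pi_2 \in \{\identity,\reverse\}$ on $[n-1]$ and forces both target words to be oscillating.

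It remains to combine these two conclusions to show that $\pi \in \{\identity,\reverse\}$ on $[n]$ and that $b$ itself is oscillating. Each of the four combinations for $(\pi_1,\pi_2)$ determines $\pi$ completely in terms of $k$: for instance $\pi_1 = \identity$ forces $\pi$ to be the permutation $(k,1,2,\ldots,k-1,k+1,\ldots,n)$ in one-line notation, and $\pi_1 = \reverse$ forces an analogous ``mirror cyclic shift'' around position $k$. A direct case analysis shows that the mixed cases $(\identity,\reverse)$ and $(\reverse,\identity)$ produce incompatible constraints on $\pi$ whenever $n \ge 5$; the case $(\identity,\identity)$ forces $k = 1$, $l = n$ and hence $\pi = \identity$; and the case $(\reverse,\reverse)$ forces $k = n$, $l = 1$ and hence $\pi = \reverse$. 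In either surviving case $\{k,l\} = \{1,n\}$, so the inductively oscillating subwords $b_1\cdots b_{k-1}b_{k+1}\cdots b_n$ (which equals $b_2\cdots b_n$) and $b_1\cdots b_{l-1}b_{l+1}\cdots b_n$ (which equals $b_1\cdots b_{n-1}$) together imply $b_i \ne b_{i+1}$ for every $i \in [n-1]$, proving $b$ is oscillating.

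The main obstacle will be the bookkeeping in the case analysis: cyclic-shift-like permutations of the form $(k,1,2,\ldots,k-1,k+1,\ldots,n)$ with $k\notin\{1,n\}$ arise naturally from $\pi_1 \in \{\identity,\reverse\}$, and ruling them out via the second restriction relies crucially on $n \ge 5$, since for $n = 4$ some of these spurious permutations do satisfy both of the restricted constraints, which is precisely why the $n=4$ base case requires the separate \texttt{SnapPy} argument.
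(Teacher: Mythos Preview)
Your proof is correct and follows essentially the same inductive strategy as the paper: restrict $\ii$ to the two overlapping oscillating sublinks $R(a)_{1,\ldots,n-1}$ and $R(a)_{2,\ldots,n}$, apply the induction hypothesis via Observation~\ref{obs:subr2}, and combine. The paper streamlines your case analysis by observing that the induced permutation on a sublink being $\identity$ (respectively $\reverse$) is equivalent to $\pi$ being increasing (respectively decreasing) on the corresponding index set, which immediately shows the mixed cases are incompatible and forces $\pi\in\{\identity,\reverse\}$ without tracking $k=\pi(1)$ and $l=\pi(n)$ explicitly; note also that your final remark is slightly off, since the incompatibility of the mixed cases already holds for $n\ge 4$, and the only reason $n\ge 5$ is needed is so that the induction hypothesis applies to words of length $n-1$.
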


\begin{proof}
We proceed by induction on the length $n$ of $a$. Claim~\ref{cla:oscrings2} shows that the statement is true for $n{=}4$. For the inductive step we let $m\ge 4$ be an integer, assume that the lemma holds for oscillating words of length $m$, and prove that then it holds for an oscillating word of length $m+1$. 

Thus we let $a=a_1\cdots a_m a_{m+1}$ be an oscillating word, let $b=b_1 \cdots b_{m+1}$ be a word such that $R(a)\sim R(b)$. Let $\ii$ be an $\isot{R(a)}{R(b)}$ isotopy, and let $\pi$ be the $(R(a),R(b))$-permutation under $\ii$. Our goal is to show that (I) $\pi$ is either $\identity$ of $\reverse$; and (II) $b$ is oscillating. 

Let $R(b)_{j_1,\ldots,j_{m}}$ (respectively, $R(b)_{k_1,\ldots,k_{m}}$) be the sublink of $R(b)$ that is the image of $R(a)_{1,\ldots,m}$ (respectively, $R(a)_{2,\ldots,m+1}$) under $\ii$. 

By Observation~\ref{obs:subr2}, the induction hypothesis implies that the $({R(a)_{1,\ldots,m}},{R(b)_{j_1,\ldots,j_m}})$-permuta\-tion under ${\ii}$ is either $\identity$ or $\reverse$. That is, either 
$$\hbox{(i) $\Isop{\ii}{R(a)_{1,\ldots,m}}{R(b)_{j_1,\ldots,j_m}}{\identity}$ or (ii) $\Isop{\ii}{R(a)_{1,\ldots,m}}{R(b)_{j_1,\ldots,j_m}}{\reverse}$.}$$ 
Similarly, the $({R(a)_{2,\ldots,m+1}},{R(b)_{k_1,\ldots,k_m}})$-permutation under ${\ii}$ is either $\identity$ or $\reverse$. Therefore either 
$$\hbox{(iii) $\Isop{\ii}{R(a)_{2,\ldots,m+1}}{R(b)_{k_1,\ldots,k_m}}{\identity}$ or (iv) $\Isop{\ii}{R(a)_{2,\ldots,m+1}}{R(b)_{k_1,\ldots,k_m}}{\reverse}$.}$$
Note that (i) implies that $\pi(1) < \cdots < \pi(m)$, (ii) implies that $\pi(m) > \cdots {>}\pi(1)$, (iii) implies that $\pi(2) < \cdots {<}\pi(m+1)$, and (iv) implies that $\pi(m+1) > \cdots {>}\pi(2)$. Evidently (i) and (iv) are inconsistent with each other, and (ii) and (iii) are inconsistent with each other. Therefore either (i) and (iii) hold, or (ii) and (iv) hold. 

That is, either $\pi(1) < \pi(2) <\cdots {<}\pi(m+1)$ or $\pi(m+1) > \cdots {>}\pi(2) > \pi(1)$. In the former case necessarily $\pi(\ell)=\ell$ for $i=1,\ldots,m+1$, and so $\pi=\identity$, and in the former case $\pi(\ell)=(m+1)-\ell+1$ for $i=1,\ldots,m+1$, that is, $\pi=\reverse$. Thus (I) holds.

We prove (II) under the assumption that the $(R(a),R(b))$-permutation under $\ii$ is $\reverse$. The proof for the case in which this permutation is $\identity$ is totally analogous.

Under this assumption 
$$\hbox{($\dag$) $\Isot{\ii}{R(a)_{1,\ldots,m}}{R(b)_{2,\ldots,m+1}}$ and ($\ddag$)  $\Isot{\ii}{R(a)_{2,\ldots,m+1}}{R(b)_{1,\ldots,m}}$.}$$ 
By Observation~\ref{obs:subr2}, the induction hypothesis applied to ($\dag$) implies that $R(b)_{2,\ldots,m+1}$ is oscillating, that is, 
$$\hbox{($*$) {\em $b_{2} \cdots b_{m+1}$ is an oscillating subword of $b$.}}$$ 
Similarly, Observation~\ref{obs:subr2} and the induction hypothesis applied to ($\ddag$) imply that $R(b)_{1,\ldots,m}$ is oscillating, that is, 
$$\hbox{($**$) {\em $b_1\cdots b_{m}$ is an oscillating subword of $b$.}}$$
Clearly, ($*$) and ($**$) imply that $b$ is oscillating.
\end{proof}

\section{ {Boot links:} proof of Theorem~\ref{thm:arrs}}\label{sec:proofarrs}

Throughout this paper we refer to a link in $\linksor{\bb_n}$ as a {\em positive boot link of size $n$} or simply as a {\em boot link of size $n$}, since all links under consideration are positive.

As we mentioned in Section~\ref{sec:intro}, the proof of Theorem~\ref{thm:arrs} follows a similar strategy to the one we used to prove Theorem~\ref{thm:arrr}. This is actually an understatement: with the framework we developed for the proof of Theorem~\ref{thm:arrr}, the proof of Theorem~\ref{thm:arrs} will be much shorter, as we only need to describe some adjustments we need to make in order to deal with boot links.

\subsection{Correspondence between boot links and binary words}

Similarly as with ring links, a boot link of size $n$ is naturally associated to a binary word of size $n$. Each link in $\linksor{\bb_n}$ is obtained by assigning an orientation to each of the $n$ pseudocircles in $\bb_n$, and an orientation assignment can be naturally encoded in a totally analogous manner as we did for ring links. We illustrate this in Figure~\ref{fig:f120}.

\def\so#1{{\Scale[1.4]{#1}}}
\def\sp#1{{\Scale[1.15]{#1}}}

% *********************************************************
\begin{figure}[ht!]
\centering
\scalebox{0.6}{\input{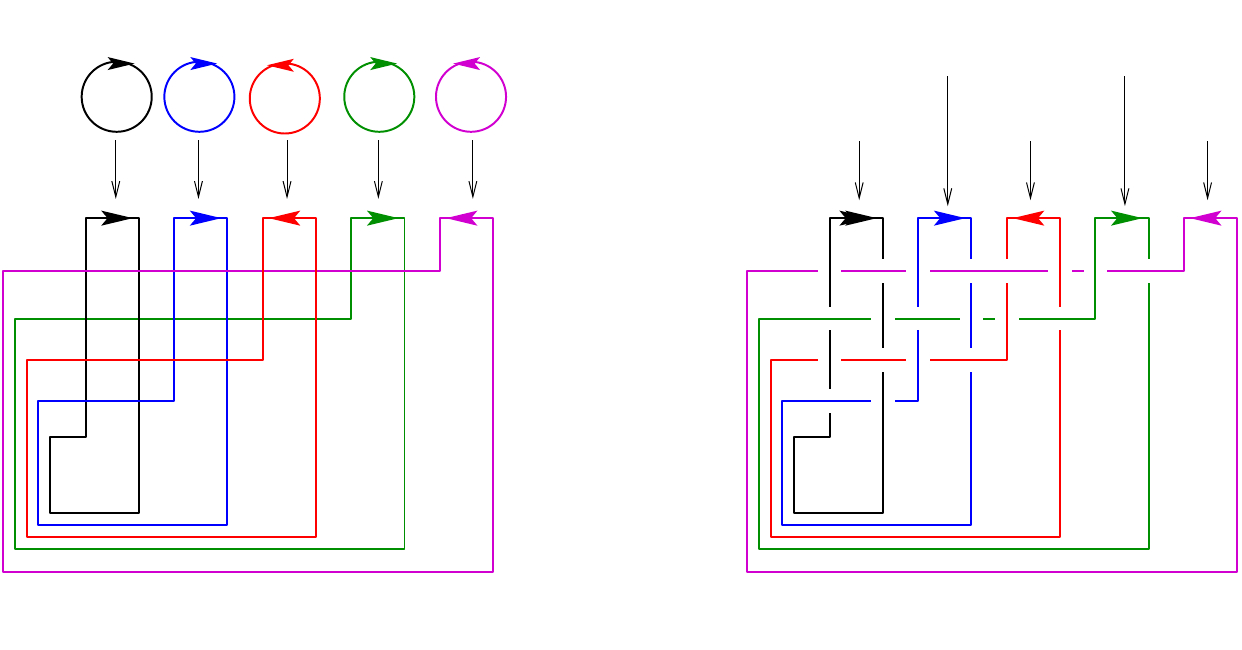_t}}
\caption{The oriented arrangement $\bb_5(00101)$ and its induced positive link, the boot link $B(00101)$. We also indicate the five components of $B(00101)$.}
\label{fig:f120}
\end{figure}
% *********************************************************

Formally, let $i=1,2,\ldots,n$ be the pseudocircles in $\bb_n$, ordered from left to right. As with ring arrangements, we encode an orientation using a word $a$ of length $n$, where the $i$-th entry of the word $a$ is $0$ (respectively, $1$) if pseudocircle $i$ is oriented clockwise (respectively, counterclockwise). Given such a word $a$, we use $\bb_n(a)$ to denote the corresponding oriented arrangement. Finally, we use $B(a)$ to denote the positive link {induced} by $\bb_n(a)$ (here $B$ stands for ``boot'').

Thus, as with ring links, {\em there is a one-to-one correspondence between the set of all binary words of length $n$ and the collection $\linksor{\bb_n}$ of all boot links of size $n$}.

\subsection{Reducing Theorem~\ref{thm:arrs} to a proposition}

Back in Section~\ref{sec:proofthmarrr} we reduced Theorem~\ref{thm:arrr} to Proposition~\ref{pro:arrr}. Here we proceed similarly, but the situation is simpler for boot links. 

In order to prove Theorem~\ref{thm:arrr} we identified some natural isotopies on ring links, culminating with Observation~\ref{obs:ifring}, which gave sufficient conditions for a ring link to be equivalent to a ring link: if $b$ is either $a,\Ov{a},\In{a}$, or $\IO{a}$, then $R(a)\sim R(b)$.

For boot links we do not need to go through that step: the equivalent to Observation~\ref{obs:ifring} for boot links would be the trivial statement that 
$$\hbox{($*$) {\em if $b=a$, then $B(a)=B(b)$}.}$$
The workhorse behind the proof of Theorem~\ref{thm:arrs} is then the following proposition, which claims that the converse of ($*$) holds, as long as the rank of $a$ is at least $6$.

\begin{proposition}[Implies Theorem~\ref{thm:arrs}]\label{pro:arrs}
Let $a$ be a word of rank {$r\ge 6$}. If $b$ is a word such that $B(a)\sim B(b)$, then $b=a$.
\end{proposition}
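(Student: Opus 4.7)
The plan is to follow the strategy used for ring links, reducing the proposition to two lemmas: a workhorse lemma (Lemma~\ref{lem:workhorseboot}) and a permutation/oscillation lemma (Lemma~\ref{lem:oscboots2}). The workhorse lemma should assert that if $\ii$ is a $B(a)\to B(b)$ isotopy and $B(a)_{i_1,\ldots,i_r}$ is an oscillating sublink of size $r=\rank{a}$ mapped by $\ii$ to $B(b)_{j_1,\ldots,j_r}$, then $b$ is the $\pi$-image of $a$, where $\pi$ is the $(B(a)_{i_1,\ldots,i_r},B(b)_{j_1,\ldots,j_r})$-permutation under $\ii$. The companion lemma should state that $\pi$ is exactly the identity $\identity$ (not merely $\identity$ or $\reverse$) and that $b_{j_1}\cdots b_{j_r}=a_{i_1}\cdots a_{i_r}$ (not merely an oscillating word of length $r$).

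I would prove the workhorse lemma by recycling the proof of Lemma~\ref{lem:workhorsering} almost verbatim: that argument used only canonical decompositions, the one-to-one correspondence between orientations and binary words (which, as Figure~\ref{fig:f120} shows, holds equally for boots), and the combinatorial fact that an oscillating sublink of size $r$ contains exactly one component of each canonical sublink. None of this depends on ring-specific geometry. I would also need the boot analog of Observations~\ref{obs:subr} and~\ref{obs:subr2} (the ``bring-together/pull-apart'' strong isotopy between $B(a)_{i_1,\ldots,i_k}$ and $B(a_{i_1}\cdots a_{i_k})$); this is geometrically evident from the structure of $\bb_n$ and is needed to run the induction inside Lemma~\ref{lem:oscboots2}.

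The crux is Lemma~\ref{lem:oscboots2}. Its conclusions are strictly stronger than the ring analog, and this is directly traceable to geometry: the boot arrangement lacks the two reflective symmetries that made $\Ho$ and $\Ve$ available on rings, since no planar rotation sends $\bb_n$ to itself. Consequently, neither the reverse permutation $\reverse$ nor the complementation $a\mapsto\Ov{a}$ arises ``for free'' from any ambient isotopy. I would prove the lemma by induction on the length $n$ of $a$. The base case handles oscillating words of length $6$, namely $B(010101)$ and $B(101010)$, via an intrinsic-symmetry-group computation in \texttt{SnapPy} as in Section~\ref{sub:symsmallring}, showing that the only intrinsic symmetry of $B(010101)$ of the form $(1,1,\ldots,1,\pi)$ is the trivial one and that no other length-$6$ word yields an equivalent link; a Jones polynomial computation analogous to the one in Claim~\ref{cla:oscrings2} rules out the non-oscillating length-$6$ boot links. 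The inductive step then mimics the proof of Lemma~\ref{lem:oscrings0}: applying the induction hypothesis separately to $B(a)_{1,\ldots,m}$ and $B(a)_{2,\ldots,m+1}$ and intersecting the two resulting conclusions forces both the permutation to be $\identity$ and the image subword to coincide with the preimage.

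Given both lemmas, Proposition~\ref{pro:arrs} follows quickly. Let $a=A_1\cdots A_r$ be the canonical decomposition of $a$. A boot analog of Corollary~\ref{cor:ringsrank} gives $\rank{b}=r$, so write $b=B_1\cdots B_r$. The workhorse lemma combined with $\pi=\identity$ yields $|B_i|=|A_i|$ for every $i$, while $b_{j_1}\cdots b_{j_r}=a_{i_1}\cdots a_{i_r}$ pins down the leading symbol of each $B_i$. Together these force $b=a$. The main obstacle is the base case of Lemma~\ref{lem:oscboots2}: establishing via \texttt{SnapPy} that the intrinsic symmetry groups of the oscillating length-$6$ boot links are as trivial as this program requires, and checking the non-equivalence of the remaining length-$6$ boot links through Jones polynomials. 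The hypothesis $r\ge 6$ reflects precisely the minimum length at which this asymmetry can be detected rigidly enough to seed the induction.
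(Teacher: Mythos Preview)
Your proposal is correct and matches the paper's approach essentially point for point: the paper reduces Proposition~\ref{pro:arrs} to Lemmas~\ref{lem:workhorseboot} and~\ref{lem:oscboots2} exactly as you describe, recycles the proof of Lemma~\ref{lem:workhorsering} for the former, and proves the latter by induction with a base case (Claim~\ref{cla:oscboots2}) established via {\tt SnapPy} (Fact~\ref{fac:snappyboot}: the intrinsic symmetry groups of $B(010101)$ and $B(101010)$ are trivial) together with a Jones polynomial computation in {\tt SageMath}. Your explanation for why the conclusions of Lemma~\ref{lem:oscboots2} are strictly stronger than in the ring case---the absence of the $\Ho$ and $\Ve$ symmetries for $\bb_n$---is also the right intuition.
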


As we did with ring links, we defer the proof of the proposition for the moment, and show that Theorem~\ref{thm:arrs} easily follows from it.

\begin{proof}[Proof of Theorem~\ref{thm:arrs} (assuming Proposition~\ref{pro:arrs})]
Let $a$ be a word of length $n$. Using standard calculations one obtains that the probability that $\rank{a}$ is less than $6$ goes to $0$ as $n\to \infty$. By Proposition~\ref{pro:arrs}, this implies that the probability that $B(a)$ is not equivalent to any other boot link goes to $1$ as $n\to\infty$.

The one-to-one correspondence between binary words of length $n$ and elements of $\linksor{\bb_n}$ then implies that the probability that the equivalence class of a random link in $\linksor{\bb_n}$ has size $1$ goes to $1$ as $n \to \infty$. Since $|\linksor{\bb_n}|=2^n$, Theorem~\ref{thm:arrs} follows. 
\end{proof}

% ******************************************************************************************************************

\section{Proof of Proposition~\ref{pro:arrs}}\label{sub:proofboots}

Similarly as in the proof of Proposition~\ref{pro:arrr}, sublinks of boot links play a central role in the proof of Proposition~\ref{pro:arrs}, so we start with a brief discussion on them.

\subsection{Sublinks of boot links are equivalent to boot links}

Let $a=a_1\ldots a_n$ be a  word, and let $i_1,\ldots,i_k$ be integers such that $1 \le i_1 < \cdots < i_k \le n$. Then $a_{i_1}\cdots a_{i_k}$ is a subword of $a$, and this subword naturally corresponds to a sublink $B(a)_{i_1}\cup \cdots \cup B(a)_{i_k}$ of $B(a)$. For brevity, we use $B(a)_{i_1,\ldots,i_k}$ to denote this link. 

We say that the boot link $B(a)$ is {\em oscillating} if the word $a$ is oscillating, and we say that the sublink $B(a)_{i_1,\ldots,i_k}$ of $B(a)$ is {\em oscillating} if $a_{i_1} \cdots a_{i_k}$ is an oscillating subword of $a$. No oscillating sublink of $B(a)$ can have size larger than the rank $r$ of $a$, since this is the length of a longest oscillating subword of $a$. 

Similarly as in the case of ring links, as we illustrate in Figure~\ref{fig:f380} the special structure of the arrangement $\bb_n$ implies that the sublink $B(a)_{i_1,\ldots,i_k}$ of $B(a)$ is equivalent to the boot link $B(a_{i_1} \cdots a_{i_k})$, via a {strong} isotopy. Loosely speaking, one can ``bring together'' some components of $B(a)_{i_1,\ldots,i_k}$ until all the components are placed exactly in the same way as the components of $B(a_{i_1}\cdots a_{i_k})$, and we can also reverse this process to take the components of $B(a_{i_1}\cdots a_{i_k})$ to $B(a)_{i_1,\ldots,i_k}$. 

\begin{observation}\label{obs:subs}
Let $a=a_1\cdots a_n$ be a word. If $a_{i_1}\cdots a_{i_k}$ is any subword of $a$, then there exists an $\isop{B(a)_{i_1,\ldots,i_k}}{B(a_{i_1} \cdots a_{i_k})}{\identity}$ isotopy, and there exists an $\isop{B(a_{i_1}\cdots a_{i_k})}{B(a)_{i_1,\ldots,i_k}}{\identity}$ isotopy. 
\end{observation}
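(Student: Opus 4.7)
The plan is to mirror the proof of Observation~\ref{obs:subr} for ring links, with $\rr_n$ replaced by $\bb_n$ throughout. The essential structural property I will use is that, for any choice of indices $1\le i_1 < \cdots < i_k \le n$, the sub-arrangement of $\bb_n$ obtained by keeping only the pseudocircles indexed by $i_1,\ldots,i_k$ is ambient-isotopic (as a planar object) to the boot arrangement $\bb_k$, via a planar isotopy that preserves both the left-to-right order of the selected pseudocircles and their inherited orientations.

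First I would establish this structural fact by inspecting the explicit construction of $\bb_n$ as depicted in Figure~\ref{fig:fig270}. In $\bb_n$ each pair of pseudocircles meets in exactly two crossings in a prescribed local ``boot'' pattern, and the global arrangement is essentially determined by the left-to-right order of the pseudocircles together with this pairwise data. Consequently, after removing the pseudocircles whose indices lie outside $\{i_1,\ldots,i_k\}$, the remaining pseudocircles can be continuously slid together via a planar ambient isotopy until they sit exactly as in $\bb_k(a_{i_1}\cdots a_{i_k})$. Next I would lift this planar deformation to a link isotopy: since every crossing in the positive link is determined solely by the underlying arrangement together with the fixed positive over/under convention, the planar isotopy extends to an ambient isotopy $\jj$ of $\mathbb{R}^3$ taking $B(a)_{i_1,\ldots,i_k}$ onto $B(a_{i_1}\cdots a_{i_k})$. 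Because the sliding preserves the left-to-right order of the selected pseudocircles, the $j$-th component $B(a)_{i_j}$ of the sublink is sent to the $j$-th component of $B(a_{i_1}\cdots a_{i_k})$ for every $j=1,\ldots,k$, so the associated permutation is $\identity$. This establishes the first claim, and the second follows immediately by taking $\jj^{-1}$.

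The main obstacle will be the structural claim about $\bb_n$. For the ring arrangement the analogous step is essentially transparent, because the pseudocircles there are arranged in a modular, nearly disjoint fashion; the boot arrangement has a more intricate nested geometry, so one must verify carefully that collapsing the gaps left by the removed pseudocircles indeed produces a faithful copy of $\bb_k$ and not some other unoriented arrangement. Once this planar fact is in hand, however, the lift to a link isotopy is routine and the remainder of the argument is essentially identical to the proof of Observation~\ref{obs:subr}.
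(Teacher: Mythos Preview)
Your proposal is correct and takes essentially the same approach as the paper: the paper too treats this observation informally, pointing to Figure~\ref{fig:f380} and the analogy with Observation~\ref{obs:subr}, and describing the isotopy as ``bringing together'' the surviving pseudocircles of $\bb_n$ until they match $\bb_k$. Your added caveat that the nested geometry of $\bb_n$ requires a bit more care than the ring case is fair, but the paper does not elaborate on this either.
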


We thus obtain the following crucial statement, which parallels Observation~\ref{obs:subr2}.

\begin{observation}\label{obs:subs2}
Let $a=a_1\cdots a_n$ and $b=b_1\cdots b_n$ be words. Let $1\le i_1 < \cdots <i_k \le n$ and $1\le j_1 < \cdots < j_k \le n$ be integers, and let $\pi$ be a permutation of $[k]$. Then there exists an $\isop{B(a)_{i_1,\ldots,i_k}}{B(b)_{j_1,\ldots,j_k}}{\pi}$ isotopy if and only if there exists an $\isop{B(a_{i_1}\cdots a_{i_k})}{B(b_{j_1}\cdots b_{j_k})}{\pi}$ isotopy.
\end{observation}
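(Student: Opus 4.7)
The plan is to mimic verbatim the proof of Observation~\ref{obs:subr2}, replacing the ring link equivalence (Observation~\ref{obs:subr}) with its boot link analogue (Observation~\ref{obs:subs}). The whole argument is a composition of three isotopies, where the two outer ones are strong (that is, their associated permutation is $\identity$) and only the middle one contributes a nontrivial permutation.

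More specifically, to prove the ``if'' direction, I would start with an $\isop{B(a_{i_1}\cdots a_{i_k})}{B(b_{j_1}\cdots b_{j_k})}{\pi}$ isotopy $\ii$, and then invoke Observation~\ref{obs:subs} twice: once to produce a strong $\isop{B(a)_{i_1,\ldots,i_k}}{B(a_{i_1}\cdots a_{i_k})}{\identity}$ isotopy $\jj$ (bringing the relevant components of $B(a)$ together so that they match the standard boot link on $k$ pseudocircles), and once to produce a strong $\isop{B(b_{j_1}\cdots b_{j_k})}{B(b)_{j_1,\ldots,j_k}}{\identity}$ isotopy $\kk$ (pulling the components of the boot link on $k$ pseudocircles apart to match the corresponding sublink of $B(b)$). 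Two applications of Observation~\ref{obs:comprings} then yield that $\kk\circ\ii\circ\jj$ is an $\isop{B(a)_{i_1,\ldots,i_k}}{B(b)_{j_1,\ldots,j_k}}{\identity\circ\pi\circ\identity}$ isotopy, and since $\identity\circ\pi\circ\identity=\pi$, this is precisely an $\isop{B(a)_{i_1,\ldots,i_k}}{B(b)_{j_1,\ldots,j_k}}{\pi}$ isotopy, as desired.

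The ``only if'' direction is totally analogous: starting with an $\isop{B(a)_{i_1,\ldots,i_k}}{B(b)_{j_1,\ldots,j_k}}{\pi}$ isotopy, one pre-composes and post-composes with the strong isotopies provided by Observation~\ref{obs:subs} (taken in the opposite directions, which Observation~\ref{obs:subs} also furnishes) to land on an $\isop{B(a_{i_1}\cdots a_{i_k})}{B(b_{j_1}\cdots b_{j_k})}{\pi}$ isotopy. There is really no obstacle here: the entire substance of the statement has already been absorbed into Observation~\ref{obs:subs}, which encodes the ``loose'' geometric fact that one can freely slide the pseudocircles of a boot arrangement together or apart without altering the link type or the order of components. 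Observation~\ref{obs:subs2} is then just the bookkeeping consequence, stating that this sliding is transparent to the question of whether two sublinks are isotopic via a prescribed permutation $\pi$.
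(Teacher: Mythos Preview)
Your proposal is correct and is exactly the approach the paper takes: the paper does not even spell out a separate proof for Observation~\ref{obs:subs2}, simply noting that it ``parallels Observation~\ref{obs:subr2}'', which is precisely the verbatim adaptation (with Observation~\ref{obs:subs} replacing Observation~\ref{obs:subr}) that you describe.
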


\def\Btg{{\tg{B(00\gl{0}1\gl{0}\gl{1})}}}
\def\tf#1{{\Scale[1.7]{#1}}}
\def\tg#1{{\Scale[1.8]{#1}}}
\def\th#1{{\Scale[3.2]{#1}}}
\def\tj#1{{\Scale[4.0]{#1}}}

% *********************************************************
\begin{figure}[ht!]
\centering
\scalebox{0.6}{\input{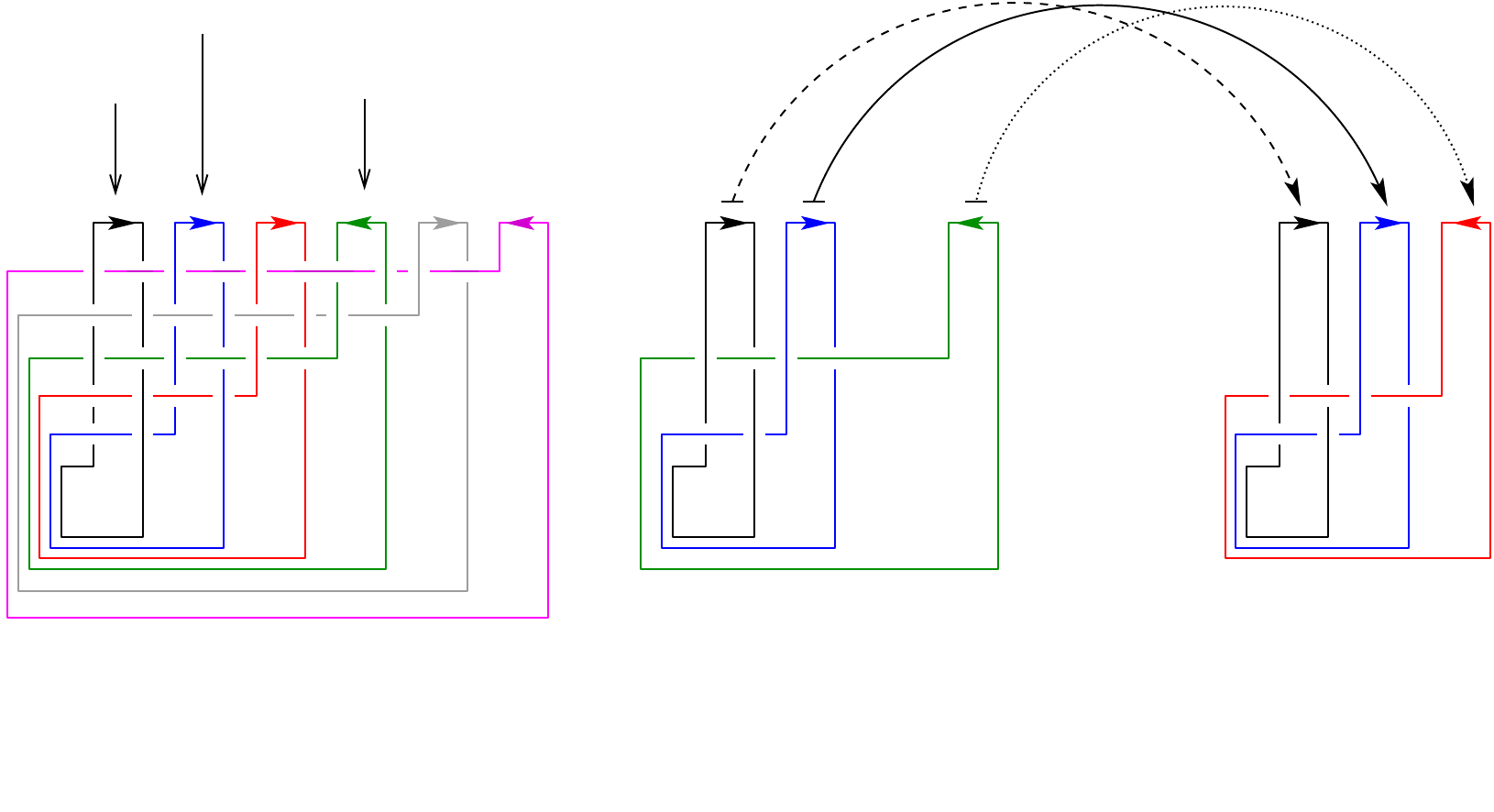_t}}
\caption{Illustration of Observation~\ref{obs:subs}.} 
\label{fig:f380}
\end{figure}
% *********************************************************

\def\so#1{{\Scale[1.8]{#1}}}
\def\sp#1{{\Scale[1.2]{#1}}}
\def\sq#1{{\Scale[1.3]{#1}}}

% ********************************************************************************************************
% *******************************************************************************************************
% ********************************************************************************************************
% ********************************************************************************************************

\subsection{Reducing Proposition~\ref{pro:arrs} to a lemma}

Back in Section~\ref{sec:proofproparrr} we reduced Proposition~\ref{pro:arrr} to Lemmas~\ref{lem:workhorsering} and~\ref{lem:oscrings2}, and these lemmas were proved in later sections. Here we proceed in a similar way. The next statements parallel these lemmas. 

As we shall see, in this case we will only need to defer the proof of Lemma~\ref{lem:oscboots2} to the next section,  as the proof of Lemma~\ref{lem:workhorseboot} is virtually identical to the proof of Lemma~\ref{lem:workhorsering}.

\begin{lemma}\label{lem:workhorseboot}
Let $a$ be a word with rank $r\ge 6$, let $b$ be a word such that $B(a)\sim B(b)$, and let $\ii$ be an $\isot{B(a)}{B(b)}$ isotopy. Let $B(a)_{i_1,\ldots,i_r}$ be an oscillating sublink of $B(a)$, and let $B(b)_{j_1,\ldots,j_r}$ be its image under $\ii$. Then $b$ is the $\pi$-image of $a$, where $\pi$ is the $(B(a_{i_1,\ldots,i_r}),B(b_{j_1,\ldots,j_r}))$-permutation under $\ii$.
\end{lemma}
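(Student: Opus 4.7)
The plan is to mirror the argument used for Lemma~\ref{lem:workhorsering} essentially verbatim, since the only structural ingredient about the arrangement we used was the fact that an oscillating sublink of size $r$ meets every canonical sublink in exactly one component, together with the preservation-of-oscillation statement. First I would derive a boot analogue of Corollary~\ref{cor:ringsrank}: applying Lemma~\ref{lem:oscboots2}(2) to any oscillating sublink of $B(a)$ of size $r$ under $\ii$ gives $\rank{b}\ge r$, and applying the same lemma to $\In{\ii}$ yields the reverse inequality, hence $\rank{b}=r$. This allows me to write canonical decompositions $a=A_1\cdots A_r=a_{p_1}^{|A_1|}\cdots a_{p_r}^{|A_r|}$ and $b=B_1\cdots B_r=b_{s_1}^{|B_1|}\cdots b_{s_r}^{|B_r|}$ exactly as in the ring case, and define the $k$-th canonical sublink $B_k(a):=B(a)_{p_k,\ldots,q_k}$ of $B(a)$ and analogously $B_k(b)$ of $B(b)$.

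Next I would record the same key combinatorial observation: a sublink of $B(a)$ of size $r$ is oscillating if and only if it contains exactly one component from each canonical sublink. In particular, $B(a)_{i_1,\ldots,i_r}$ contains exactly one component of each $B_k(a)$, namely $B(a)_{i_k}$ with $i_k\in\{p_k,\ldots,q_k\}$. Since $\pi$ is the $(B(a)_{i_1,\ldots,i_r},B(b)_{j_1,\ldots,j_r})$-permutation under $\ii$, the isotopy $\ii$ maps $B(a)_{i_k}$ to the unique component $B(b)_{j_{\pi(k)}}$ of $B(b)_{j_1,\ldots,j_r}$ that lies in $B_{\pi(k)}(b)$.

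The crux is to upgrade this single-component statement to the claim that for each $k\in[r]$, $\ii$ sends \emph{every} component of $B_k(a)$ into $B_{\pi(k)}(b)$. I would argue by contradiction: if some component $B(a)_{i'_k}$ of $B_k(a)$ were mapped by $\ii$ to a component outside $B_{\pi(k)}(b)$, then the sublink $B(a)_{i_1,\ldots,i_{k-1},i'_k,i_{k+1},\ldots,i_r}$, which is still oscillating of size $r$ since $i'_k$ is in the same canonical subword as $i_k$, would be sent by $\ii$ to a sublink of $B(b)$ containing no component of $B_{\pi(k)}(b)$, hence missing the $\pi(k)$-th canonical sublink entirely and therefore non-oscillating. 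This contradicts Lemma~\ref{lem:oscboots2}(2). Hence $|A_k|\le |B_{\pi(k)}|$ for every $k$, and summing over $k$ gives $\sum_{k=1}^r |A_k|=\sum_{k=1}^r |B_{\pi(k)}|=n$, which forces $|A_k|=|B_{\pi(k)}|$ for all $k$. This is exactly the statement that $b$ is the $\pi$-image of $a$.

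The only substantive obstacle is ensuring Lemma~\ref{lem:oscboots2} indeed holds in the form used above; assuming it does, no new geometric or topological ideas are required beyond those already developed for ring links, and the proof goes through word-for-word with $R$ replaced by $B$ and Observation~\ref{obs:subr2} replaced by Observation~\ref{obs:subs2}.
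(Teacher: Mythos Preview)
Your proposal is correct and follows essentially the same approach as the paper, which explicitly says the proof is ``virtually identical to the proof of Lemma~\ref{lem:workhorsering}'' with Corollary~\ref{cor:bootsrank} and Lemma~\ref{lem:oscboots2}(2) substituted for their ring analogues. Your inline derivation of $\rank{b}=r$ is exactly the content of Corollary~\ref{cor:bootsrank}, and your contradiction argument invoking Lemma~\ref{lem:oscboots2}(2) works since that lemma yields $b_{j'_1}\cdots b_{j'_r}=a_{i_1}\cdots a_{i'_k}\cdots a_{i_r}$, which is oscillating.
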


\begin{lemma}\label{lem:oscboots2}
Let $a=a_1 \cdots a_n$ be a word of rank $r\ge 6$, and let $B(a)_{i_1,\ldots,i_r}$ be an oscillating sublink of $B(a)$ of size $r$. Let $b$ be a word such that $B(a)\sim B(b)$, and let $\ii$ be an $(B(a),B(b))$ isotopy. Let $B(b)_{j_1,\ldots,j_r}$ be the sublink of $B(b)$ that is the image of $B(a)_{i_1,\ldots,i_r}$ under $\ii$. Then,
\begin{enumerate}
\item[(1)] the $(B(a)_{i_1,\ldots,i_r},B(b)_{j_1,\ldots,j_r})$-permutation under $\ii$ is the identity permutation $\identity$; and
\item[(2)] $b_{j_1}\cdots b_{j_r}=a_{i_1}\cdots a_{i_r}$. 
\end{enumerate}
\end{lemma}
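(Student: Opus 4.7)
The plan is to follow closely the strategy used to prove Lemma~\ref{lem:oscrings2}, with a structural simplification that reflects the asymmetry of the boot arrangement. Unlike $\rr_n$, the arrangement $\bb_n$ admits no left-right rotation, so I expect that for oscillating boot links the only admissible permutation is the identity $\identity$; this is precisely why Lemma~\ref{lem:oscboots2}\,(1) demands $\identity$ alone, whereas Lemma~\ref{lem:oscrings2}\,(1) allowed both $\identity$ and $\reverse$. Correspondingly, part~(2) asserts equality with $a_{i_1}\cdots a_{i_r}$ rather than merely being an oscillating subword.

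First, using Observation~\ref{obs:subs2}, I would reduce Lemma~\ref{lem:oscboots2} to the following equivalent statement, phrased directly in terms of oscillating boot links rather than oscillating sublinks: if $a$ is an oscillating word of length $n\ge 6$, $b$ is any word with $B(a)\sim B(b)$, and $\ii$ is an $\isot{B(a)}{B(b)}$ isotopy, then $b=a$ and the $(B(a),B(b))$-permutation under $\ii$ is $\identity$. This reduction is the verbatim analogue of the passage from Lemma~\ref{lem:oscrings2} to Lemma~\ref{lem:oscrings0}.

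I would then prove the reduced statement by induction on $n$. For the base case $n=6$, the only oscillating words are $010101$ and $101010$, and the argument is computer-assisted. Using SageMath I would first show that $V_{B(010101)}$ and $V_{B(101010)}$ each differ from the Jones polynomial of every non-oscillating boot link of size $6$, ruling out $B(010101)\sim B(b)$ and $B(101010)\sim B(b)$ when $b$ is non-oscillating. Next, after checking that $B(010101)$ and $B(101010)$ are hyperbolic, I would use SnapPy (following the approach of~\cite{canta1}) to compute their intrinsic symmetry groups. The expected outcome is that each group is trivial, containing only $(1,1,1,1,1,1,1,\identity)$. This simultaneously yields (a)~$B(010101)\not\sim B(101010)$, since any such isotopy would induce an intrinsic symmetry of $B(010101)$ with sign pattern $(1,-1,-1,-1,-1,-1,-1)$; and (b)~every $\isot{B(a)}{B(a)}$ isotopy has identity permutation.

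For the inductive step I would proceed exactly as in the proof of Lemma~\ref{lem:oscrings0}. Given $a=a_1\cdots a_{m+1}$ oscillating, $b$ with $B(a)\sim B(b)$ via $\ii$, and $\pi$ the induced permutation, I apply the induction hypothesis (via Observation~\ref{obs:subs2}) to the images of the oscillating sublinks $B(a)_{1,\ldots,m}$ and $B(a)_{2,\ldots,m+1}$ under $\ii$. Each yields the identity permutation on its image, which forces $\pi(1)<\pi(2)<\cdots<\pi(m+1)$ and hence $\pi=\identity$; the hypothesis further identifies the two image subwords of $b$ with $a_1\cdots a_m$ and $a_2\cdots a_{m+1}$ respectively, and these combine to give $b=a$. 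The main obstacle is the base case: specifically, confirming computationally that $B(010101)$ and $B(101010)$ are hyperbolic with trivial intrinsic symmetry groups. The rank threshold $r\ge 6$ is presumably tight, as oscillating boot links of smaller size are expected to admit non-identity permutation symmetries that would break the inductive argument.
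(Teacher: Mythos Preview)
Your proposal is correct and follows essentially the same approach as the paper: reduce to the oscillating-word statement via Observation~\ref{obs:subs2}, settle the base case $n=6$ with a {\tt SageMath}/{\tt SnapPy} computation (trivial intrinsic symmetry groups for $B(010101)$ and $B(101010)$), and run the same inductive argument on $B(a)_{1,\ldots,m}$ and $B(a)_{2,\ldots,m+1}$. The only minor difference is that the paper distinguishes $B(010101)$ from $B(101010)$ directly via their Jones polynomials (they turn out to be distinct), whereas you deduce this non-equivalence from the triviality of the intrinsic symmetry group; both routes work.
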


We have the following easy consequence of Lemma~\ref{lem:oscboots2}, which is the analogue of Corollary~\ref{cor:ringsrank}.

\begin{corollary}\label{cor:bootsrank}
Let $a=a_1 \cdots a_n$ be a word of rank $r\ge 6$, and suppose that $b$ is a word such that $B(a)\sim B(b)$. Then $\rank{b}=r$.
\end{corollary}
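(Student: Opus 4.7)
The plan is to mirror verbatim the proof of Corollary~\ref{cor:ringsrank}, substituting Lemma~\ref{lem:oscboots2} for Lemma~\ref{lem:oscrings2} and substituting $B$ for $R$ throughout. Write $s := \rank{b}$ and fix an $\isot{B(a)}{B(b)}$ isotopy $\ii$. Since $\rank{a} = r$, there is an oscillating subword $a_{i_1}\cdots a_{i_r}$ of $a$, and hence an oscillating sublink $B(a)_{i_1,\ldots,i_r}$ of $B(a)$ of size $r$. Apply Lemma~\ref{lem:oscboots2} to $\ii$ and this sublink: part (2) tells us that the image sublink $B(b)_{j_1,\ldots,j_r}$ under $\ii$ satisfies $b_{j_1}\cdots b_{j_r} = a_{i_1}\cdots a_{i_r}$. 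Since the right-hand side is oscillating, so is the left-hand side, which exhibits an oscillating subword of $b$ of length $r$. Therefore $s \ge r$.

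For the reverse inequality, first note that since $s \ge r \ge 6$, the word $b$ also has rank at least $6$, so Lemma~\ref{lem:oscboots2} is applicable with the roles of $a$ and $b$ swapped. Take any $\isot{B(b)}{B(a)}$ isotopy $\jj$ (for instance $\jj = \ii^{-1}$), and take an oscillating sublink $B(b)_{k_1,\ldots,k_s}$ of $B(b)$ of size $s$. Applying Lemma~\ref{lem:oscboots2}(2) to $\jj$ and this sublink yields that the image is an oscillating sublink of $B(a)$ of size $s$, so $a$ contains an oscillating subword of length $s$. Hence $r \ge s$, and combining both bounds we obtain $s = r$.

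There is really no obstacle here beyond bookkeeping: the only thing to verify is that the hypothesis $\rank{b} \ge 6$ needed to apply the lemma in the reverse direction is guaranteed by the first inequality $s \ge r \ge 6$, exactly as in the ring case (where the threshold was $4$). The proof is otherwise a direct, symmetric application of Lemma~\ref{lem:oscboots2}.
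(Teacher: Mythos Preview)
Your proof is correct and is exactly the approach the paper intends: the paper does not write out a proof for this corollary but simply labels it ``the analogue of Corollary~\ref{cor:ringsrank}'', and what you have written is precisely that analogue, with Lemma~\ref{lem:oscboots2} in place of Lemma~\ref{lem:oscrings2} and the rank threshold $6$ in place of $4$.
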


\begin{proof}[Proof of Lemma~\ref{lem:workhorseboot} (assuming Lemma~\ref{lem:oscboots2})]
This is virtually identical to the proof of Lemma~\ref{lem:workhorsering}. It suffices to invoke Corollary~\ref{cor:bootsrank} instead of Corollary~\ref{cor:ringsrank}, and to invoke Lemma~\ref{lem:oscboots2}(2) instead of Lemma~\ref{lem:oscrings2}(2).
\end{proof}

\begin{proof}[Proof of Proposition~\ref{pro:arrs} (assuming Lemma~\ref{lem:oscboots2})]
Let $a=a_1\cdots a_n$ be a word with rank $r\ge 6$. Let $b=b_1\cdots b_n$ be a word such that $B(a)\sim B(b)$, and let $\ii$ be an $(B(a),B(b))$ isotopy. Let $B(a)_{i_1,\ldots,i_r}$ be an oscillating sublink of $B(a)$, and let $B(b)_{j_1,\ldots,j_r}$ be the sublink of $B(b)$ that is the image of $B(a)_{i_1,\ldots,i_r}$ under $\ii$, and let $\pi$ be the $(B(a)_{i_1,\ldots,i_r},B(b)_{j_1,\ldots,j_r})$-permutation under $\ii$.

Let $a=A_1 \cdots A_r=a_{i_1}^{|A_1|}\cdots a_{i_r}^{|A_r|}$ be the canonical decomposition of $a$. We note that Corollary~\ref{cor:bootsrank} implies that the rank of $b$ is also $r$, and so we can let  $b=B_1 \cdots B_r=b_{j_1}^{|B_1|}\cdots b_{j_r}^{|B_r|}$ be the canonical decomposition of $b$. 

Lemma~\ref{lem:workhorseboot} implies that $b$ is the $\pi$-image of $a$, and Lemma~\ref{lem:oscboots2}(1) implies that $\pi$ is $\identity$. Therefore $|B_k|=|A_{\identity(k)}|=|A_k|$ for $k=1,\ldots,r$. By Lemma~\ref{lem:oscrings2}(2), we have that $b_{j_1}\cdots b_{j_r}=a_{i_1}\cdots a_{i_r}$, and so a glance at the canonical decompositions of $a$ and $b$ reveals that $b=a$.
\end{proof}

\section{Proof of Lemma~\ref{lem:oscboots2}}\label{sec:lemboots}

As the proof of Lemma~\ref{lem:oscrings2}, the proof of Lemma~\ref{lem:oscboots2} is by induction on $n$. In order to establish the base case we need the intrinsic symmetry groups of the two oscillating boot links of size $6$. Fortunately, these links are hyperbolic, and so we could use {\tt SnapPy} to calculate these groups. We obtained the following

\begin{fact}\label{fac:snappyboot}
The intrinsic symmetry groups of the boot links $B(010101)$ and $B(101010)$ are trivial. Therefore, if $\ii$ is an $\isot{B(010101)}{B(010101)}$ isotopy, then the $(B(010101),B(010101))$-permutation under $\ii$ is the identity permutation $\identity$. A totally analogous statements holds for $B(101010)$.
\end{fact}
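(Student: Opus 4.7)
The plan is to verify this fact computationally, following the methodology outlined in~\cite[Section 3]{canta1}, which was also used to establish Fact~\ref{fac:snappyring}. The idea is that $B(010101)$ and $B(101010)$ are hyperbolic links, so their symmetry groups can be computed effectively using \texttt{SnapPy}~\cite{SnapPy}. First, I would produce a planar diagram (PD) code for each of $B(010101)$ and $B(101010)$ directly from the definition: the underlying arrangement $\bb_6$ has an explicit realization, and once each pseudocircle is oriented according to the corresponding bit, each of the $\binom{6}{2}\cdot 2 = $ crossings of $\bb_6$ receives the unique over/under assignment that makes it positive. This gives a concrete PD code that I can feed into \texttt{SnapPy}.

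Next, I would load this code as a \texttt{Link} object, convert it to a \texttt{Manifold} (the link exterior in $S^3$), and verify that the resulting manifold is hyperbolic (for instance by checking that \texttt{volume()} returns a positive number and that \texttt{verify\_hyperbolicity()} succeeds). With hyperbolicity in hand, I would call \texttt{symmetry\_group()}, which returns the mapping class group of the pair $(S^3,L)$, together with its action on the cusps (i.e., on the components of $L$) and on the meridians (which encodes orientation data on each component and the global mirror).

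The intrinsic symmetry group is then extracted exactly as in~\cite{canta1}: each element of the full symmetry group is described by a triple consisting of a permutation of the components, a sign on each component (recording whether the meridian is preserved or reversed), and a global sign (recording whether the ambient orientation of $S^3$ is preserved or reversed). Reading off these triples gives the finite list of intrinsic symmetries, and the claim is that this list contains only the trivial element $(1,1,1,1,1,1,1,\identity)$. Once the intrinsic symmetry group is trivial, the translation to the statement about isotopies is immediate: if $\ii$ is an $\isot{B(010101)}{B(010101)}$ isotopy with induced permutation $\pi$, then the tuple $(1,\epsilon_1,\ldots,\epsilon_6,\pi)$, where $\epsilon_i$ records whether $\ii$ preserves the orientation of the $i$-th component, belongs to the intrinsic symmetry group, hence equals the identity element, forcing $\pi=\identity$ (and incidentally $\epsilon_i=1$ for all $i$). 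The same argument applies verbatim to $B(101010)$.

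The main obstacle I anticipate is not the \texttt{SnapPy} computation itself but the bookkeeping that links the combinatorial data of $\bb_6(010101)$ and $\bb_6(101010)$ to a PD code whose cusp labels in \texttt{SnapPy} match the ordering $B(\cdot)_1,\ldots,B(\cdot)_6$ used throughout the paper. A mislabeling here would still produce a valid group but would yield a permutation on a different indexing, which could cause apparent non-trivial symmetries. To guard against this, I would independently double-check the output in two ways: (a) by comparing the computed hyperbolic volume against an independent triangulation of the same link (e.g., obtained by simplifying a different PD code for the same diagram), and (b) by verifying that the four ``expected'' non-symmetries predicted by the isotopies $\Ho$ and $\Ve$ analogous to those used for ring links do \emph{not} appear, which is consistent with triviality of the group. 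With the labelings correctly aligned, the fact follows directly from the \texttt{SnapPy} output.
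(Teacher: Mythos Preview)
Your proposal is correct and matches the paper's approach: the paper simply states that these links are hyperbolic and that the intrinsic symmetry groups were computed with \texttt{SnapPy} following~\cite[Section 3]{canta1}, which is exactly the procedure you outline in more detail. Your added remarks about aligning cusp labels with the paper's component ordering and about sanity-checking against the (non-)symmetries coming from $\Ho$ and $\Ve$ are sensible practical precautions that go slightly beyond what the paper records, but the core method is the same.
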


We have the following analogue of Claim~\ref{cla:oscrings2}. This will be the base case of the inductive proof of Lemma~\ref{lem:oscboots2} (or, more precisely, for the proof of its equivalent Lemma~\ref{lem:oscboots0}).

\begin{claim}\label{cla:oscboots2}
Let $a=a_1 a_2 a_3 a_4 a_5 a_6$ be an oscillating word of length $6$. Let $b=b_1b_2b_3b_4b_5b_6$ be a word such that $B(a)\sim B(b)$, and let $\ii$ be an $\isot{B(a)}{B(b)}$ isotopy. Then,
\begin{enumerate}
\item[(1)] $b=a$; and 
\item[(2)] the $(B(a),B(b))$-permutation under $\ii$ is the identity permutation $\identity$.
\end{enumerate}
\end{claim}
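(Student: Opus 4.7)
The plan is to mirror the proof of Claim~\ref{cla:oscrings2}, since this claim is the precise boot-link analogue of that ring-link statement and the two proofs should run in parallel. Parts (1) and (2) can be established independently, by a computer algebra calculation and by invoking Fact~\ref{fac:snappyboot} respectively.

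For part (1), observe that the only oscillating words of length $6$ are $010101$ and $101010$. The plan is to use {\tt SageMath} to compute the Jones polynomial $V_{B(c)}$ for each of the $2^6=64$ boot links of size $6$, and verify that $V_{B(010101)}$ and $V_{B(101010)}$ are each distinct from $V_{B(c)}$ for every other word $c$ of length $6$, and also distinct from each other. Since the Jones polynomial is an invariant of oriented links, this would show that whenever $a\in\{010101,101010\}$ and $B(a)\sim B(b)$, necessarily $b=a$, which is precisely (1). Note that, unlike the ring case (where $R(0101)\sim R(1010)$ forced the Jones polynomials of those two links to coincide), here no symmetry of the boot arrangement interchanges $010101$ and $101010$, which makes it plausible that these two Jones polynomials will genuinely differ.

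For part (2), once $b=a$ is in hand from (1), $\ii$ becomes an $\isot{B(a)}{B(a)}$ isotopy, and we let $\pi$ denote the $(B(a),B(a))$-permutation under $\ii$. Since an ambient isotopy of oriented links preserves the orientation of every component, $\ii$ takes each $B(a)_i$ to $B(a)_{\pi(i)}$ with its orientation intact, so $B(a)$ admits the intrinsic symmetry $(1,1,1,1,1,1,1,\pi)$. Fact~\ref{fac:snappyboot} states that the intrinsic symmetry groups of $B(010101)$ and $B(101010)$ are trivial, which forces $\pi=\identity$.

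The only genuinely non-routine input is the computational verification that the Jones polynomials distinguish the two oscillating boot links of size $6$ from all other boot links of size $6$; this is the natural place where the argument could in principle fail, in which case one would have to fall back on a finer invariant such as the HOMFLY or Kauffman polynomial. Assuming the Jones calculation succeeds (as is strongly suggested by the analogous ring case), everything else is a mechanical application of Fact~\ref{fac:snappyboot} and the definition of an intrinsic symmetry.
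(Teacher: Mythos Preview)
Your proposal is correct and matches the paper's own proof essentially line for line: part (1) is handled by a {\tt SageMath} Jones polynomial computation showing that $V_{B(010101)}$ and $V_{B(101010)}$ are each distinct from the Jones polynomial of every other boot link of size $6$, and part (2) follows from Fact~\ref{fac:snappyboot} exactly as you describe. Your extra remark that the two oscillating Jones polynomials must also differ from each other is implicit in the paper's phrasing and is indeed part of what the computation verifies.
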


\begin{proof}
We start by noting that (1) states that if $a=010101$ (respectively, $a=101010$) and $B(a){\sim}B(b)$, then $b=010101$ (respectively, $b=101010$). We verified this using {\tt SageMath}: the Jones polynomial of $V_{B(010101)}$ of $B(010101)$ is different from the Jones polynomial of any other boot link of size $6$, and the same holds for the Jones polynomial $V_{B(101010)}$ of $B(101010)$.

To prove (2), let $\ii$ be an $\isot{B(a)}{B(b)}$ isotopy, and let $\pi$ be the $(B(a),B(b))$-permutation under $\ii$. We already know that $b=a$, and so $\ii$ is an $\isot{B(a)}{B(a)}$ isotopy, and $\pi$ is the $(B(a),B(a))$-permutation under $\ii$. Thus $\ii$ maps each component of $B(a)$ to a component of itself with its given orientation, and so $(1,1,1,1,1,1,1,\pi)$ must be an intrinsic symmetry of $B(a)$. Regardless of whether $a=010101$ or $101010$, Fact~\ref{fac:snappyboot} implies that $\pi$ is the identity permutation $\identity$. 
\end{proof}

% ********************************************************************************************************
% *****************************   Proof of Claim oscboots2  *****************************************
% ********************************************************************************************************

Similarly as we proceeded in the proof of Lemma~\ref{lem:oscrings2}, it turns out to be easier and more natural to establish instead the following lemma, stated in terms of boot links instead of in terms of sublinks of boot links. The equivalence of this statement with Lemma~\ref{lem:oscboots2} follows immediately from Observation~\ref{obs:subs2}.

\begin{lemma}[Equivalent to Lemma~\ref{lem:oscboots2}]\label{lem:oscboots0}
Let $a=a_1 \cdots a_n$ be an oscillating word of length $n\ge 6$. Let $b$ be a word such that $B(a)\sim B(b)$, and let $\ii$ be an $\isot{B(a)}{B(b)}$ isotopy. Then,
\begin{enumerate}
\item[(1)] the $(B(a),B(b))$-permutation under $\ii$ is the identity permutation $\identity$; and
\item[(2)] $b=a$.
\end{enumerate}
\end{lemma}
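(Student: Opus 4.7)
My plan is to mimic the inductive proof of Lemma~\ref{lem:oscrings0}, but with the situation simplified by the fact that the intrinsic symmetry groups of $B(010101)$ and $B(101010)$ are trivial (Fact~\ref{fac:snappyboot}), so only the identity permutation $\identity$ appears (no need to track a reverse alternative). The induction is on the length $n$ of $a$, with base case $n=6$ handled by Claim~\ref{cla:oscboots2}.

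For the inductive step, let $m\ge 6$ and suppose the lemma holds for all oscillating words of length $m$. Let $a=a_1\cdots a_m a_{m+1}$ be an oscillating word, let $b=b_1\cdots b_{m+1}$ satisfy $B(a)\sim B(b)$, and let $\ii$ be an $\isot{B(a)}{B(b)}$ isotopy with associated $(B(a),B(b))$-permutation $\pi$. Consider the two oscillating sublinks $B(a)_{1,\ldots,m}$ and $B(a)_{2,\ldots,m+1}$ of size $m$, and let $B(b)_{j_1,\ldots,j_m}$ and $B(b)_{k_1,\ldots,k_m}$ denote their respective images under $\ii$. By Observation~\ref{obs:subs2}, the restriction of $\ii$ to each of these sublinks corresponds to an isotopy between boot links of size $m$ on oscillating words, so the induction hypothesis applies to both.

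The induction hypothesis gives: (a) the $(B(a)_{1,\ldots,m},B(b)_{j_1,\ldots,j_m})$-permutation under $\ii$ is $\identity$ and $b_{j_1}\cdots b_{j_m}=a_1\cdots a_m$; and (b) the $(B(a)_{2,\ldots,m+1},B(b)_{k_1,\ldots,k_m})$-permutation under $\ii$ is $\identity$ and $b_{k_1}\cdots b_{k_m}=a_2\cdots a_{m+1}$. From (a), $\pi(\ell)=j_\ell$ for $\ell=1,\ldots,m$, so $\pi(1)<\pi(2)<\cdots<\pi(m)$; from (b), $\pi(\ell+1)=k_\ell$ for $\ell=1,\ldots,m$, so $\pi(2)<\pi(3)<\cdots<\pi(m+1)$. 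Splicing these two chains, $\pi(1)<\pi(2)<\cdots<\pi(m+1)$; since $\pi$ is a permutation of $[m+1]$, this forces $\pi=\identity$. This proves (1). Consequently $j_\ell=\ell$ and $k_\ell=\ell+1$, so (a) yields $b_\ell=a_\ell$ for $\ell=1,\ldots,m$, while (b) yields $b_{\ell+1}=a_{\ell+1}$ for $\ell=1,\ldots,m$; together these give $b=a$, proving (2).

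The main obstacle in this scheme is packaged into the base case, Claim~\ref{cla:oscboots2}: one needs both the Jones-polynomial distinguishability of $B(010101)$ and $B(101010)$ from every other boot link of size $6$ (which rules out $b\ne a$ when $a$ is oscillating of length $6$) and the triviality of their intrinsic symmetry groups (which forces $\pi=\identity$). Given these, the inductive argument is purely combinatorial: the two overlapping length-$m$ oscillating sublinks cover all $m+1$ components and share $m-1$ of them, and the fact that the induction hypothesis delivers $\identity$ (rather than $\identity$ or $\reverse$) makes the splicing of the two monotone chains immediate, removing the case analysis that was necessary in the ring setting.
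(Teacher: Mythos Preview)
Your proof is correct and follows essentially the same approach as the paper's own proof: induction on $n$ with base case $n=6$ given by Claim~\ref{cla:oscboots2}, the same two overlapping oscillating sublinks $B(a)_{1,\ldots,m}$ and $B(a)_{2,\ldots,m+1}$, and the same splicing of the two monotone chains to force $\pi=\identity$ and then $b=a$. Your write-up is in fact slightly tidier, as you extract both the permutation and the subword equalities from the induction hypothesis in one pass rather than two.
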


\begin{proof}
The proof is an easier version of the proof of Lemma~\ref{lem:oscrings0}. We proceed by induction on the length $n$ of $a$. Claim~\ref{cla:oscboots2} shows that the statement is true for $n{=}6$. For the inductive step we let $m\ge 6$ be an integer, assume that the lemma holds for oscillating words of length $m$, and prove that then it holds for an oscillating word of length $m+1$. 

Thus we let $a=a_1\cdots a_m a_{m+1}$ be an oscillating word, and let $b=b_1 \cdots b_{m+1}$ be a word such that $B(a)\sim B(b)$. Let $\ii$ be an $\isot{B(a)}{B(b)}$ isotopy, and let $\pi$ be the $(B(a),B(b))$-permutation under $\ii$. We must show that 
$$\hbox{$\pi=\identity$ and $b=a$.}$$

Let $B(b)_{j_1,\ldots,j_{m}}$ (respectively, $B(b)_{k_1,\ldots,k_{m}}$) be the sublink of $B(b)$ that is the image of $B(a)_{1,\ldots,m}$ (respectively, $B(a)_{2,\ldots,m+1}$) under $\ii$. 

By Observation~\ref{obs:subs2}, the induction hypothesis implies that the $({B(a)_{1,\ldots,m}},{B(b)_{j_1,\ldots,j_m}})$-permuta\-tion under ${\ii}$ is $\identity$. This implies in particular that $\pi(1) < \cdots < \pi(m)$. Similarly, the induction hypothesis implies that the $({B(a)_{2,\ldots,m+1}},{B(b)_{k_1,\ldots,k_m}})$-permutation under ${\ii}$ is $\identity$. This implies in particular that $\pi(2) < \cdots {<}\pi(m+1)$. Therefore $\pi(1) < \pi(2) <\cdots {<}\pi(m+1)$, and so necessarily $\pi(\ell)=\ell$ for $\ell=1,\ldots,m+1$. In other words, $\pi$ is the identity permutation $\identity$. Thus (I) holds.

To prove (2), note that we have in particular we have proved that 
$$\hbox{($\dag$) $\Isot{\ii}{B(a)_{1,\ldots,m}}{B(b)_{1,\ldots,m}}$ and ($\ddag$)  $\Isot{\ii}{B(a)_{2,\ldots,m+1}}{B(b)_{2,\ldots,m+1}}$.}$$
By Observation~\ref{obs:subr2}, the induction hypothesis applied to ($\dag$) implies that $b_1\cdots b_m=a_1 \cdots a_m$. Similarly, Observation~\ref{obs:subr2} and the induction hypothesis applied to ($\ddag$) imply that $b_2\cdots b_{m+1}=a_2\cdots a_{m+1}$. Evidently, (I) and (II) imply that $b_1\cdots b_{m+1}=a_1\cdots a_{m+1}$, that is, $b=a$.
\end{proof}
% ********************************************************************************************************

% *********************************************************
% *********************************************************
% *********************************************************
% *********************************************************
% *********************************************************
% *********************************************************
% *********************************************************
% *********************************************************
% *********************************************************
% *********************************************************
% *********************************************************

%             F L O W E R     L I N K S

% *********************************************************
% *********************************************************
% *********************************************************
% *********************************************************
% *********************************************************
% *********************************************************
% *********************************************************
% *********************************************************
% *********************************************************
% *********************************************************
% *********************************************************
% *********************************************************

\section{{Flower links:} proof of Theorem~\ref{thm:arrf}}\label{sec:proofarrff}

We shall refer to a link in $\linksor{\ff_n}$ as a {\em positive flower link of size $n$} or simply as a {\em flower link of size $n$}, since all links under consideration are positive. We prove Theorem~\ref{thm:arrf} using the same strategy used to prove Theorems~\ref{thm:arrr} and~\ref{thm:arrs}. 

For convenience, we will deal exclusively with flower links of {\rm even} size. We could in principle also consider flower links of odd size, but this would imply some additional case analyses in several important statements, and so it turns out to be easier to focus on flower links of even size. We emphasize that this has no impact in the validity of Theorem~\ref{thm:arrf}, as it is easy to see that (due to its asymptotic character) if the theorem holds for even values of $n$, then it also holds when $n$ is odd {(see Proposition \ref{pro:arrf} and Corollary \ref{pro:arrf})}. 

\vglue 0.3 cm
\noindent{\bf Remark. }{\sl Whenever we work with a flower link, it will be implicitly assumed that it has an {\rm even} number of components.}
\vglue 0.3 cm

\subsection{Correspondence between flower links and binary words}

Similarly as with ring links and with boot links, we can naturally associate a flower link of size $n$ to a binary word of size $n$. Each link in $\linksor{\ff_n}$ is obtained by assigning an orientation to each of the $n$ pseudocircles in $\ff_n$, and an orientation assignment can be naturally encoded similarly as we did for ring links and for boot links. 

We note that in this case we need to come up with a convention: the components of a ring arrangement and the components of a boot arrangement are naturally ordered from left to right, but the components of a flower arrangement appear in a cyclic fashion. We refer the reader to Figure~\ref{fig:f480} to illustrate this discussion. A flower arrangement with an even number $n$ pseudocircles has two topmost pseudocircles. Thus we can label (order) the pseudocircles $1,\ldots,n$ starting with its {\em topmost right} component and following a clockwise order, as shown for the arrangement at the top of Figure~\ref{fig:f480}. 

\def\so#1{{\Scale[1.4]{#1}}}
\def\sp#1{{\Scale[1.15]{#1}}}
\def\sr#1{{\Scale[1.2]{#1}}}
\def\st#1{{\Scale[0.9]{#1}}}
\def\tf#1{{\Scale[1.4]{#1}}}

% *********************************************************
\begin{figure}[ht!]
\centering
\scalebox{0.8}{\input{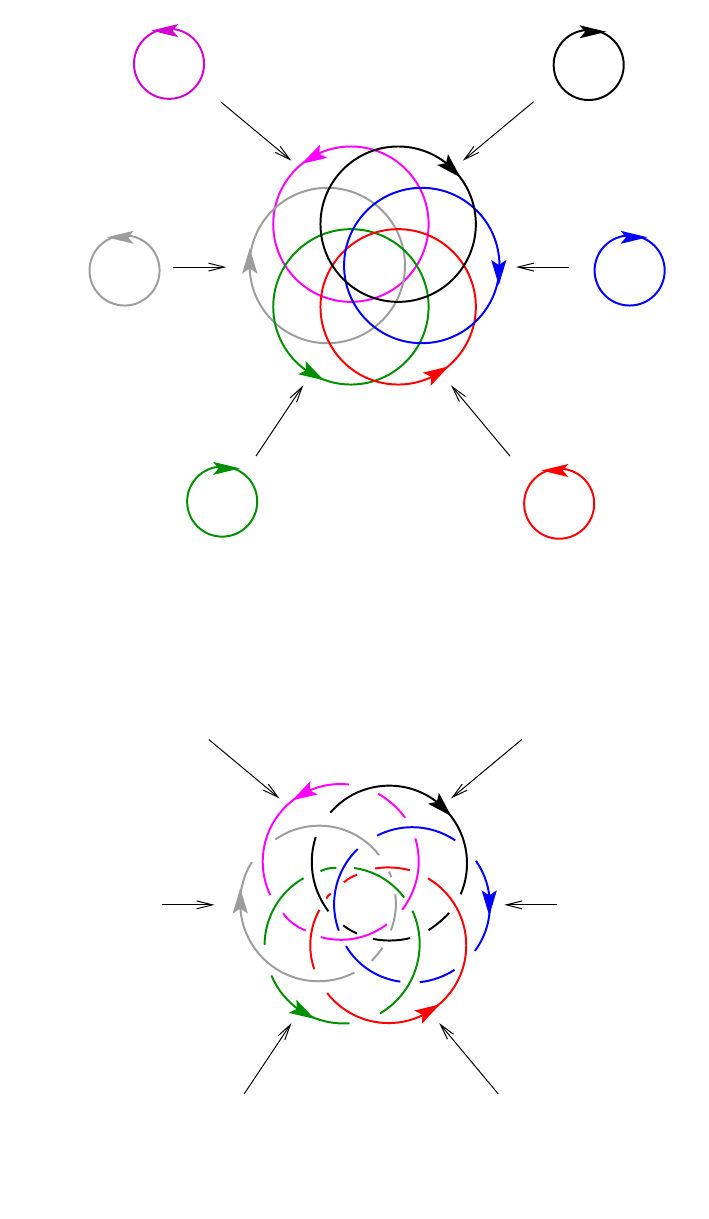_t}}
\caption{The oriented arrangement $\ff_5(001101)$ and its induced link, the flower link $F(001101)$. We indicate the six components of $F(001101)$.}
\label{fig:f480}
\end{figure}
% *********************************************************

Formally, let $i=1,2,\ldots,n$ be the pseudocircles in $\ff_n$, ordered in a naturally clockwise cyclic fashion, starting at the topmost right pseudocircle. As with ring arrangements and boot arrangements, we encode an orientation using a word $a$ of length $n$, where the $i$-th entry of the word $a$ is $0$ (respectively, $1$) if pseudocircle $i$ is oriented clockwise (respectively, counterclockwise). Given such a word $a$, we use $\ff_n(a)$ to denote the corresponding oriented arrangement. Finally, we use $F(a)$ to denote the positive link {induced} by $\ff_n(a)$ (here $F$ stands for ``flower''). We refer the reader again to Figure~\ref{fig:f480} for an illustration. As we also illustrate in that figure, for each $i=1,\ldots,n$ we use $F(a)_i$ to denote the $i$-th component of the flower link $F(a)$.

Thus, as with ring links and boot links, {\em there is a one-to-one correspondence between the set of all binary words of length $n$ and the collection $\linksor{\ff_n}$ of all flower links of size $n$}.

\subsection{Isotopies that act naturally on flower links}\label{sub:isotopiesflower}

We recall that, at a high level, the proof of Theorem~\ref{thm:arrr} had two main steps. Given a ring link $R(a)$, the easy step was to exhibit isotopies that proved that $R(a)\sim R(\Ov{a}), R(a)\sim R(\In{a})$, and $R(a)\sim R(\IO{a})$ (evidently, there is no need to exhibit an isotopy that takes $R(a)$ to itself). This part culminated with Observation~\ref{obs:ifring}, and was by far the easier step of the proof: {\em if $b$ is either $a,\Ov{a},\In{a}$, or $\IO{a}$, then $R(a)\sim R(b)$}. The difficult part was the converse statement, namely Proposition~\ref{pro:arrr}: {\em if $R(a)\sim R(b)$, then $b$ is either $a,\Ov{a},\In{a}$, or $\IO{a}$}.

Here the strategy is totally analogous. Given a flower link $F(a)$, we start by exhibiting isotopies that prove that $F(a)$ is equivalent to a certain collection of flower links. This is the goal of the current subsection, whose main result is Observation~\ref{obs:ifflower}. The more difficult part will be stated in Proposition~\ref{pro:arrf}, which plays the role that Proposition~\ref{pro:arrr} played for ring links.

\subsubsection{The isotopy $\Ro_{2\pi/n}$, the word mapping $\ro$, and the permutation $\rot{n}$}

Let us then exhibit some natural isotopies that take a flower link to a flower link. As we illustrate in Figure~\ref{fig:f490} for the case $n=6$, we use $\Ro_{2\pi/n}$ to denote an isotopy that clockwise rotates an angle of $360^{\circ}/n$ a diagram along an axis perpendicular to the {sheet}. (We mostly use degrees in our discussions, but we find it more natural to use $2\pi/n$ as a subscript of $\Ro$, rather than $360^{\circ}/n$). 

\def\sz#1{{\Scale[1.25]{#1}}}
\def\Ctg{{\sz{(360^{\circ}/6=60^{\circ} \text{\rm \hglue 0.15cm in this example})}}}
\def\Dtg{{\sz{(\Ro_{2\pi/6} \text{\rm \hglue 0.15cm in this example})}}}
\def\th#1{{\Scale[2.2]{#1}}}

% *********************************************************
\begin{figure}[ht!]
\centering
\scalebox{0.8}{\input{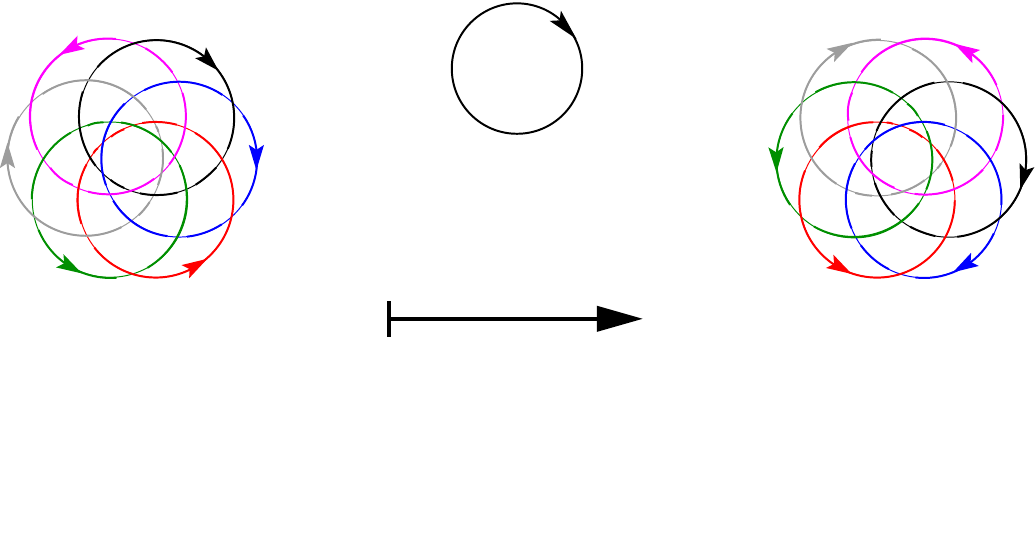_t}}
\caption{For each positive integer $n$, the isotopy $\Ro_{2\pi/n}$ clockwise rotates an angle of $2\pi/n$ a diagram along an axis perpendicular to the page. This isotopy takes a flower link $F(a_1 a_2\cdots a_{n-1} a_n)$ to the flower link $F(\ro(a_1 a_2 \cdots a_{n-1} a_n))=F(a_n a_1 a_2 \cdots a_{n-1})$, and the $(F(a_1 a_2\cdots a_{n-1} a_n),F(a_n a_1 a_2 \cdots a_{n-1}))$-permutation under $\Ro_{2\pi/n}$ is $\rot{n}=(2\,3\,\cdots\,n\,1)$.  Here we illustrate $\Ro_{2\pi/6}$, which takes $F(001101)$ to $F(100110)$. The $(F(001101),F(100110))$-permutation under $\Ro_{2\pi/6}$ is indeed $\rot{6}=(2\,3\,4\,5\,6\,1)$, as it takes the $i$-th component of $F(001101)$ to the $(i\opi{6} 1)$-st component of $F(100110)$, for $i=1,\ldots,6$.}
\label{fig:f490}
\end{figure}
% *********************************************************

As we show in Figure~\ref{fig:f490}, if apply $\Ro_{2\pi/6}$ to $F(001101)$ then we obtain $F(101101)$. That is, $\Isot{\Ro_{2 \pi/6}}{F(001101)}{F(100110)}$. It is easy to see that if we apply $\Ro_{2\pi/6}$ to any flower link with $6$ components $F(a_1a_2a_3a_4a_5a_6)$, we obtain the flower link $F(a_6 a_ 1a_2 a_3 a_4 a_5)$. That is, ${\Ro_{2\pi/6}} \bigl| {F(a_1a_2a_3a_4a_5a_6)}\to$ ${F(a_6a_1a_2a_3a_4a_5)}$. 

In general, if for some positive integer $n$  we apply $\Ro_{2\pi/n}$ to a flower link with $n$ components $F(a_1a_2 \cdots a_{n-1} a_n)$, we obtain the flower link $F(a_n a_ 1a_2 \cdots a_{n-1})$. Thus $\Ro_{2\pi/n} \,\bigl|\, {F(a_1a_2\cdots a_{n-1}a_n)}$ $\to {F(a_na_1a_2\cdots a_{n-1})}$. 

Motivated by the action of $\Ro_{2\pi/n}$ on a flower link $F(a_1\cdots a_n)$, we define a mapping $\ro$ on binary words by the following rule: if $a=a_1 a_2 \ldots,a_n$ is a word, then $\ro(a)=a_2 \cdots a_n a_1$. (Here $\ro$ stands for ``rotation'', to capture the action of $\Ro_{2\pi/n}$). Thus, regardless of the value of $n$, we have that if $a$ is a word of length $n$, then $\Ro_{2\pi/n}  \,\bigl|\, {F(a)} \to F(\ro(a))$. 

If $a$ is any word of length $n$, then $\Ro_{2\pi/n}$ takes $F(a)_{i}$ to $F(\ro{a})_{i+1}$ for $i=1,\ldots,n-1$, and it takes $F(a)_{n}$
 to $F(\ro{a})_1$. Thus the $(F(a),F(\ro{a}))$-permutation under $\Ro_{2\pi/n}$ is $(2\,3\,\cdots\, n\,\,1)$. 

We shall use $\rot{}$ to denote the permutation $(2\,3\,\cdots\, n\,\,1)$ on a set $[n]$. (This is called the {\em cyclic shift permutation} in some contexts). With this notation we have that for any positive integer $n$, the $(F(a),F(\ro{a}))$-permutation under $\Ro_{2\pi/n}$ is $\sigma$. Thus we have the following expression, which brings together the isotopy $\Ro_{2\pi/n}$, the word mapping $\ro$, and the cyclic shift permutation $\sigma$, displaying the interplay between them:
\[\Isop{\Ro_{2\pi/n}}{F(a)}{F(\ro(a))}{\sigma}.
\]

Many of our arguments involve $\rot{}$ or, more generally, $\rot{}^{s}=(s{+1}\,\,\, s{+}2\,\,\,\cdots\,\, n \,\,\,1\,\,\,2\,\,\cdots \,s)$ (that is, the composition of $\rot{}$ with itself $s$ times) for some positive integer $s$. Describing the action of $\rot{}^{s}$ in detail gets a bit awkward: $\rot{}^{s}(i)=s+i$ if $s+i\le n$, and it is $n-(s+i)+2$  if $s+i > n$. This action gets a lot simpler to express using the following piece of notation, very closely related to the sum modulo $n$ of two integers $p,q$.

\vglue 0.3 cm
\noindent{\bf Notation. }{\sl If $n,p,q$ are positive integers, we define $p\opi{n} q=(p\,{+}\,q) \,\,\text{\rm mod}\,n$ if $p+q \not\equiv 0 \,\text{\rm mod}\,{n}$, and $p\opi{n} q= n$ if $p+q\equiv 0 \,\text{\rm mod}\,{n}$.}
\vglue 0.3 cm

This notation gives a cleaner way to describe the action of $\rot{}^s$ for any positive integer $s$, as $\rot{}^{s}(i)=i\opi{n} s$ for $i=1,\ldots,n$. 

With this notation, the action of $\Ro_{2\pi/n}$ thus gets captured as follows: {\em If $a$ is any word of length $n$, then $\Ro_{2\pi/n}$ takes $F(a)_{i}$ to $F(\ro{a})_{i\opi{n} 1}$ for $i=1,\ldots,n$.}

For each nonnegative integer $s$ we let $\Ro^s_{2\pi/n}$ denote the isotopy that results by iteratively applying $s$ times the isotopy $\Ro_{2\pi/n}$. Clearly, $\Ro^s_{2\pi/n} = \Ro_{s\cdot 2\pi/n}$. Therefore, if $a=a_1\cdots a_n$ then ${\Ro^s_{2\pi/n}}\,\bigl|{F(a_1 \cdots a_n)}\to$ ${F(a_{n-s+1} a_{n-s+2} \cdots a_n a_1 a_2 \cdots a_{n-s})}$. 

We let $\ro^s$ be the mapping that results by iteratively applying $\ro$ a total of $s$ times, so that $\Isot{\Ro^s_{2\pi/n}}{F(a)}{F(\ro^s(a))}$. Note that $\Ro^s_{2\pi/n}$ takes $F(a)_i$ to $F(\ro^{s}(a))_{i\opi{n}s}$ for $i=1,\ldots,n$, and that the $(F(a),F(\ro^s(a)))$-permutation under $\Ro^s_{2\pi/n}$ is $\rot{n}^s$. 

It seems worth remarking that for any positive integer $p$ the isotopy $\Ro^{p\cdot n}_{2\pi/n}=\Ro_{p\cdot n\cdot 2\pi/n}$ is a trivial isotopy that leaves every $n$-component flower link $F(a_1\cdots a_n)$ unchanged. Also note that, correspondingly, $\ro^{p\cdot n}(a)=a$, and that (if $\sigma$ is acting on $[n]$) $\rot{n}^{p\cdot n}$ is the identity permutation $\identity$.

\subsubsection{The isotopy $\Ve$ and the word mapping $\ve$}

The second and last isotopy we use in the current context of flower links is the isotopy $\Ve$ that we used in the proof of Theorem~\ref{thm:arrr}. This isotopy will have a similar effect to the one that $\Ve$ had on ring links. Indeed, as we illustrate in Figure~\ref{fig:f520}, if we apply $\Ve$ to a flower link $F(a)=F(a_1\cdots a_n)$ we obtain the flower link $F(\IO{a})=F(\IO{a_1\cdots a_n})=F(\Ov{a_n \cdots a_1})$. 

With this motivation we define a mapping $\ve$ on binary words, letting $\ve(a)=\IO{a}$ for every word $a$. (Here $\ve$ is meant to remind us of ``$\Ve$''). Thus for any word $a$ we have that $\Isot{\Ve}{F(a)}{F(\ve(a))}$. We note that, similarly as with ring links, the $(F(a),F(\ve({a})))$-permutation under $\Ve$ is the reverse permutation $\reverse$. Therefore $\Isop{\Ve}{F(a)}{F(\ve(a))}{\reverse}$.

\def\Etg{{\tf{F(\ve(001101))=F(\IO{001101})= F(010011)}}}

% *********************************************************
\begin{figure}[ht!]
\centering
\scalebox{0.8}{\input{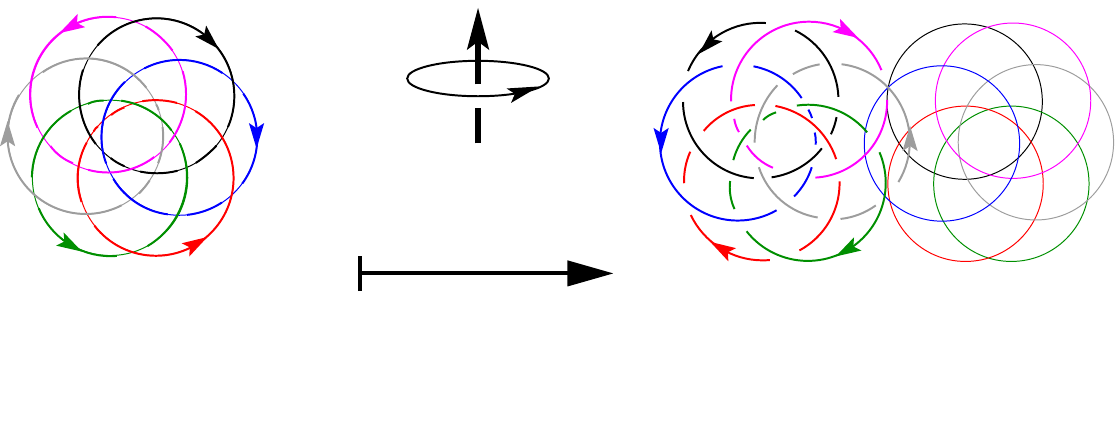_t}}
\caption{If we apply the isotopy $\Ve$ to the flower link $F(001101)$ we obtain the flower link $F(\In{(\Ov{001101})})=F(\ve(001101))=F(010011)$. This isotopy maps the $i$-th $F(001101)_i$ component of $F(001101)$ to the $(6-i+1)$-st component $F(\ve(001101))_{6-i+1}$ of $F(\ve(001101))$. Thus the $(F(001101),F(\ve(001101))$-permutation under $\Ve$ is the reverse permutation $\reverse$ on $[6]$. Thus $\Isop{\Ve}{F(001101)}{F(\ve(001101))}{\reverse}$. In general, if we apply $\Ve$ to a flower link $F(a)$ we obtain the flower link $F(\ve(a))$, that is, $\Isot{\Ve}{F(a)}{F(\ve({a}))}$. If $F(a)$ has $n$ components then $\Ve$ maps the $i$-th component of $F(a)$ to the $(n-i+1)$-st component of $F(\ve(a))$, for $i=1,\ldots,n$. That is, $\Isop{\Ve}{F(a)}{F(\ve{(a)})}{\reverse}$.}
\label{fig:f520}
\end{figure}
% *********************************************************

\subsubsection{Combining $\Ro_{2\pi/n}$ and $\Ve$: sufficient conditions for the equivalence between two flower links $F(a)$ and $F(b)$.}

{Let $a=a_1\cdots a_n$ and $b=b_1\cdots b_n$ be words. Evidently, if $F(b)$ is obtained from $F(a)$ by applying any sequence of isotopies $\Ro_{2\pi/n}$ and $\Ve$, then $F(a)\sim F(b)$. Since applying $\Ro_{2\pi/n}$ (respectively, $\Ve$) to a link $F(a)$ yields the link $F(\ro(a))$ (respectively, $F(\ve(a))$), it follows that {\em if a word $b$ is obtained from a word $a$ by applying any sequence of $\ro$ and/or $\ve$ operations, then $F(a)\sim F(b)$}.}

\vglue 0.3 cm
\noindent{\bf Definition.} {\sl We say that a word $b$ is {\em related} to a word $a$, and write $a\equiv b$, if $b$ can be obtained from $a$ by applying a sequence of $\ro$ and $\ve$ operations.}
\vglue 0.3 cm

Using this definition, the remark at the end of the previous paragraph can be paraphrased as follows.

\begin{observation}\label{obs:ifflower}
Let $a,b$ be words. If $a\equiv b$, then $F(a)\sim F(b)$.
\end{observation}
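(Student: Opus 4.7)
The plan is to argue by induction on the length $k$ of a sequence of $\ro$ and $\ve$ operations that produces $b$ from $a$. First I would record the two base facts already made explicit in the preceding paragraphs: for any word $c$ of length $n$, the isotopy $\Ro_{2\pi/n}$ witnesses $\Isot{\Ro_{2\pi/n}}{F(c)}{F(\ro(c))}$ and the isotopy $\Ve$ witnesses $\Isot{\Ve}{F(c)}{F(\ve(c))}$. In particular $F(c)\sim F(\ro(c))$ and $F(c)\sim F(\ve(c))$, so the statement holds when $k=1$.

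For the inductive step, suppose the statement holds whenever $b$ is obtained from $a$ by a sequence of at most $k$ operations, and suppose $b$ is obtained from $a$ by a sequence of $k+1$ operations. Write this sequence as first applying $k$ operations to $a$ to obtain some intermediate word $c$, and then applying a final operation (either $\ro$ or $\ve$) to $c$ to obtain $b$. By the induction hypothesis $F(a)\sim F(c)$, and by the base case $F(c)\sim F(b)$ (since $b$ is either $\ro(c)$ or $\ve(c)$). Transitivity of $\sim$ then gives $F(a)\sim F(b)$.

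I do not anticipate any real obstacle: the entire content of the observation has already been unpacked in the definitions of $\ro$ and $\ve$ and in the descriptions of $\Ro_{2\pi/n}$ and $\Ve$; the observation merely packages it as a statement about the relation $\equiv$. If desired, one can make the induction even more transparent by exhibiting the explicit witnessing isotopy: if the sequence producing $b$ from $a$ is the word $w_1 w_2\cdots w_{k+1}$ in the alphabet $\{\ro,\ve\}$, then the composition $\ii_{k+1}\circ\cdots\circ\ii_2\circ\ii_1$, where each $\ii_j$ is the corresponding $\Ro_{2\pi/n}$ or $\Ve$ applied to the appropriate intermediate link, is by Observation~\ref{obs:comprings} an isotopy from $F(a)$ to $F(b)$.
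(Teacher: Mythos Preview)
Your proposal is correct and takes essentially the same approach as the paper. In fact the paper treats this observation as a mere restatement of the sentence preceding the definition of $\equiv$: since each application of $\ro$ (respectively $\ve$) corresponds to the isotopy $\Ro_{2\pi/n}$ (respectively $\Ve$), any sequence of such operations yields an isotopy; your induction just makes this explicit.
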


\subsubsection{Some remarks on $\equiv$}

The relation $\equiv$ will play a crucial role in the proof of Theorem~\ref{thm:arrf}, as the goal will be to show that the sufficient condition for equivalence between flower links given in Observation~\ref{obs:ifflower} is actually necessary. Before we move on to the proof of the theorem, let us gather a few elementary facts about this relation.

{Consider $\ro$ and $\ve$ acting on words of some fixed length $n$. It is easy to see that they are the generators of a group $\grou{\ro}{\ve}$ under the composition $\circ$. Moreover, it is straightforward to verify that this group is isomorphic to the dihedral group $D_n$. Perhaps the easiest way to realize this is to note that the isotopies $\Ro_{2\pi/n}$ and $\Ve$ (the topological counterparts of $\ro$ and $\ve$, respectively) are the generators of a group naturally isomorphic to the group of symmetries of a regular $n$-gon, that is, to the dihedral group $D_n$.}

{Thus if $a,b$ are words of the same length then $a\equiv b$ if and only if there is an element $\age$ in $\grou{\ro}{\ve}$ such that $b=\age(a)$. Thus the next observation follows simply because $\grou{\ro}{\ve}$ is a group.}

\begin{remark}\label{rem:equiv}
{\em $\equiv$ is an equivalence relation}.
\end{remark}

{The dihedral nature of the group $\grou{\ro}{\ve}$ generated by $\ro$ and $\ve$ (we emphasize, throughout this discussion acting on words of some fixed length $n$) implies that it has $2n$ elements, and every element of $\grou{\ro}{\ve}$ can be written either as $\ro^s$ or as $\ro^{s}\circ\ve$ for some integer $s=0,\ldots,{n-1}$. Alternatively, every element of $\grou{\ro}{\ve}$ can be written either as $\ro^s$ or as $\ve\circ\ro^{s}$ for some integer $s=0,\ldots,{n-1}$. Thus we have the following.}

\begin{remark}\label{rem:equiv2}
Let $a,b$ be words of the same length $n$. Then $a\equiv b$ if and only if there is an $s\in\{0,\ldots,n-1\}$ such that either $b=\ro^{s}(a)$ or $b=\ro^{s}\circ\ve(a)$. In particular, the equivalence class of $a$ has size at most $2n$.
\end{remark}

It is easy to exhibit words whose equivalence class has size exactly $2n$, but we also note that there exist words whose equivalence class is much smaller: indeed, if $a$ is monotone, then its equivalence class consists only of $a$ and $\Ov{a}$.

Finally, suppose that a word $a$ can be written as a concatenation $a=D_1D_2$ of two subwords $D_1, D_2$. We say that the word $b=D_2D_1$ is a {\em shift} of $a$. Note that, evidently, $b$ is a shift of $a$ if and only if $a$ is a shift of $b$. Clearly, if we iteratively apply $\ro$ to $a$ a total of $|D_2|$ times (that is, if we apply $\ro^{|D_2|}$ to $a$) we obtain $b$. Thus the following holds.

\begin{remark}\label{rem:shift}
If $a$ is a word, and $b$ is a shift of $a$, then $b\equiv a$.
\end{remark}

\subsection{Reducing Theorem~\ref{thm:arrf} to a proposition}

Given a word $a$ of length $n$, we have thus identified a collection of words $b$ (including $a$ itself) such that $F(a)\sim F(b)$: these are the words $b$ such that $b\equiv a$. The main ingredient in the proof of Theorem~\ref{thm:arrf} is that the converse statement also holds, as long as the rank of $a$ is at least six. This statement parallels Proposition~\ref{pro:arrr} from Section~\ref{sec:proofthmarrr}.

\begin{proposition}\label{pro:arrf}
Let $a$ be a word of even rank {$r\ge 6$}. If $b$ is a word such that $F(a)\sim F(b)$, then $b\equiv {a}$.
\end{proposition}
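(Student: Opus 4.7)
The plan is to follow almost verbatim the strategy used for ring links in Section~\ref{sec:proofproparrr} and for boot links in Section~\ref{sub:proofboots}, with the dihedral group $\grou{\ro}{\ve}$ playing here the role that the small sets $\{a,\Ov{a},\In{a},\IO{a}\}$ and $\{a\}$ played in those two cases. I would first establish the flower analogues of the sublink Observations~\ref{obs:subr}--\ref{obs:subr2} and~\ref{obs:subs}--\ref{obs:subs2}: given a subword $a_{i_1}\cdots a_{i_k}$ of $a$, the sublink $F(a)_{i_1,\ldots,i_k}$ is isotopic to the smaller flower link $F(a_{i_1}\cdots a_{i_k})$ by an isotopy whose induced permutation lies in the dihedral subgroup $\grou{\sigma}{\nu}$ of $S_k$. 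Unlike the linear (ring and boot) cases this isotopy need not be strong: ``closing up'' the remaining petals of the flower after deleting some pseudocircles generically forces a cyclic relabeling of the surviving components. This cyclic slack is precisely the source of the dihedral symmetry in the conclusion of Proposition~\ref{pro:arrf}.

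With the sublink facts in place, the proposition reduces to the two lemmas announced in Section~\ref{sec:proofarrff}: the workhorse Lemma~\ref{lem:workhorseflower}, asserting that $b$ is the $\pi$-image of $a$, where $\pi$ is the permutation that $\ii$ induces between an oscillating sublink $F(a)_{i_1,\ldots,i_r}$ of $F(a)$ and its image $F(b)_{j_1,\ldots,j_r}$; and Lemma~\ref{lem:oscflowers2}, asserting (1) that the image of an oscillating sublink of size $r$ is again oscillating, and (2) that $\pi$ belongs to the dihedral subgroup $\grou{\sigma}{\nu}\le S_r$. Granted these, let $a=A_1\cdots A_r$ and $b=B_1\cdots B_r$ be the canonical decompositions (these have the same number of blocks by the flower analogue of Corollaries~\ref{cor:ringsrank} and~\ref{cor:bootsrank}, which falls out of Lemma~\ref{lem:oscflowers2}(1)). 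The workhorse lemma gives $|B_{\pi(i)}|=|A_i|$ for $i=1,\ldots,r$, while Lemma~\ref{lem:oscflowers2}(1) forces $b_{j_1}\cdots b_{j_r}$ to equal either $a_{i_1}\cdots a_{i_r}$ or $\Ov{a_{i_1}\cdots a_{i_r}}$. If $\pi=\sigma^{s}$, matching block-lengths and bit-values shows that $b=\ro^{t}(a)$ for the block-shift exponent $t=|A_1|+\cdots+|A_s|$; if $\pi=\sigma^{s}\circ\nu$, one obtains analogously that $b=\ro^{t}\circ\ve(a)$. In either case Remark~\ref{rem:equiv2} delivers $b\equiv a$.

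The main obstacle is Lemma~\ref{lem:oscflowers2}, which I would prove by induction on $n$ in parallel to Lemmas~\ref{lem:oscrings0} and~\ref{lem:oscboots0}. The base case $n=r$, handled separately in Section~\ref{sec:oscillatingflower}, comes down to computing with {\tt SnapPy} the intrinsic symmetry groups of the two oscillating flower links of length $r$; the expected answer is precisely the dihedral group of order $2r$, witnessed by $\Ro_{2\pi/r}$ and $\Ve$. The hypotheses $r\ge 6$ and $r$ even are imposed exactly to rule out the accidental extra symmetries that small or odd oscillating flower links may enjoy. The inductive step will compare the permutations that $\ii$ induces on two oscillating sublinks of $F(a)$ that differ in a single component; the subtlety absent from the ring and boot cases is that removing one pseudocircle from $\ff_n$ interacts non-trivially with the cyclic order, introducing a cyclic shift in one of the two induction applications. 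Tracking this shift by means of Remarks~\ref{rem:equiv} and~\ref{rem:shift}, and verifying that the resulting composition of two dihedral-valued partial permutations still lies in $\grou{\sigma}{\nu}\le S_r$, will be the technical heart of the argument.
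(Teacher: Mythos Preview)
Your overall architecture is right and essentially matches the paper, but two specific points deserve correction.

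First, your claim that the flower sublink isotopy ``need not be strong'' is false. Observation~\ref{obs:subf} in the paper asserts (and Figure~\ref{fig:f800} illustrates) that $F(a)_{i_1,\ldots,i_k}$ is isotopic to $F(a_{i_1}\cdots a_{i_k})$ via an isotopy whose induced permutation is the identity $\identity$, exactly as for rings and boots. Deleting petals and closing up the remaining ones does not force any cyclic relabeling: the surviving component with smallest index simply slides to the topmost-right position. The dihedral symmetry in the conclusion does not come from slack in the sublink isotopy; it comes from the global isotopies $\Ro_{2\pi/n}$ and $\Ve$ acting on the full flower, and these are what show up in the intrinsic symmetry group computed in Fact~\ref{fac:snappflower}. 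This misconception is harmless for the logic (if anything it simplifies your task), but your explanation of ``where the dihedral group comes from'' is off.

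Second, and this is a genuine gap: you assert that $a$ and $b$ have canonical decompositions with the same number $r$ of blocks ``by the flower analogue of Corollaries~\ref{cor:ringsrank} and~\ref{cor:bootsrank}''. That analogue (Corollary~\ref{cor:flowersrank}) only gives $\rank{b}\in\{r,r+1\}$, because the cyclic nature of the flower allows the first and last canonical blocks of $b$ to have the same bit and merge under a shift. The paper handles this by replacing $b$, when $\rank{b}=r+1$, with a cyclic shift $c$ of $b$ having even rank $r$, proving $c\equiv a$, and then concluding $b\equiv a$ via $c\equiv b$. Your sketch applies Lemma~\ref{lem:workhorseflower} directly to $b$, which requires both words to have the same even rank; without the passage through $c$ this step is not justified.

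One smaller remark: the induction for Lemma~\ref{lem:oscflowers2} (equivalently Lemma~\ref{lem:flowers0}) proceeds from $m$ to $m+2$, not $m$ to $m+1$, since only even lengths are in play; correspondingly the two oscillating sublinks used in the inductive step differ in two components, not one. The base case is the single value $n=6$, so only one {\tt SnapPy} computation is needed, not one for each $r$.
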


We defer the proof of this proposition to the next section (where we actually reduce it to a lemma, similarly as we reduced Proposition~\ref{pro:arrr} to two lemmas). We note that this proposition involves only words of {\em even} rank. Evidently not every word has even rank, and so we need a wider statement that includes words whose rank is odd. Fortunately, this is an easy consequence of Proposition~\ref{pro:arrf} itself.

\begin{corollary}\label{cor:arrf}
Let $a$ be a word of rank {$r\ge 6$}. If $b$ is a word such that $F(a)\sim F(b)$, then $b\equiv {a}$.
\end{corollary}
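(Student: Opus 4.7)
The plan is to reduce the odd-rank case to the even-rank case covered by Proposition~\ref{pro:arrf} via a suitable rotation. If $r$ is even, Proposition~\ref{pro:arrf} applies verbatim and yields $b\equiv a$, so I would focus on the case when $r$ is odd. Since $r\ge 6$ and $r$ is odd, in fact $r\ge 7$.

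The key observation is that when $r$ is odd, the first and last canonical subwords $A_1$ and $A_r$ of the canonical decomposition $a=A_1A_2\cdots A_r$ consist of the same symbol (because the runs alternate and an odd-length alternation returns to the starting letter). Setting $s:=|A_1|$, the word
\[
a' \,:=\, \ro^s(a) \,=\, A_2\,A_3\,\cdots\,A_r\,A_1
\]
is obtained by moving all of $A_1$ to the end of $a$, where it concatenates with $A_r$ into a single maximal run $A_rA_1$. Hence the canonical decomposition of $a'$ is $A_2\,A_3\,\cdots\,A_{r-1}\,(A_rA_1)$, which has exactly $r-1$ canonical subwords, so $\rank{a'}=r-1$. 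This number is even and, because $r\ge 7$, satisfies $r-1\ge 6$.

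With $a'$ in hand the argument closes quickly. By Remark~\ref{rem:shift} (or Remark~\ref{rem:equiv2}) one has $a\equiv a'$, so Observation~\ref{obs:ifflower} gives $F(a)\sim F(a')$; combined with the hypothesis $F(a)\sim F(b)$ this yields $F(a')\sim F(b)$. Applying Proposition~\ref{pro:arrf} to $a'$, whose rank is even and at least $6$, produces $b\equiv a'$, and since $\equiv$ is an equivalence relation (Remark~\ref{rem:equiv}) we conclude $b\equiv a$. There is essentially no obstacle: the only content beyond what is already in the excerpt is the elementary fact that $\ro^{|A_1|}$ decreases the rank by exactly one when $r$ is odd, which is immediate from the canonical decomposition.
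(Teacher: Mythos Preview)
Your proposal is correct and follows essentially the same route as the paper's own proof: both dispose of the even-rank case directly via Proposition~\ref{pro:arrf}, and for odd $r\ge 7$ both pass to the cyclic shift $a'=A_2\cdots A_{r-1}(A_rA_1)$, observe that this shift has even rank $r-1\ge 6$ because $A_1$ and $A_r$ share the same symbol, and then apply Proposition~\ref{pro:arrf} to $a'$ together with the transitivity of $\equiv$. The only cosmetic difference is that you name the shift explicitly as $\ro^{|A_1|}(a)$, whereas the paper invokes Remark~\ref{rem:shift} directly.
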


\begin{proof}
Let $a=a_1\cdots a_n$ be a word of rank $r \ge 6$, and let $b$ be a word such that $F(a)\sim F(b)$. If $r$ is even then we are done by Proposition~\ref{pro:arrf}, and so we assume that $r$ is odd. Note that $r\ge 7$.

Let $a=A_1 A_2 \cdots A_{r-1 }A_r$ be the canonical decomposition of $a$. Since $r$ is odd it follows that the concatenation $A_rA_1$ is a monotone word. If we let $A_r':=A_rA_1$ then $A_2 \cdots A_{r-1 }A_{r'}$ is the canonical decomposition of a word $a'$. Note that since $a'$ is a shift of $a$, it follows that $a'\equiv a$.

Observation~\ref{obs:ifflower} implies then that (i) $F(a')\sim F(a)$. Since by assumption $F(a)\sim F(b)$, it follows that (ii) $F(a')\sim F(b)$. Since the rank of $a'$ is the even number $r-1\ge 6$, using Proposition~\ref{pro:arrf} (i) and (ii) we obtain that $a'\equiv a$ and $a'\equiv b$. Since $\equiv$ is an equivalence relation, it follows that $a\equiv b$.
\end{proof}

\begin{proof}[Proof of Theorem~\ref{thm:arrf} (assuming Proposition~\ref{pro:arrf})]
Let $a$ be a word of length $n$. Using standard calculations one obtains that the probability that $\rank{a}$ is less than $6$ goes to $0$ as $n\to \infty$, and also the probability that the equivalence class of $a$ has size smaller than $2n$ goes to $0$ as $n\to\infty$. By Observation~\ref{obs:ifflower} and Corollary~\ref{cor:arrf} (which holds under the assumption that Proposition~\ref{pro:arrf} holds) this implies that the probability that there are {\em exactly} $2n$ words $b$ of length $n$ (including $a$) such that $F(a) \sim F(b)$ goes to $1$ as $n\to\infty$.

The one-to-one correspondence between binary words of length $n$ and elements of $\linksor{\ff_n}$ then implies that the probability that the equivalence class of a random link in $\linksor{\ff_n}$ has size $2n$ goes to $1$ as $n \to \infty$. Since $|\linksor{\ff_n}|=2^n$, Theorem~\ref{thm:arrf} follows. 
\end{proof}

\section{Proof of Proposition~\ref{pro:arrf}}\label{sec:reduceflowerproposition}

Similarly as with ring links and boot links, sublinks of flower links play a central role in the proof of Proposition~\ref{pro:arrf}, and so we start with a brief discussion on these objects.

\subsection{Sublinks of flower links}

Let $a=a_1\ldots a_n$ be a  word, and let $i_1,\ldots,i_k$ be integers such that $1 \le i_1 < \cdots < i_k \le n$. Then $a_{i_1}\cdots a_{i_k}$ is a subword of $a$, and this subword naturally corresponds to a link $F(a)_{i_1}\cup \cdots \cup F(a)_{i_k}$ (recall that $F(a)_i$ is the $i$-th component of the flower link $F(a)$). We say that $F(a)_{i_1}\cup \cdots \cup F(a)_{i_k}$ is a {\em sublink} of $F(a)$, and for brevity we use $F(a)_{i_1,\ldots,i_k}$ to denote it.

It is worth noting that this notation is consistent with the way we denote a single component of $F(a)$: if $k=1$ then we have a single integer $i_1$, and so the corresponding sublink consists of the component $F(a)_{i_1}$.

We say that the flower link $F(a)$ is {\em oscillating} if the word $a$ is oscillating, and we say that the sublink $F(a)_{i_1,\ldots,i_k}$ of $F(a)$ is {\em oscillating} if $a_{i_1} \cdots a_{i_k}$ is an oscillating subword of $a$. Note that obviously no oscillating sublink of $F(a)$ can have size larger than the rank $r$ of $a$, since this is the length of a longest oscillating subword of $a$.

\subsection{Reducing Proposition~\ref{pro:arrf} to a lemma}

When we dealt with ring links in Section~\ref{sec:proofproparrr}, we reduced Proposition~\ref{pro:arrr} to Lemmas~\ref{lem:workhorsering} and~\ref{lem:oscrings2}. Here we proceed similarly. As we shall see, the proposition follows easily from the next two lemmas, which parallel Lemmas~\ref{lem:workhorsering} and~\ref{lem:oscrings2}, respectively.

Back in Section~\ref{sec:proofproparrr} we only stated Lemmas~\ref{lem:workhorsering} and~\ref{lem:oscrings2}, and proved them in later sections. For flower links we only need to defer the proof of Lemma~\ref{lem:oscflowers2} to later sections: as we shall see, the proof of Lemma~\ref{lem:workhorseflower} is virtually identical to the proof of Lemma~\ref{lem:workhorsering}, under the assumption that Lemma~\ref{lem:oscflowers2} holds.

\begin{lemma}\label{lem:workhorseflower}
Let $a,b$ be words with the same even rank $r\ge 6$. Suppose that $F(a)\sim F(b)$, and let $\ii$ be an $\isot{F(a)}{F(b)}$ isotopy. Let $F(a)_{i_1,\ldots,i_r}$ be an oscillating sublink of $F(a)$, and let $F(b)_{j_1,\ldots,j_r}$ be its image under $\ii$. Then $b$ is the $\pi$-image of $a$, where $\pi$ is the $(F(a_{i_1,\ldots,i_r}),F(b_{j_1,\ldots,j_r}))$-permutation under $\ii$.
\end{lemma}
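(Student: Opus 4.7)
The plan is to imitate, almost verbatim, the proof of Lemma~\ref{lem:workhorsering}; virtually every step carries over because that argument depends only on Lemma~\ref{lem:oscrings2}(2), for which we have the direct flower analogue in Lemma~\ref{lem:oscflowers2}(2). First I would use the hypothesis that $a$ and $b$ have the same rank $r$ to write their canonical decompositions $a=A_1A_2\cdots A_r$ and $b=B_1B_2\cdots B_r$, choosing indices $p_k,q_k$ and $s_k,t_k$ so that $A_k=a_{p_k}\cdots a_{q_k}$ and $B_k=b_{s_k}\cdots b_{t_k}$. Define the $k$-th canonical sublinks of $F(a)$ and $F(b)$ by $F_k(a):=F(a)_{p_k,\ldots,q_k}$ and $F_k(b):=F(b)_{s_k,\ldots,t_k}$; each of $F(a)$ and $F(b)$ is then the disjoint union of its $r$ canonical sublinks. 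Since the subword $a_{i_1}\cdots a_{i_r}$ is oscillating of length $r$, necessarily each $i_k$ lies in $\{p_k,\ldots,q_k\}$.

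The crucial combinatorial fact is that a sublink of $F(a)$ of size $r$ is oscillating if and only if it contains exactly one component from each canonical sublink $F_k(a)$ (and likewise for $F(b)$). By the definition of $\pi$, the isotopy $\ii$ sends $F(a)_{i_k}$ to $F(b)_{j_{\pi(k)}}\in F_{\pi(k)}(b)$. I would then prove the stronger claim $(\dag)$: \emph{for every $k\in\{1,\ldots,r\}$, $\ii$ sends every component of $F_k(a)$ to some component of $F_{\pi(k)}(b)$.} The argument is by contradiction: if some component $F(a)_{i_k'}\in F_k(a)$ were sent outside $F_{\pi(k)}(b)$, then replacing $i_k$ by $i_k'$ would yield an oscillating sublink $F(a)_{i_1,\ldots,i_{k-1},i_k',i_{k+1},\ldots,i_r}$ of $F(a)$ of size $r$ whose image under $\ii$ misses the canonical sublink $F_{\pi(k)}(b)$; this image would then fail to contain one component from every canonical sublink of $F(b)$, and would therefore not be oscillating, contradicting Lemma~\ref{lem:oscflowers2}(2).

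From $(\dag)$ we obtain $|A_k|=|F_k(a)|\le|F_{\pi(k)}(b)|=|B_{\pi(k)}|$ for every $k$; since $\sum_{k=1}^r|A_k|=\sum_{k=1}^r|B_{\pi(k)}|=n$, the inequalities are in fact equalities, so $|A_k|=|B_{\pi(k)}|$ for all $k$. By the definition given prior to the lemma, this says precisely that $b$ is the $\pi$-image of $a$, which is the desired conclusion.

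The main (and indeed only significant) obstacle is that the whole argument is parasitic on Lemma~\ref{lem:oscflowers2}(2), whose proof is genuinely nontrivial and is deferred to later sections; once that input is granted, the present proof is a straightforward counting argument through canonical decompositions and makes no essential use of the specific geometry of the flower arrangement beyond what is already abstracted in the notions of rank and of oscillating sublink. Note in particular that the parity hypothesis on $r$ plays no role in the present step: it will be needed only in the proof of Lemma~\ref{lem:oscflowers2}.
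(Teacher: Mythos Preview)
Your proposal is correct and follows essentially the same route as the paper: the paper's proof literally says to replicate the proof of Lemma~\ref{lem:workhorsering} with $R$ replaced by $F$ and with Lemma~\ref{lem:oscflowers2}(2) in place of Lemma~\ref{lem:oscrings2}(2), which is exactly what you do. Your observation that the hypothesis already gives $\rank b=r$ (so that Corollary~\ref{cor:flowersrank} is not strictly needed here) and that the parity of $r$ plays no role in this particular step are both accurate and slightly sharper than what the paper records.
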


\begin{lemma}\label{lem:oscflowers2}
Let $a$ be a word of even rank $r\ge 6$, and let $F(a)_{i_1,\ldots,i_r}$ be an oscillating sublink of $F(a)$ of size $r$. Let $b$ be a word such that $F(a)\sim F(b)$, and let $\ii$ be an $\isot{F(a)}{F(b)}$ isotopy. Let $F(b)_{j_1,\ldots,j_r}$ be the sublink of $F(b)$ that is the image of $F(a)_{i_1,\ldots,i_r}$ under $\ii$. Then,
\begin{enumerate}
\item[(1)] the $(F(a)_{i_1,\ldots,i_r},F(b)_{j_1,\ldots,j_r})$-permutation under $\ii$ is either $\rot{r}^s$ or $\rot{r}^{s}\circ\reverse$ for some $s\in\{0,\ldots,r-1\}$; and
\item[(2)] if $s$ is even then $b_{j_1}\cdots b_{j_r} = a_{i_1}\cdots a_{i_r}$, and if it is odd then $b_{j_1}\cdots b_{j_r} = \Ov{a_{i_1}\cdots a_{i_r}}$.
\end{enumerate}
In particular, in any case $b_{j_1} \cdots b_{j_r}$ is an oscillating subword of $b$, and so $F(b)_{j_1,,\ldots,j_r}$ is an oscillating sublink of $F(b)$.
\end{lemma}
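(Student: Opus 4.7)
The plan is to reduce Lemma~\ref{lem:oscflowers2} to an equivalent statement phrased in terms of oscillating flower links (rather than oscillating sublinks), following the same strategy used to convert Lemma~\ref{lem:oscrings2} into Lemma~\ref{lem:oscrings0}. To carry out this reduction I would first establish flower-link analogs of Observations~\ref{obs:subs} and~\ref{obs:subs2}: since any subcollection of pseudocircles of a flower arrangement is (up to isomorphism) still a flower arrangement, any sublink $F(a)_{i_1,\ldots,i_k}$ is strongly isotopic to the smaller flower link $F(a_{i_1}\cdots a_{i_k})$, via an isotopy that simply ``brings together'' the chosen components. With these tools the lemma reduces to the claim that if $a=a_1\cdots a_n$ is oscillating of even length $n\ge 6$, $\ii$ is an $\isot{F(a)}{F(b)}$ isotopy, and $\pi$ is the $(F(a),F(b))$-permutation under $\ii$, then $\pi=\rot{n}^s$ or $\pi=\rot{n}^s\circ\reverse$ for some $s\in\{0,\ldots,n-1\}$, and moreover $b=a$ when $s$ is even while $b=\Ov a$ when $s$ is odd.

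I would then proceed by induction on $n$, with base case $n=6$. For $n=6$ the only oscillating words are $010101$ and $101010=\Ov{010101}$. Using \texttt{SageMath} I would verify that the Jones polynomial of $F(010101)$ (which by Observation~\ref{obs:ifflower} equals that of $F(101010)$) is distinct from the Jones polynomial of every other six-component flower link; this forces $b\in\{a,\Ov a\}$ whenever $F(a)\sim F(b)$. I would then use \texttt{SnapPy} to compute the intrinsic symmetry groups of $F(010101)$ and $F(101010)$, which I expect to be the dihedral group of order $12$, consisting precisely of the symmetries witnessed by the twelve compositions of $\Ro_{2\pi/6}$ and $\Ve$: namely $(1,(-1)^s,\ldots,(-1)^s,\rot{6}^s)$ and $(1,(-1)^s,\ldots,(-1)^s,\rot{6}^s\circ\reverse)$ for $s=0,\ldots,5$. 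An $\isot{F(a)}{F(b)}$ isotopy with $b=a$ (respectively, $b=\Ov a$) must preserve (respectively, reverse) the orientation of every component it maps, so pairing the all-$(+1)$ (respectively, all-$(-1)$) sign pattern with the above list pins $\pi$ to the required dihedral form with $s$ of the required parity.

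For the inductive step, assume the claim holds for oscillating words of even length $n\ge 6$, and let $a$ be oscillating of even length $n+2$. I would consider the two oscillating sublinks $F(a)_{1,\ldots,n}$ and $F(a)_{3,\ldots,n+2}$, both of even length $n$, and let $F(b)_{j_1,\ldots,j_n}$ and $F(b)_{k_1,\ldots,k_n}$ denote their respective images under $\ii$. By the flower-link analog of Observation~\ref{obs:subs2} and the inductive hypothesis applied to each sublink, the restricted permutation has dihedral form $\rot{n}^{s_1}$ or $\rot{n}^{s_1}\circ\reverse$ (and similarly with $s_2$ for the second sublink), and the corresponding subword of $b$ equals either the corresponding subword of $a$ or its negation, according to the parity of $s_1$ (respectively, $s_2$). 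These two dihedral constraints must agree on the overlap $\{3,\ldots,n\}$, of size $n-2\ge 4$, and combining them forces $\pi$ to extend to a dihedral permutation on $[n+2]$ with the correct parity matching, and simultaneously forces $b$ to equal $a$ or $\Ov a$ in the prescribed way.

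The main obstacle will be this last combinatorial step. Unlike in the ring case, where each restriction lies in the two-element group $\{\identity,\reverse\}$ and compatibility is essentially automatic, here each restriction is one of $2n$ dihedral elements and the matching on the $(n-2)$-element overlap requires the key rigidity that any oscillating word of length $n-2\ge 4$ uniquely determines both the shift parameter $s$ modulo $n$ (up to the overall even/odd parity choice) and whether or not a reflection has been applied. Together with the induction hypothesis, this forces $s_1$ and $s_2$ to be consistent, thereby producing a unique dihedral $\pi$ on $[n+2]$ and a unique $b\in\{a,\Ov a\}$ extending the two restricted data, and closing the induction.
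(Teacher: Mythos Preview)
Your overall strategy---reduce to the statement for oscillating flower links via a flower analogue of Observation~\ref{obs:subs2}, prove the base case $n=6$ by combining a Jones-polynomial computation with the {\tt SnapPy} computation of the intrinsic symmetry group, and then induct from $n$ to $n+2$ using two oscillating sublinks of size $n$---is exactly the route the paper takes. The base case is handled precisely as you describe.

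However, your inductive step has a genuine gap: the pair of sublinks $F(a)_{1,\ldots,n}$ and $F(a)_{3,\ldots,n+2}$ does \emph{not} suffice to force $\pi$ into dihedral form. Concretely, take $n=6$ and $\pi=(7\,8\,3\,4\,5\,6\,1\,2)$ on $[8]$. The restriction of $\pi$ to $\{1,\ldots,6\}$ has image $\{3,\ldots,8\}$ and induced permutation $(5\,6\,1\,2\,3\,4)=\sigma^{4}$ on $[6]$; the restriction to $\{3,\ldots,8\}$ has image $\{1,\ldots,6\}$ and induced permutation $(3\,4\,5\,6\,1\,2)=\sigma^{2}$ on $[6]$. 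Both satisfy your inductive hypothesis (and both even give $b=a$ on their respective ranges), yet $\pi$ itself is neither $\sigma^{s}$ nor $\sigma^{s}\circ\nu$ on $[8]$. The point is that the four ``extra'' values $\pi(1),\pi(2),\pi(n+1),\pi(n+2)$ all land in the same cyclic gap (between $\pi(n)$ and $\pi(3)$), and your two constraints do not control their relative order there.

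The paper sidesteps this by choosing, for part~(1), the sublinks $F(a)_{1,\ldots,m}$ and $F(a)_{1,4,5,\ldots,m+2}$, which \emph{share the index~$1$}. This common anchor pins both cyclic orders at the same point, so the two ``consistent'' (or both ``anticonsistent'') conclusions really do glue to a single dihedral $\pi$ on $[m+2]$. Indeed, the counterexample above is immediately excluded: the restriction of $\pi$ to $\{1,4,5,6,7,8\}$ induces $(6\,3\,4\,5\,1\,2)$, which is not dihedral, contradicting the inductive hypothesis. Your pair $\{1,\ldots,n\}$, $\{3,\ldots,n+2\}$ is exactly what the paper uses for part~(2) (determining $b$), but for part~(1) you need a pair with a shared boundary index.
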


Similarly as we proceeded back in Section~\ref{sec:proofproparrr} for ring links, we defer the proof of Lemma~\ref{lem:oscflowers2} to the next two sections, and we prove Proposition~\ref{pro:arrr} and Lemma~\ref{lem:workhorseflower} assuming Lemma~\ref{lem:oscflowers2}. We also note the following easy consequence of Lemma~\ref{lem:oscflowers2}. 

\begin{corollary}\label{cor:flowersrank}
Let $a=a_1 \cdots a_n$ be a word of even rank $r\ge 6$, and suppose that $b$ is a word such that $F(a)\sim F(b)$. Then, $\rank{b}$ is either $r$ or $r+1$. In particular, if $\rank{b}$ is even then $\rank{b}=r$.
\end{corollary}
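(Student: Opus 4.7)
The proof will follow the pattern of Corollary~\ref{cor:ringsrank}, but the parity restriction in Lemma~\ref{lem:oscflowers2} (which requires the source word to have \emph{even} rank at least $6$) prevents a direct symmetric application. The strategy is to get the lower bound $\rank{b}\ge r$ immediately from Lemma~\ref{lem:oscflowers2}, and then to obtain the upper bound by a parity case analysis, using the shift trick already employed in Corollary~\ref{cor:arrf} to reduce the odd case to an even one.

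Let me set $s:=\rank{b}$ and fix any $\isot{F(a)}{F(b)}$ isotopy $\ii$. Since $a$ has even rank $r\ge 6$, I can choose an oscillating sublink $F(a)_{i_1,\ldots,i_r}$ of $F(a)$ of size $r$, and apply Lemma~\ref{lem:oscflowers2} to conclude that its image under $\ii$ is an oscillating sublink of $F(b)$ of size $r$. In particular $b$ admits an oscillating subword of length $r$, and so $s\ge r$.

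For the upper bound I split on the parity of $s$. \textbf{Case A: $s$ is even.} Then $s\ge r\ge 6$, so $b$ itself has even rank at least $6$, and I can apply Lemma~\ref{lem:oscflowers2} with the roles of $a$ and $b$ swapped, using the inverse isotopy $\In{\ii}$, to an oscillating sublink of $F(b)$ of size $s$. This produces an oscillating sublink of $F(a)$ of size $s$, which forces $r\ge s$, and thus $s=r$. \textbf{Case B: $s$ is odd.} Since $s\ge r\ge 6$ we have $s\ge 7$. Let $b=B_1B_2\cdots B_s$ be the canonical decomposition of $b$. Because $s$ is odd, $B_1$ and $B_s$ are monotone words of the same ``color'' (both runs of $0$s or both runs of $1$s), so their concatenation $B_s B_1$ is monotone. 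Hence $b':=B_2 B_3\cdots B_{s-1}(B_sB_1)$ has $s-1$ canonical subwords and satisfies $\rank{b'}=s-1$. Moreover $b'$ is a shift of $b$, so by Remark~\ref{rem:shift} $b'\equiv b$, and Observation~\ref{obs:ifflower} gives $F(b)\sim F(b')$; by transitivity, $F(a)\sim F(b')$. Since $s-1$ is even and $s-1\ge 6$, Case~A applies to $b'$ in place of $b$ (or equivalently, Lemma~\ref{lem:oscflowers2} applied from $F(b')$ to $F(a)$ yields an oscillating sublink of $F(a)$ of size $s-1$), giving $r\ge s-1$, i.e.\ $s\le r+1$.

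Combining the two bounds yields $s\in\{r,r+1\}$, which is the first claim. The ``in particular'' assertion is then immediate: if $s$ is even, $s$ cannot equal the odd number $r+1$, and so $s=r$. The only step that requires any care is the shift reduction in Case~B; everything else is a direct application of Lemma~\ref{lem:oscflowers2}, so no serious obstacle is anticipated.
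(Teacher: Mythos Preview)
Your proof is correct and follows essentially the same route as the paper: the lower bound $s\ge r$ comes directly from Lemma~\ref{lem:oscflowers2}, and the upper bound is obtained by a parity split, with the odd case handled via the shift trick to pass to a word $b'$ of even rank $s-1$. Your version is in fact slightly cleaner: the paper's proof writes ``since $b'\equiv b$ and $b\equiv a$'' at a point where only $F(a)\sim F(b)$ (not yet $b\equiv a$) is known, whereas you correctly argue $F(b')\sim F(b)\sim F(a)$ via Observation~\ref{obs:ifflower} and transitivity of $\sim$.
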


\begin{proof}
Let $s:=\rank{b}$. Lemma~\ref{lem:oscflowers2} implies that there is an oscillating sublink of $R(b)$ of size $r$, and so it follows that $s\ge r$. In particular, $s\ge 6$. 

Suppose first that $s$ is even. Then we can apply the lemma also to an $\isot{R(b)}{R(a)}$ isotopy, obtaining that there must exist an oscillating sublink of $R(a)$ of size $s$, and so $r\ge s$. Thus in this case $r=s$.

Suppose finally that $s$ is odd, and seeking a contradiction suppose that $s > r+1$. As we argued in the proof of Corollary~\ref{cor:arrf}, there is a shift $b'$ of $b$ of rank $s-1$. Since $b'\equiv b$ and $b\equiv a$, then $b'\equiv a$, and so there is an $\isot{R(b')}{R(a)}$ isotopy $\jj$. Since $\rank{b'}=s-1$ is even, we can apply Lemma~\ref{lem:oscflowers2} to $\jj$, and obtain that there must exist an oscillating sublink of $R(a)$ of size $s-1$. But this is impossible, since $\rank{a}=r$ and $s-1>r$.
\end{proof}

\begin{proof}[Proof of Lemma~\ref{lem:workhorseflower} (assuming Lemma~\ref{lem:oscflowers2})]
This is virtually identical to the proof of Lemma~\ref{lem:workhorsering}. It suffices to replace every occurrence of ``$R$'' with ``$F$'', to invoke Corollary~\ref{cor:flowersrank} instead of Corollary~\ref{cor:ringsrank}, and to invoke Lemma~\ref{lem:oscflowers2}\,(2) instead of Lemma~\ref{lem:oscrings2}\,(2).
\end{proof}

\begin{proof}[Proof of Proposition~\ref{pro:arrf} (assuming Lemma~\ref{lem:oscflowers2})]
Let $a=a_1\cdots a_n$ be a word with even rank $r\ge 6$. Let $b=b_1\cdots b_n$ be a word such that $F(a)\sim F(b)$, and let $\ii$ be an $(F(a),F(b))$ isotopy. 

In the proof we make essential use of Lemma~\ref{lem:workhorseflower}. Since that lemma works under the assumption that {\em both} words are of the same even rank at least $6$, and we only assume that $a$ has even rank at least $6$ (as far as we know, the rank of $b$ could be any positive integer), we need to make a little adjustement in order to use that lemma, introducing a word $c$ of the same rank as $a$. 

By Corollary~\ref{cor:flowersrank} it follows that $\rank{b}$ is either $r$ or $r+1$. If $\rank{b}=r$ then we simply let $c=b$. If $\rank{b}=r+1$ then we note, as in the proof of Corollary~\ref{cor:arrf}, that there is a word $c$ such that $\rank{c}=\rank{b}-1=r$ such that $c\equiv b$. We shall prove that $c\equiv a$. Since $\equiv$ is an equivalence relation, it will follow that $b\equiv a$, as required.

Let $F(a)_{i_1,\ldots,i_r}$ be an oscillating sublink of $F(a)$, and let $F(c)_{j_1,\ldots,j_r}$ be the sublink of $F(c)$ that is the image of $F(a)_{i_1,\ldots,i_r}$ under $\ii$, and let $\pi$ be the $(F(a)_{i_1,\ldots,i_r},F(c)_{j_1,\ldots,j_r})$-permutation under $\ii$. We note that Lemma~\ref{lem:oscflowers2} implies $c_{j_1}\cdots c_{j_r}$ is oscillating. That is, $F(c)_{j_1,\ldots,j_r}$ is an oscillating sublink of $F(c)$.

Let $a=A_1 \cdots A_r=a_{i_1}^{|A_1|}\cdots a_{i_r}^{|A_r|}$ be the canonical decomposition of $a$. Since $\rank{c}=\rank{a}=r$, we let  $c=C_1 \cdots C_r=c_{j_1}^{|C_1|}\cdots c_{j_r}^{|C_r|}$ be the canonical decomposition of $c$. 

Lemma~\ref{lem:workhorseflower} implies that $c$ is the $\pi$-image of $a$, and Lemma~\ref{lem:oscflowers2}\,(1) implies that there is an $s\in\{0,\ldots,r-1\}$ such that $\pi$ is either $\rot{r}^s$ or $\rot{r}^s\circ\reverse$. 

We assume first that $\pi=\rot{r}^s$, for some $s\in\{0,\ldots,r-1\}$. Since $c$ is the $\pi$-image of $a$ it follows that $|C_{\pi(k)}|=|A_k|$ for $k=1,\ldots,r$. That is, $|C_{s\opi{r}k}|=|A_k|$ for $k=1,\ldots,r$. Therefore ($*$) $a=a_{i_1}^{|C_{s+1}|}a_{i_2}^{|C_{s+2}|} \cdots a_{i_{r-s}}^{|C_{r}|} a_{i_{r-s+1}}^{|C_1|} \cdots a_{i_{r-1}}^{|C_{s-1}|} a_{j_r}^{|C_s|}$.

Suppose that $s$ is even. Since $a_{i_1} \cdots a_{i_r}$ is oscillating and $r$ is even, this implies that $a_{i_1}\cdots a_{i_r}=a_{i_{s+1}} \cdots a_{i_{r}} a_{i_{1}} \cdots a_{i_{s}}$. Moreover, since $s$ is even then Lemma~\ref{lem:oscflowers2}\,(2) implies that $c_{j_1}\cdots c_{j_r}= a_{i_1}\cdots a_{i_r}$, and so $a_{i_{s+1}} \cdots a_{i_{r}} a_{i_{1}} \cdots a_{i_{s}}=c_{i_{s+1}} \cdots c_{i_{r}} c_{i_{1}} \cdots c_{i_{s}}$. Therefore $a_{i_1}\cdots a_{i_r}=c_{i_{s+1}} \cdots c_{i_{r}}$  $c_{i_{1}}\cdots c_{i_{s}}$, and so ($*$) implies that $a=c_{i_{s+1}}^{|C_{s+1}|} \cdots  c_{i_{r}}^{|C_{r}|} c_{i_{1}}^{|C_1|} \cdots c_{j_r}^{|C_s|}$. Thus $a$ is a shift of $c$, and so by Remark~\ref{rem:shift} $c\equiv a$.

Suppose now that $s$ is odd. Since $a_{i_1} \cdots a_{i_r}$ is oscillating and $r$ is even, this implies that $a_{i_1}\cdots a_{i_r}=\Ov{a_{i_{s+1}} \cdots a_{i_{r}} a_{i_{1}} \cdots a_{i_{s}}}$. Moreover, since $s$ is odd then Lemma~\ref{lem:oscflowers2}\,(2) implies that $c_{j_1}\cdots c_{j_r}= \Ov{a_{i_1}\cdots a_{i_r}}$, and so $\Ov{a_{i_{s+1}} \cdots a_{i_{r}} a_{i_{1}} \cdots a_{i_{s}}}=c_{i_{s+1}} \cdots c_{i_{r}} c_{i_{1}} \cdots c_{i_{s}}$. Therefore $a_{i_1}\cdots a_{i_r}=c_{i_{s+1}} \cdots c_{i_{r}} c_{i_{1}}\cdots c_{i_{s}}$, and so as in the previous case using ($*$) we obtain that $c\equiv a$.

We finally assume that $\pi=\rot{r}^s\circ\reverse$, for some $s\in\{0,\ldots,r-1\}$. Since $c$ is the $\pi$-image of $a$ it follows that $|C_{\pi(k)}|=|A_k|$ for $k=1,\ldots,r$. That is, $|C_{s\opi{r}(r-k+1)}|=|A_k|$ for $k=1,\ldots,r$. Therefore 
$$\hbox{($**$) $a=a_{i_1}^{|C_{s}|}a_{i_2}^{|C_{s-1}|} \cdots a_{i_{s-1}}^{|C_{2}|} a_{i_{s}}^{|C_1|} a_{i_{s+1}}^{|C_{r}|} \cdots a_{r-1}^{|C_{s+2}|} a_{i_r}^{|C_{s+1}|}$.}$$

Suppose that $s$ is even. Since $a_{i_1} {\cdots} a_{i_r}$ is oscillating and $r$ is even, this implies that $a_{i_1}{\cdots} a_{i_r}=\Ov{a_{i_{s}} {\cdots} a_{i_{r}} a_{i_{1}} {\cdots} a_{i_{s-1}}}$. Moreover, since $s$ is even then Lemma~\ref{lem:oscflowers2}\,(2) implies that $c_{j_1}{\cdots} c_{j_r}= a_{i_1}{\cdots} a_{i_r}$, and so $\Ov{a_{i_{s}} {\cdots} a_{i_{r}} a_{i_{1}} {\cdots} a_{i_{s-1}}}=\Ov{c_{i_{s}} {\cdots} c_{i_{r}} c_{i_{1}} {\cdots} c_{i_{s-1}}}$. Therefore $a_{i_1}{\cdots} a_{i_r}=\Ov{c_{i_{s}} {\cdots} c_{i_{r}} c_{i_{1}} {\cdots} c_{i_{s-1}}}$, and so ($**$) implies that $a=\Ov{c_{i_{s}}}^{|C_{s}|} {\cdots}  \Ov{c_{i_{1}}}^{|C_{1}|} \Ov{c_{i_{r}}}^{|C_r|} {\cdots} \Ov{c_{j_{s-1}}}^{|C_{s-1}|}$. Thus $a$ is a shift of $\Ov{c_{i_{r}}}^{|C_{r}|} {\cdots}  \Ov{c_{i_{s}}}^{|C_{s}|} \Ov{c_{i_{s}}}^{|C_{s-1}|} {\cdots}$ ${\cdots} \Ov{c_{j_{1}}}^{|C_{1}|}=\IO{c}=\ve(c)$. Remark~\ref{rem:shift} then implies that $a\equiv \ve(c)$, and since $\ve(c)\equiv c$ it follows that $c\equiv a$. 

Suppose now that $s$ is odd. Since $a_{i_1} {\cdots} a_{i_r}$ is oscillating and $r$ is even, this implies that $a_{i_1}{\cdots} a_{i_r}{=}{a_{i_{s}} {\cdots} a_{i_{r}} a_{i_{1}} {\cdots} a_{i_{s-1}}}$. Moreover, since $s$ is odd then Lemma~\ref{lem:oscflowers2}\,(2) implies that $a_{i_1}{\cdots} a_{i_r}=\Ov{c_{j_1}{\cdots} c_{j_r}} $, and so ${a_{i_{s}} {\cdots} a_{i_{r}} a_{i_{1}} {\cdots} a_{i_{s-1}}}=\Ov{c_{i_{s}} {\cdots} c_{i_{r}} c_{i_{1}} {\cdots} c_{i_{s-1}}}$. Hence $a_{i_1}{\cdots} a_{i_r}=\Ov{c_{i_{s}} {\cdots} c_{i_{r}} c_{i_{1}} {\cdots} c_{i_{s-1}}}$, and so as in the previous case using ($**$) we obtain that $c\equiv a$.
\end{proof}

% ******************************************************************************************************************

% *********************************************************
% *********************************************************

% P R O O F    O F    L E M M A    FLOWERS

% *********************************************************
% *********************************************************
\section{Towards the proof of Lemma~\ref{lem:oscflowers2}: small flower links}\label{sec:oscillatingflower}

We will prove Lemma~\ref{lem:oscflowers2} by induction on $n$, similarly as we proceeded in the proofs of {Lemmas~\ref{lem:oscrings2}} (for ring links) and~\ref{lem:oscboots2} (for boot links). Our aim in this section is to prove the lemma when $n=6$, which is the base case of the induction. As we shall see, this base case is equivalent to Claim~\ref{cla:oscflowers2} at the end of the section.

\subsection{The intrinsic symmetry groups of $F(010101)$ and $F(101010)$}

In order to establish the base case of the proof of Lemma~\ref{lem:oscflowers2} we need to calculate the intrinsic symmetry groups of $F(010101)$ and $F(101010)$. These links are hyperbolic, and so also in this case we followed the approach described in~\cite[Section 3]{canta1} to compute their intrinsic symmetry groups using {\tt SnapPy}. 

The results we obtained, which we state in Fact~\ref{fac:snappflower}, involve two particular intrinsic symmetries. Let us focus on  $F(010101)$ in this brief discussion, as the results for $F(101010)$ are totally analogous. 

The first symmetry pertains the action of the isotopy $\Ro_{2\pi/6}$ on $F(010101)$, illustrated in Figure~\ref{fig:f810}. As we already know, this isotopy takes the flower link $F(010101)$ to $F(\ro(010101))=F(101010)$. 

% *********************************************************
\begin{figure}[ht!]
\centering
\scalebox{0.8}{\input{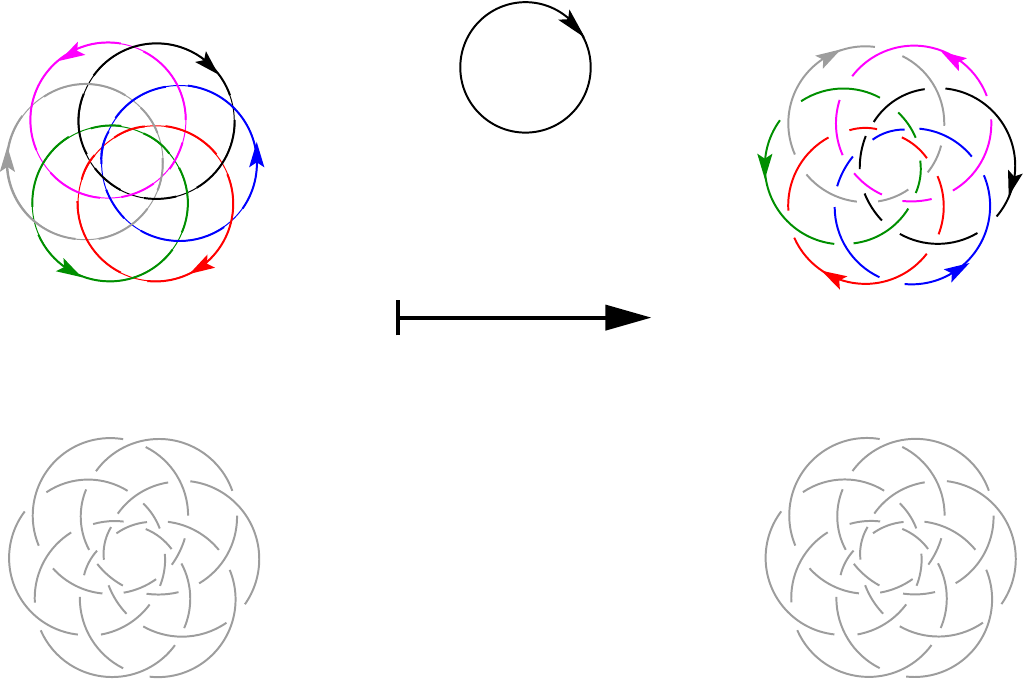_t}}
\caption{The isotopy $\Ro_{2\pi/6}$ takes $F(010101)$ to $F(101010)$. If we ignore the orientations of the components, as we do at the bottom of this figure, we realize that this isotopy actually takes $F(010101)$ to itself. Thus it is valid to say that this isotopy takes $F(010101)$ to itself, but with the $i$-th component of $F(010101)$ taken to the $(i\opi{6}1)$-st component of $F(010101)$ with its orientation reversed. Thus this isotopy witnesses that $(1,-1,-1,-1,-1,-1,-1,\rot{6})$ is an intrinsic symmetry of $F(010101)$.}
\label{fig:f810}
\end{figure}
% *********************************************************

On the other hand, as we also illustrate in that figure, if we ignore for a moment the orientations of the components, it is valid to say that $\Ro_{2\pi/6}$ {\em takes $F(010101)$ to itself, but each component is taken to a component with its orientation reversed}. That is, for instance, $\Ro_{2\pi/6}$ takes $F(010101)_1$ to $F(010101)_2$ with its orientation reversed, that is, to $-1\cdot F(010101)_2$. Since the isotopy takes $F(010101)_i$ to $-1\cdot F(010101)_{i\opi{6}1}$ for $i=1,2,3,4,5,6$, it follows that $(1,-1,-1,-1,-1,-1,-1,(2\,3\,4\,5\,6\,1))=(1,-1,-1,-1,-1,-1,-1,\rot{6})$ is an intrinsic symmetry of $F(010101)$.

The second relevant symmetry involves the action of the isotopy $\Ve$ on $F(010101)$, illustrated in Figure~\ref{fig:f820}. As we noted in Section~\ref{sec:proofarrff}, $\Ve$ takes any flower link $F(a)$ to $F(\ve(a))$, and so in particular $\Ve$ takes the flower link $F(010101)$ to $F(\ve(010101))=F(\IO{010101})=F(\In{010101})=F(010101)$. 

% *********************************************************
\begin{figure}[ht!]
\centering
\scalebox{0.8}{\input{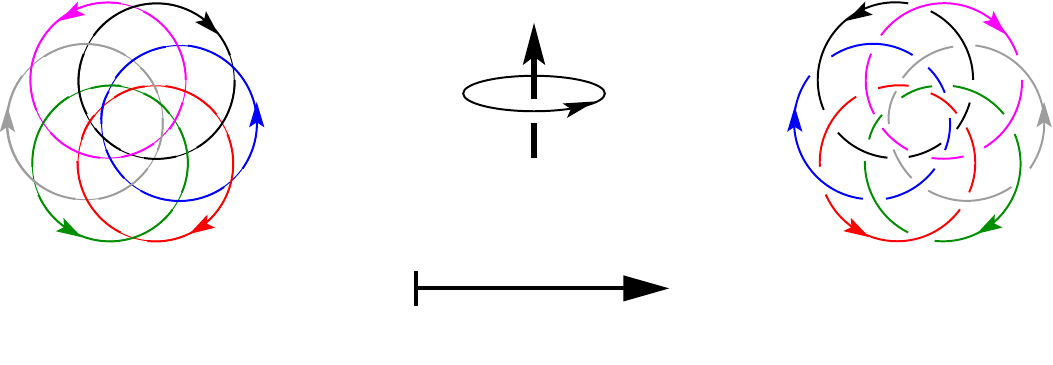_t}}
\caption{The isotopy $\Ve$ takes $F(010101)$ to $F(010101)$ (that is, to itself), with each component taken to a component with its correct orientation. More precisely, $\Ve$ takes the $i$-th component $F(010101)_i$ of $F(010101)$ to its $(6-i+1)$-st component $F(010101)_{6-i+1}$, for $i=1,\ldots,6$. Thus $\Ve$ witnesses that $(1,1,1,1,1,1,1,\reverse)$ is an intrinsic symmetry of $F(010101)$.}
\label{fig:f820}
\end{figure}
% *********************************************************

That is, $\Ve$ {\em takes $F(010101)$ to itself, and each component is taken to a component with its correct orientation}. That is, $\Ve$ takes $F(010101)_i$ to $1\cdot F(010101)_{6-i+1}$ for $i=1,\ldots,6$. Therefore $(1,1,1,1,1,1,1,(6\,5\,4\,3\,2\,1))=(1,1,1,1,1,1,1,\reverse)$ is an intrinsic symmetry of $F(010101)$.

The next result, which we obtained using {\tt SnapPy}, attests to the relevance of these two symmetries in the intrinsic symmetry groups of $F(010101)$ and $F(101010)$.

\begin{fact}\label{fac:snappflower}
The intrinsic symmetry group of $F(010101)$ is isomorphic to the dihedral group $D_6$, and it is generated by $(1,-1,-1,-1,-1,-1,-1,\rot{6})$ and $(1,1,1,1,1,1,1,\reverse)$. The intrinsic symmetry group of $F(101010)$ is identical. 
\end{fact}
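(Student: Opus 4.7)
The plan is to prove Fact~\ref{fac:snappflower} by establishing matching lower and upper bounds on the order of the intrinsic symmetry groups of $F(010101)$ and $F(101010)$. I would first confirm that both links are hyperbolic by feeding their diagrams into {\tt SnapPy} and verifying that a complete hyperbolic structure is found on each complement; Mostow rigidity then guarantees that the full symmetry group of each pair $(S^3,L)$ is finite and coincides with the isometry group of the hyperbolic complement, so it may legitimately be computed by software.

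For the lower bound, the idea is to exhibit twelve intrinsic symmetries by showing that the two proposed generators produce a copy of $D_6$. The isotopy $\Ro_{2\pi/6}$ of Figure~\ref{fig:f810} already witnesses $g_1=(1,-1,-1,-1,-1,-1,-1,\rot{6})$, and the isotopy $\Ve$ of Figure~\ref{fig:f820} already witnesses $g_2=(1,1,1,1,1,1,1,\reverse)$. A short direct check using the composition rule of Observation~\ref{obs:comprings}, together with the elementary identity $\reverse\,\rot{6}\,\reverse=\rot{6}^{-1}$, yields $g_1^{6}=g_2^{2}=(1,1,1,1,1,1,1,\identity)$ and $g_2 g_1 g_2 = g_1^{-1}$, which is the standard presentation of $D_6$. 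Hence $\langle g_1,g_2\rangle$ has order exactly $12$ inside the intrinsic symmetry group of $F(010101)$. The very same pair of witnessing isotopies applies verbatim to $F(101010)$, since the underlying unoriented diagram is identical, so the same $D_6$ sits inside its intrinsic symmetry group.

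For the matching upper bound I would follow the procedure described in~\cite[Section 3]{canta1}: ask {\tt SnapPy} for the full symmetry group of $(S^3, F(010101))$, which it returns as a finite group together with its action on cusps and on meridian-longitude pairs. From this action one reads off, for each element, the induced permutation of the components, the sign on each component (whether the orientation of that component is preserved or reversed), and the global $\pm 1$ on $S^3$. The resulting list is precisely the intrinsic symmetry group in the sense of~\cite{whitten}. The content of the fact is that this list has exactly $12$ entries, all with $+1$ in the first slot (reflecting that neither $F(010101)$ nor $F(101010)$ is amphichiral, something {\tt SnapPy} confirms automatically), and that these $12$ entries are exactly the elements of the $D_6$ already produced; repeating the computation for $F(101010)$ closes the argument. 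The main obstacle is precisely this upper bound: there is no lightweight combinatorial certificate ruling out further symmetries, and the rigor rests on the correctness of the {\tt SnapPy} isometry computation, as is standard for this kind of intrinsic symmetry group calculation.
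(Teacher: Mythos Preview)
Your approach is essentially the same as the paper's: the paper also observes that the isotopies $\Ro_{2\pi/6}$ and $\Ve$ witness the two generators, and then computes the full intrinsic symmetry group via {\tt SnapPy} following~\cite[Section~3]{canta1}, relying on hyperbolicity of the links. Your additional explicit verification of the dihedral relations among $g_1$ and $g_2$ is a nice touch, but otherwise the argument matches the paper's.
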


\subsection{The base case of the proof of Lemma~\ref{lem:oscflowers2}}\label{sub:smallflower}

The next statement is the main result in this section, which corresponds to the base case of the proof of Lemma~\ref{lem:oscflowers2}. As we shall see in the next section, even though this claim is stated in terms of oscillating links, and not in terms of oscillating sublinks (as Lemma~\ref{lem:oscflowers2}) this statement is indeed equivalent to the case $n=6$ of that lemma.

% ********************************************************************************************************

\begin{claim}\label{cla:oscflowers2}
Let $a=a_1a_2a_3a_4a_5a_6$ be an oscillating word of length $6$. Let $b=b_1b_2b_3b_4b_5b_6$ be a word such that $F(a)\sim F(b)$, and let $\ii$ be an $\isot{F(a)}{F(b)}$ isotopy. Then,
\begin{enumerate}
\item[(1)] the $(F(a),F(b))$-permutation under $\ii$ is either $\rot{6}^s$ or $\rot{6}^s\circ\reverse$, for some $s\in\{0,\ldots,5\}$; and 
\item[(2)] if $s$ is even then $b=a$, and if $s$ is odd then $b=\Ov{a}$.
\end{enumerate}
In particular, in any case $b$ is an oscillating word, and so $F(b)$ is an oscillating link.
\end{claim}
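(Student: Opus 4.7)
My plan mirrors the proofs of Claim~\ref{cla:oscrings2} and Claim~\ref{cla:oscboots2}, but now must exploit the full intrinsic symmetry group recorded in Fact~\ref{fac:snappflower}, which for these small flower links is the entire dihedral group $D_6$ rather than a smaller group. Since $a$ is oscillating of length $6$, we have $a\in\{010101,101010\}$, and $\Ov{a}$ is precisely the other oscillating word.

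First, I would establish that $b\in\{a,\Ov{a}\}$ by a {\tt SageMath} Jones polynomial computation: verify that $V_{F(010101)}$ (which must coincide with $V_{F(101010)}$, since $F(010101)\sim F(101010)$ via $\Ro_{2\pi/6}$) differs from the Jones polynomial of every flower link $F(c)$ of size $6$ with $c\notin\{010101,101010\}$. This immediately yields the ``in particular'' clause of the claim, and splits the remainder of the argument into two cases.

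Second, I would translate each case into an intrinsic symmetry of $F(a)$. If $b=a$, then $\ii$ maps every component of $F(a)$ to a component with its given orientation, so $(1,1,1,1,1,1,1,\pi)$ is an intrinsic symmetry of $F(a)$, where $\pi$ is the $(F(a),F(a))$-permutation under $\ii$. If $b=\Ov{a}$, then $F(b)$ is $F(a)$ with every component's orientation reversed, so $\ii$ sends each component of $F(a)$ to a component of $F(a)$ with reversed orientation, making $(1,-1,-1,-1,-1,-1,-1,\pi)$ an intrinsic symmetry of $F(a)$.

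Third, I would enumerate the twelve elements of the intrinsic symmetry group. Using the composition rule (if $\ii_1$ realizes $(1,\epsilon_1,\ldots,\epsilon_6,\pi)$ and $\ii_2$ realizes $(1,\delta_1,\ldots,\delta_6,\tau)$, then $\ii_1\circ\ii_2$ realizes $(1,\delta_i\,\epsilon_{\tau(i)},\pi\circ\tau)$), a direct computation starting from the generators $g_1=(1,-1,-1,-1,-1,-1,-1,\rot{6})$ and $g_2=(1,1,1,1,1,1,1,\reverse)$ of Fact~\ref{fac:snappflower} shows that every element of the group has a uniform sign vector: the uniform sign is $+1$ precisely when the permutation is $\rot{6}^s$ or $\rot{6}^s\circ\reverse$ with $s$ even, and it is $-1$ precisely when the permutation is $\rot{6}^s$ or $\rot{6}^s\circ\reverse$ with $s$ odd. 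Cross-referencing this with the second step yields (1) and (2) simultaneously: the $b=a$ case forces $s$ even, and the $b=\Ov{a}$ case forces $s$ odd.

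The main obstacle is pure bookkeeping: carrying out the enumeration of the twelve group elements with correct orientation signs and verifying the parity correspondence between $s$ and the sign. Everything else reduces to invoking Fact~\ref{fac:snappflower}, the {\tt SageMath} Jones-polynomial comparison, and the already-observed effects of $\Ro_{2\pi/n}$ and $\Ve$ on oscillating flower links.
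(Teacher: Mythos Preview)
Your proposal is correct and follows essentially the same approach as the paper: first use a {\tt SageMath} Jones-polynomial comparison to force $b\in\{a,\Ov{a}\}$, then read off from Fact~\ref{fac:snappflower} that every intrinsic symmetry of $F(a)$ has the form $(1,(-1)^s,\ldots,(-1)^s,\rot{6}^s)$ or $(1,(-1)^s,\ldots,(-1)^s,\rot{6}^s\circ\reverse)$, so that the uniform sign determines whether $b=a$ (sign $+1$, $s$ even) or $b=\Ov{a}$ (sign $-1$, $s$ odd). The only cosmetic difference is that the paper packages the sign/permutation data via the ``stamp'' $\Theta$ of $\ii$ rather than first splitting on $b=a$ versus $b=\Ov{a}$, but the underlying computation of the twelve group elements and the parity correspondence is identical.
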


In the proof we use the following terminology. Suppose that $\ii$ is an isotopy that maps a link $L=L_1\cup \cdots \cup L_n$ to itself if we ignore the orientations of its components. That is, $\ii$ takes $L_i$ to $\epsilon_i\cdot L_{\pi(i)}$ for $i=1,\ldots,n$, where $\epsilon_i\in\{-1,1\}$ for $i=1,\ldots,n$. That is, $\ii$ witnesses that $L$ admits the intrinsic symmetry $(1,\epsilon_1,\ldots,\epsilon_n,\pi)$. Then we say that $(\epsilon_1,\ldots,\epsilon_n,\pi)$ is the {\em stamp of $\ii$ over $L$}. We emphasize that the stamp of an isotopy over a link exists if and only if the isotopy maps the link to itself, if we ignore the orientations of its components.

\begin{proof}
We start by pointing out that the closing sentence of the claim (that $b$ is oscillating) follows immediately from (2). However, as it happens, we need an independent verification of this fact in order to prove (1) and (2). 

This is an easy task using {\tt SageMath}: we found out that the Jones polynomials $V_{F(010101)}$ of $F(010101)$ and $V_{F(101010)}$ of $F(101010)$ are the same (this was of course expected, since these links are equivalent), and the Jones polynomial of any other flower link of size $6$ is distinct from $V_{F(010101)}$. Thus $b$ is either $a$ or $\Ov{a}$. 

Let $\ii$ be an $F(a)\mapsto F(b)$ isotopy, and let $\pi$ be the $(F(a),F(b))$-permutation under $\ii$. Since $b$ is either $a$ or $\Ov{a}$ it follows that $F(a)$ and $F(b)$ are equivalent if we ignore the orientations of their components, and so $\ii$ has a stamp $\Theta=(\epsilon_1,\ldots,\epsilon_6,\pi)$. Thus $(1,\epsilon_1,\ldots,\epsilon_6,\pi)$ is an intrinsic symmetry of $F(a)$, and so Fact~\ref{fac:snappflower} implies that $\Theta$ is either $(-1^{s},-1^{s},-1^{s},-1^{s},-1^{s},-1^{s},\rot{6}^{s})$ or $(-1^{s},-1^{s},-1^{s},-1^{s},-1^{s},-1^{s},\rot{6}^{s}\circ\reverse)$ for some $s\in \{0,\ldots,5\}$. In particular, $\pi$ is either $\rot{6}^{s}$ or $\rot{6}^s\circ\reverse$ for some $s\in\{0,\ldots,5\}$. Thus (1) holds. 

If $s$ is even then $\Theta$ is either $(1,1,1,1,1,1,\rot{6}^s)$ or $(1,1,1,1,1,1,\rot{6}^s\circ\reverse)$. In either case $\ii$ maps each component of $F(a)$ to a component of itself with its correct orientation, and so $b=a$. 

Finally, if $s$ is odd then $\Theta$ is either $(-1,-1,-1,-1,-1,-1,\rot{6}^s)$ or $(-1,-1,-1,-1,-1,-1,\rot{6}^s\circ\reverse)$. In either case $\ii$ maps each component of $F(a)$ to a component of $F(b)$ with its orientation reversed, and so $b=\Ov{a}$.
\end{proof}

\section{Proof of Lemma~\ref{lem:oscflowers2}}\label{sec:proofoscflowers2}

As in the proofs of Lemmas~\ref{lem:oscrings2} and~\ref{lem:oscboots2}, sublinks of flower links will play a central role in the proof of Lemma~\ref{lem:oscflowers2}, and so we start the discussion by laying out some basic facts about them.

\subsection{Sublinks of flower links are equivalent to flower links}

Let $a=a_1\ldots a_n$ be a  word, and let $i_1,\ldots,i_k$ be integers such that $1 \le i_1 < \cdots < i_k \le n$. Then $a_{i_1}\cdots a_{i_k}$ is a subword of $a$, and this subword naturally corresponds to a sublink $F(a)_{i_1}\cup \cdots \cup F(a)_{i_k}$ of $F(a)$. For brevity, we use $F(a)_{i_1,\ldots,i_k}$ to denote this link. 

Similarly as in the cases of ring links and boot links, as we illustrate in Figure~\ref{fig:f800} the special structure of the arrangement $\ff_n$ implies that the sublink $F(a)_{i_1,\ldots,i_k}$ of $F(a)$ is equivalent to the flower link $F(a_{i_1} \cdots a_{i_k})$, via a {strong} isotopy. Loosely speaking, one can ``bring together'' some components of $F(a)_{i_1,\ldots,i_k}$ until all the components are placed exactly in the same way as the components of $F(a_{i_1}\cdots a_{i_k})$, and we can also reverse this process to take the components of $F(a_{i_1}\cdots a_{i_k})$ to $F(a)_{i_1,\ldots,i_k}$. 

\begin{observation}\label{obs:subf}
Let $a=a_1\cdots a_n$ be a word. If $a_{i_1}\cdots a_{i_k}$ is any subword of $a$, then there exists an $\isop{F(a)_{i_1,\ldots,i_k}}{F(a_{i_1} \cdots a_{i_k})}{\identity}$ isotopy, and there exists an $\isop{F(a_{i_1}\cdots a_{i_k})}{F(a)_{i_1,\ldots,i_k}}{\identity}$ isotopy. 
\end{observation}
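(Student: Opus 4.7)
The plan is to mimic the arguments used for Observations~\ref{obs:subr} and~\ref{obs:subs}, exploiting the special combinatorial/geometric structure of the flower arrangement~$\ff_n$. Recall that in $\ff_n$ every pseudocircle passes through a common central region, and the pseudocircles are ordered cyclically around this center with the labels $1,\ldots,n$ appearing clockwise from the topmost-right position. Thus each component of $F(a)$ is, up to isotopy in a thin annular neighborhood of the center, a ``petal'' attached in a cyclic slot, and the $k$-component flower link $F(a_{i_1}\cdots a_{i_k})$ is obtained by placing $k$ such petals in the standard $k$-slot cyclic configuration.

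First I would construct a strong $\isop{F(a)_{i_1,\ldots,i_k}}{F(a_{i_1}\cdots a_{i_k})}{\identity}$ isotopy. Since the sublink $F(a)_{i_1,\ldots,i_k}$ uses only the petals in the slots $i_1<\cdots<i_k$, the slots corresponding to indices in $[n]\setminus\{i_1,\ldots,i_k\}$ are empty and can be ``closed up'' by a continuous ambient isotopy that slides the retained petals angularly together around the center until they occupy the $k$ evenly spaced slots of the standard $k$-component flower arrangement. This sliding is performed in the order-preserving manner dictated by the cyclic labeling, so the $j$-th component $F(a)_{i_j}$ of $F(a)_{i_1,\ldots,i_k}$ is taken to the $j$-th component of $F(a_{i_1}\cdots a_{i_k})$ for $j=1,\ldots,k$, and the associated orientations (inherited from the bits $a_{i_j}$) are preserved throughout. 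Since the crossing pattern of $\ff_n$ among the retained pseudocircles is identical to the crossing pattern of $\ff_k$ on the subword, no crossings are created, destroyed, or altered; hence the overall transformation is an ambient isotopy in $\mathbb{R}^3$ whose induced permutation on components is the identity~$\identity$. The reverse direction is obtained simply by running this sliding process backwards: starting from the standard $k$-slot flower configuration of $F(a_{i_1}\cdots a_{i_k})$, ``spread apart'' the petals into the slots $i_1,\ldots,i_k$ of the $n$-slot arrangement, leaving the remaining slots empty; this yields a strong $\isop{F(a_{i_1}\cdots a_{i_k})}{F(a)_{i_1,\ldots,i_k}}{\identity}$ isotopy.

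The main obstacle (and the one genuinely geometric point that needs to be checked) is that the ``sliding of petals'' truly is realized by an ambient isotopy in $\mathbb{R}^3$ rather than only a homotopy of diagrams. This is where the positivity of the induced link enters: because every crossing in $F(a)$ was chosen to be positive (i.e.\ determined by the orientations coming from $a$), the local over/under assignment at each crossing is preserved under angular sliding, and no forbidden Reidemeister moves are required. Thus the diagrammatic deformation lifts to an honest ambient isotopy, and the proof is complete. Once Observation~\ref{obs:subf} is established, the analogue of Observation~\ref{obs:subr2} for flower links follows by exactly the same composition argument as in Section~\ref{sub:isotopies}, using Observation~\ref{obs:comprings}.
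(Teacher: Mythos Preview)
Your approach is correct and matches the paper's own treatment: the paper gives no formal proof but simply says one can ``bring together'' the retained components until they sit in the standard $k$-petal flower configuration, illustrated by Figure~\ref{fig:f800}, and that this process is reversible---exactly your angular sliding of petals.

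One remark: your final paragraph misidentifies the role of positivity. The angular sliding of petals is an honest ambient isotopy in $\mathbb{R}^3$ \emph{regardless} of the over/under assignments at the crossings, simply because you are moving the components rigidly in space without passing any strand through another; no Reidemeister moves are involved at all. Positivity is how the link $F(a)$ was \emph{defined} from the oriented arrangement, not an ingredient needed to justify the isotopy. You can safely drop that paragraph.
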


\def\Btg{{\tg{B(00\gl{0}1\gl{0}\gl{1})}}}
\def\tf#1{{\Scale[1.7]{#1}}}
\def\tg#1{{\Scale[1.8]{#1}}}
\def\th#1{{\Scale[3.2]{#1}}}
\def\tj#1{{\Scale[4.0]{#1}}}

\def\so#1{{\Scale[1.8]{#1}}}
\def\sp#1{{\Scale[1.2]{#1}}}
\def\sq#1{{\Scale[1.3]{#1}}}

% ********************************************************************************************************
% ********************************************************************************************************
% ********************************************************************************************************
% ********************************************************************************************************

\def\Btg{{\tg{F(01\gl{0}10\gl{1})}}}
\def\tf#1{{\Scale[1.7]{#1}}}
\def\tg#1{{\Scale[1.3]{#1}}}
\def\th#1{{\Scale[3.2]{#1}}}
\def\tj#1{{\Scale[4.0]{#1}}}

% *********************************************************
\begin{figure}[ht!]
\centering
\scalebox{0.8}{\input{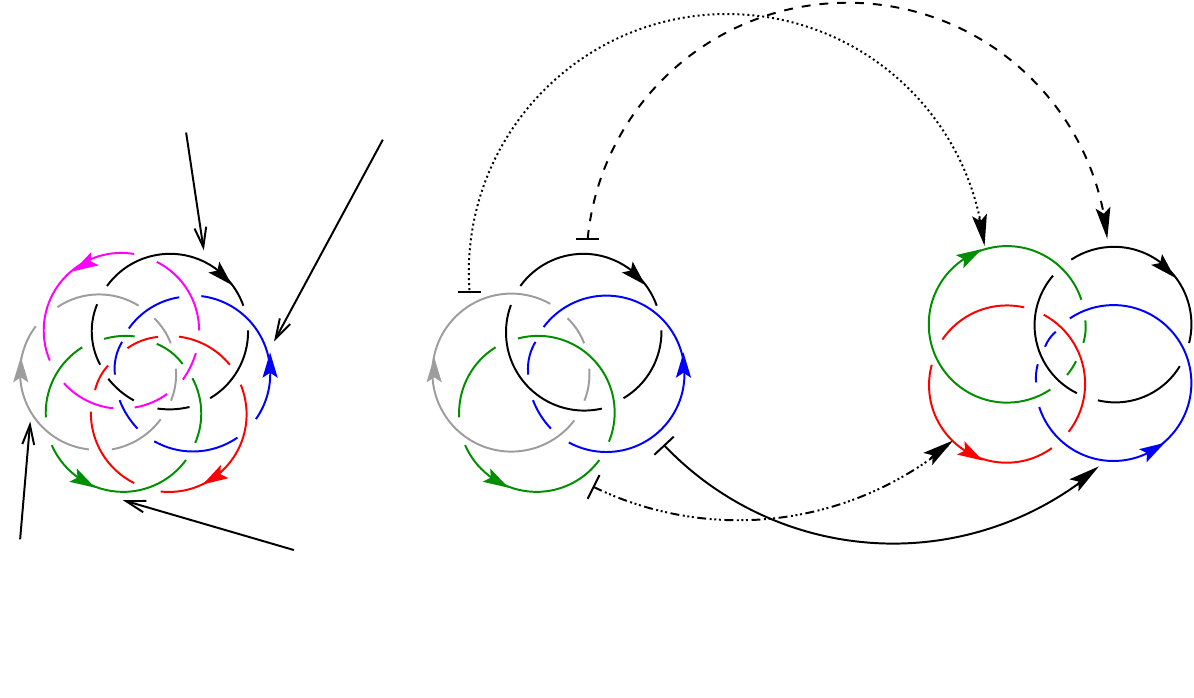_t}}
\caption{Illustration of Observation~\ref{obs:subf}.}
\label{fig:f800}
\end{figure}
% *********************************************************

\def\so#1{{\Scale[1.8]{#1}}}
\def\sp#1{{\Scale[1.2]{#1}}}
\def\sq#1{{\Scale[1.3]{#1}}}

We thus obtain the following crucial statement, which parallels Observation~\ref{obs:subr2}.

\begin{observation}\label{obs:subf2}
Let $a=a_1\cdots a_n$ and $b=b_1\cdots b_n$ be words. Let $1\le i_1 < \cdots <i_k \le n$ and $1\le j_1 < \cdots < j_k \le n$ be integers, and let $\pi$ be a permutation of $[k]$. Then, there exists an $\isop{F(a)_{i_1,\ldots,i_k}}{F(b)_{j_1,\ldots,j_k}}{\pi}$ isotopy if and only if there exists an $\isop{F(a_{i_1}\cdots a_{i_k})}{F(b_{j_1}\cdots b_{j_k})}{\pi}$ isotopy.
\end{observation}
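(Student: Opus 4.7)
The plan is to establish Observation~\ref{obs:subf2} by mimicking, almost verbatim, the proof of Observation~\ref{obs:subr2} (the ring-link analogue). The two ingredients I will need are Observation~\ref{obs:subf}, which supplies strong isotopies between a sublink $F(a)_{i_1,\ldots,i_k}$ of a flower link and the ``compressed'' flower link $F(a_{i_1}\cdots a_{i_k})$, and Observation~\ref{obs:comprings}, which records how permutations behave under composition of isotopies. Since the two implications are symmetric, it suffices to prove one direction in detail and remark that the other follows identically by swapping the roles of $a$ and $b$.

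For the ``if'' direction, I will start with an $\isop{F(a_{i_1}\cdots a_{i_k})}{F(b_{j_1}\cdots b_{j_k})}{\pi}$ isotopy $\ii$. Observation~\ref{obs:subf} provides a strong isotopy $\jj$ from $F(a)_{i_1,\ldots,i_k}$ to $F(a_{i_1}\cdots a_{i_k})$ (so $\Isop{\jj}{F(a)_{i_1,\ldots,i_k}}{F(a_{i_1}\cdots a_{i_k})}{\identity}$), and a second strong isotopy $\kk$ from $F(b_{j_1}\cdots b_{j_k})$ to $F(b)_{j_1,\ldots,j_k}$ (so $\Isop{\kk}{F(b_{j_1}\cdots b_{j_k})}{F(b)_{j_1,\ldots,j_k}}{\identity}$). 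Then $\kk\circ\ii\circ\jj$ is an isotopy from $F(a)_{i_1,\ldots,i_k}$ to $F(b)_{j_1,\ldots,j_k}$; two applications of Observation~\ref{obs:comprings} give that its associated permutation is $\identity\circ\pi\circ\identity=\pi$, as required.

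For the ``only if'' direction, given an $\isop{F(a)_{i_1,\ldots,i_k}}{F(b)_{j_1,\ldots,j_k}}{\pi}$ isotopy, I apply Observation~\ref{obs:subf} in the opposite direction to obtain strong isotopies from $F(a_{i_1}\cdots a_{i_k})$ into $F(a)_{i_1,\ldots,i_k}$ and from $F(b)_{j_1,\ldots,j_k}$ back to $F(b_{j_1}\cdots b_{j_k})$, and compose in the same fashion; the resulting permutation is again $\identity\circ\pi\circ\identity=\pi$. There is no substantive obstacle here: the whole content has already been extracted in the proof of Observation~\ref{obs:subr2}, and all that needs checking is that Observation~\ref{obs:subf} (which has already been asserted) furnishes strong isotopies with the identity permutation in both directions. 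The proof is therefore essentially a one-paragraph translation of the earlier ring-link argument.
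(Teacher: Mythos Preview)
Your proposal is correct and matches the paper's approach exactly: the paper does not even write out a separate proof for this observation, simply noting that it ``parallels Observation~\ref{obs:subr2}'', and your argument is precisely that parallel, replacing Observation~\ref{obs:subr} by Observation~\ref{obs:subf} and composing via Observation~\ref{obs:comprings}.
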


\subsection{Proof of Lemma~\ref{lem:oscflowers2}}

Even though in principle it is possible to prove Lemma~\ref{lem:oscflowers2} in its given form, it turns out to be easier to establish instead the following proposition, stated in terms of flower links instead of in terms of sublinks of flower links. The equivalence of this statement with Lemma~\ref{lem:oscflowers2} follows from Observation~\ref{obs:subf2}.

\begin{lemma}[Equivalent to Lemma~\ref{lem:oscflowers2}]\label{lem:flowers0}
Let $a=a_1 \cdots a_n$ be an oscillating word of even length $n\ge 6$. Let $b$ be a word such that $F(a)\sim F(b)$, and let $\ii$ be an $\isot{F(a)}{F(b)}$ isotopy. Then,
\begin{enumerate}
\item[(1)] the $(F(a),F(b))$-permutation under $\ii$ is either $\rot{r}^s$ or $\rot{r}^{s}\circ\reverse$ for some $s\in\{0,\ldots,n-1\}$; and
\item[(2)] if $s$ is even then $b=a$, and it is odd then $b=\Ov{a}$.
\end{enumerate}
In particular, in any case $b$ is oscillating.
\end{lemma}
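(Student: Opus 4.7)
The plan is to prove the equivalent Lemma~\ref{lem:flowers0} by induction on $n$, with base case $n=6$ supplied by Claim~\ref{cla:oscflowers2}; since the hypothesis restricts $n$ to even values, the induction will proceed from even $n\ge 6$ to $n+2$. For the inductive step, given $a=a_1\cdots a_{n+2}$ oscillating, a word $b$ with $F(a)\sim F(b)$, and an $\isot{F(a)}{F(b)}$ isotopy $\ii$ with associated permutation $\pi$ of $[n+2]$, the first move is to apply the inductive hypothesis to a family of oscillating sublinks of size $n$ obtained by deleting two cyclically consecutive pseudocircles. Concretely, for each $k\in\{1,\ldots,n+2\}$ let $S_k=[n+2]\setminus\{k,k\opi{n+2}1\}$; removing two cyclically consecutive indices from an even-length oscillating word preserves the alternation of parities, so $a|_{S_k}$ is oscillating of length $n$. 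Writing $T_k$ for the sorted image $\pi(S_k)$ and $\tau_k$ for the induced permutation of $[n]$, Observation~\ref{obs:subf2} supplies an $\isop{F(a|_{S_k})}{F(b|_{T_k})}{\tau_k}$ isotopy, to which the inductive hypothesis applies: $\tau_k$ lies in the dihedral group generated by $\rot{n}$ and $\reverse$, and $b|_{T_k}$ equals either $a|_{S_k}$ or $\Ov{a|_{S_k}}$ depending on the parity of the associated rotation amount.

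The core step will be to combine these restricted dihedral constraints into the global conclusion that $\pi$ is either $\rot{n+2}^s$ or $\rot{n+2}^s\circ\reverse$. The plan is to focus on the two overlapping size-$n$ subsets $S_{n+1}=\{1,\ldots,n\}$ and $S_{n+2}=\{2,\ldots,n+1\}$, which share the $(n-1)$-element block $\{2,\ldots,n\}$. Composing $\ii$ on the left with a suitable power of $\Ro_{2\pi/(n+2)}$ (and, by Observation~\ref{obs:comprings}, correspondingly replacing $b$ by the appropriate iterate of $\ro$) allows us to assume that $\pi(1)=1$; analyzing the dihedral action $\tau_{n+1}$ then splits into two subcases according to whether $\tau_{n+1}$ is a pure cyclic shift or a cyclic shift composed with reflection, and in the reflection subcase a further composition with $\Ve$ returns us to the cyclic-shift subcase, yielding $\pi(1)<\pi(2)<\cdots<\pi(n)$. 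Applying the same dichotomy to $\tau_{n+2}$ while recording that $\pi|_{\{2,\ldots,n\}}$ is already monotone increasing forces $\pi(n+1)$ to be either greater than $\pi(n)$ or smaller than $\pi(2)$, and a parallel analysis with the subset $S_1=\{3,\ldots,n+2\}$ pins down $\pi(n+2)$. Tracking the normalizations backwards through the compositions then recovers that the original $\pi$ has the claimed form.

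Conclusion~(2) will then follow by inspecting the parity output of the inductive hypothesis: for each $k$, $b|_{T_k}$ equals $a|_{S_k}$ or $\Ov{a|_{S_k}}$ according to whether the rotation amount producing $\tau_k$ is even or odd. Taking two subsets $S_k$ whose sorted images $T_k$ together cover all of $[n+2]$ determines $b$ globally as either $a$ or $\Ov{a}$, and a direct parity check against the shift amount $s$ recovered in~(1) confirms that the even-$s$ case forces $b=a$ and the odd-$s$ case forces $b=\Ov{a}$.

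The hardest part will be the combinatorial case analysis that combines the two overlapping dihedral restrictions on $S_{n+1}$ and $S_{n+2}$ into a global dihedral structure on $[n+2]$. The subtlety is that the cyclic order governing $\tau_{n+1}$ closes the subset $\{1,\ldots,n\}$ by declaring $1$ and $n$ cyclically adjacent, which does not match the adjacency of $1$ with $n+2$ (or of $n$ with $n+1$) in the ambient cyclic order of $[n+2]$; one must trace through the possible rotation amounts within each sublink and verify that only those compatible with a genuine element of $D_{n+2}$ survive the joint constraint imposed by the overlap on $\{2,\ldots,n\}$.
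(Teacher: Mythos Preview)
Your proposal is correct and follows essentially the same inductive strategy as the paper: induction on even $n$ with base case $n=6$ from Claim~\ref{cla:oscflowers2}, removing two cyclically adjacent components to obtain oscillating sublinks of size $n$ and applying the inductive hypothesis via Observation~\ref{obs:subf2} to constrain $\pi$. The paper organizes the combinatorial step slightly differently---using the language of $(m+2)$-consistent/anticonsistent sequences on the two sublinks $\{1,\ldots,m\}$ and $\{1,4,5,\ldots,m+2\}$ rather than normalizing by precomposing with $\Ro_{2\pi/(n+2)}$ and $\Ve$---but this is a tactical rather than a strategic difference.
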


%Before we proceed to the proof we make an elementary but crucial remark. Let $S_n$ be a circle with $n$ points evenly spaced on it, labelled $1,\ldots,n$ in this clockwise order. We say that a sequence $j_1,\ldots,j_k$ of $k$ distinct integers in $[n]$ is {\em $n$-consistent} (respectively, {\em $n$-anticonsistent}) if $j_1,\ldots,j_k$ appear in $S_n$ in this clockwise (respectively, anticlockwise) order. We make essential use of the following remark in the proof of Lemma~\ref{lem:oscrings0}: {\em if a sequence $j_1,\ldots,j_n$ of length $n$ is $n$-consistent (respectively, $n$-anticonsistent) then the permutation $(j_1\, j_2\, \cdots \, j_n)$ is $\rot{n}^{s}$ (respectively, $\rot{n}^{s}\circ\reverse$) for some $s\in \{\0,\ldots,n-1\}$.}

Before we proceed to the proof we make an elementary observation on sequences. We say that a sequence $j_1,\ldots,j_k$ of $k$ distinct integers in $[n]$ is {\em $n$-consistent} (respectively, {\em $n$-anticonsistent}) if there is an integer $t\in\{1,\ldots,k\}$ such that $j_t, j_{t+1}, \ldots, j_k, j_1, j_2, \ldots, j_{t-1}$ is increasing (respectively, decreasing). We make essential use of the following trivial observation in the proof of Lemma~\ref{lem:oscrings0}.

\begin{remark}\label{rem:seqeasy}
If a sequence $j_1,\ldots,j_n$ of distinct integers in $[n]$ is $n$-consistent (respectively, $n$-anticonsistent) then the permutation $(j_1\, j_2\, \cdots \, j_n)$ is $\rot{n}^{s}$ (respectively, $\rot{n}^{s}\circ\reverse$) for some $s\in \{0,\ldots,n-1\}$.
\end{remark}

\begin{proof}[Proof of Lemma~\ref{lem:oscflowers2}]
We proceed by induction on the length $n$ of $a$. Claim~\ref{cla:oscflowers2} establishes the statement for the case $n =6$. For the inductive step we let $m\ge 6$ be an even integer, assume that the proposition holds for oscillating words of length $m$, and prove that then it holds for an oscillating word of length $m+2$. 

Thus we let $a=a_1\cdots a_m a_{m+1} a_{m+2}$ be an oscillating word, let $b=b_1 \cdots b_m b_{m+1} b_{m+2}$ be a word such that $F(a)\sim F(b)$, and let $\ii$ be an $\isot{F(a)}{F(b)}$ isotopy. Our goal is to show (1) and (2). We let $\pi$ denote the $(F(a),F(b))$-permutation under $\ii$, and so we must show that (I) $\pi$ is either $\rot{m+2}^s$ or $\rot{m+2}^s\circ\reverse$ for some $s\in\{0,\ldots,m+1\}$; and that (II) if $s$ is even then $b=a$, and if $s$ is odd then $b=\Ov{a}$.

Before we move on to the main arguments of the inductive step let us make {the following} trivial observations, {obtained} simply because $a_1 a_2 \cdots a_{m+1} a_{m+2}$ is oscillating and $m$ is even:

\begin{enumerate}
\item[$\S$] $a_1 a_2 \cdots a_m = (a_1a_2)^{m/2}$ and $\Ov{a_1 a_2 \cdots a_m} = (\Ov{a_1a_2})^{m/2}$;  and 
\item[$\S\S$] $a_3 a_4 \cdots a_{m+2} = (a_3 a_4)^{m/2}=(a_1a_2)^{m/2}$ and $\Ov{a_3 a_4 \cdots a_{m+2}} = (\Ov{a_3a_4})^{m/2}=(\Ov{a_1a_2})^{m/2}$.
\end{enumerate}

Let $F(a)_{i_1,\ldots,i_{m}}$ be any oscillating sublink of $F(a)$ of size $m$, and let $F(b)_{j_1,\ldots,j_m}$ be the sublink of $F(b)$ that is the image of $F(a)_{i_1,\ldots,i_{m}}$ under $\ii$. We note that $j_1,\ldots,j_m$ is a sequence of $m$ distinct integers in $[m+2]$, and as such it might be $(m+2)$-consistent or $(m+2)$-anticonsistent (or neither). 

By Observation~\ref{obs:subf2}, the induction hypothesis implies that there is an $s \in\{0,\ldots,m-1\}$ such that the $({F(a)_{i_1,\ldots,{i_m}}},{F(b)_{j_1,\ldots,j_m}})$-permutation under ${\ii}$ is either $\rot{m}^{s}$ or $\rot{m}^{s}\circ\reverse$ (we emphasize, with $\rot{}$ and $\reverse$ acting on $[m]$). It is easy to see that in the former case $\pi(i_1),\ldots,\pi(i_m)$ is $(m+2)$-consistent, and in the latter case it is $(m+2)$-anticonsistent. %%% Maybe, just maybe, say a little more here.

We now apply this discussion to two particular oscillating sublinks of $F(a)$. Since $a$ is oscillating, then $a_{1}a_{2}\cdots a_{m-1}a_{m}$ and $a_1 a_4 a_5 \cdots a_{m+1} a_{m+2}$ are both oscillating sublinks of size $m$. The remark at the end of the previous paragraph then implies that:

$\bullet$ {$\pi(1),\pi(2),\ldots,\pi(m-1),\pi(m)$ is either 
(i) $(m+2)$-consistent or  (ii) $(m+2)$-anticonsistent}; and

$\bullet$ {$\pi(1),\pi(4),\pi(5)\ldots,\pi(m+1),\pi(m+2)$ is either (iii) $(m+2)$-consistent or (iv) $(m+2)$-anticonsistent.}

It is easy to see that (i) and (iv) cannot simultaneously hold, and that (ii) and (iii) cannot simultaneously hold. So either (i) and (iii) hold, or (ii) or (iv) hold.

It is straightforward to check that if (i) and (iii) hold, the the sequence $\pi(1),\pi(2),\ldots,\pi(m),$ $\pi(m+1),$ $\pi(m+2)$ is $(m+2)$-consistent, and if (ii) and (iv) hold, then this sequence is $(m+2)$-anticonsistent. Remark~\ref{rem:seqeasy} then implies that in the former case $\pi$ is $\rot{m+2}^{s}$ for some $s\in\{0,\ldots,m+1\}$, and in the latter case $\pi$ is $\rot{m+2}^{s}\circ\reverse$ for some $s\in\{0,\ldots,m+1\}$. We have thus proved part (I) of the inductive step.

To prove (II), suppose first that $\pi=\rot{m+2}^s$ for some $s\in\{0,\ldots,m+1\}$. Then $\ii$ takes $F(a)_{1,\ldots,m}$ to $F(b)_{1,2,\ldots,s-3,s-2,s+1,s+2,\ldots,m+1,m+2}$, and the $(F(a)_{1,\ldots,m},F(b)_{1,2,\ldots,s-3,s-2,s+1,s+2,\ldots,m+1,m+2})$-permu\-tation under $\ii$ is $\rot{}^{s}$ (with $\rot{}$ acting on $[m]$). By Observation~\ref{obs:subf2}, the induction hypothesis then implies that {if $s$ is even then $b_{1} b_2 \cdots b_{s-3} b_{s-2} b_{s+1} b_{s+2}\cdots b_{m+1} b_{m+2}=a_1 \cdots a_{m}$, and if $s$ is odd then $b_{1} b_2 \cdots b_{s-3} b_{s-2} b_{s+1} b_{s+2}\cdots b_{m+1} b_{m+2}=\Ov{a_1 \cdots a_{m}}$.} 

In view of ($\S$) and ($\S\S$), we conclude that 
$$\hbox{($*$) {\em if $s$ is even then $b_{1} b_2 \cdots b_{s-2} b_{s+1} \cdots b_{m+2}$ is $(a_1a_2)^{m/2}$, and if $s$ is odd then it is $(\Ov{a_1 a_2})^{m/2}$.}}$$
We now note that $\ii$ takes $F(a)_{3,4\ldots,m+1,m+2}$ to $F(b)_{1,2,\ldots,s-1,s,s+3,s+4,\ldots,m+1,m+2}$, and similarly as in the previous case the $(F(a)_{1,\ldots,m},F(b)_{1,2,\ldots,s-1,s,s+3,s+4,\ldots,m+1,m+2})$-permu\-tation under $\ii$ is $\rot{}^{s}$ (with $\rot{}$ acting on $[m]$). By Observation~\ref{obs:subf2}, the induction hypothesis then implies that {if $s$ is even then $b_{1} b_2 \cdots b_{s-1} b_{s} b_{s+3} b_{s+4}\cdots b_{m+1} b_{m+2}=a_3 \cdots a_{m+2}$, and if $s$ is odd then $b_{1} b_2 \cdots b_{s-1} b_{s} b_{s+3} b_{s+4}\cdots b_{m+1} b_{m+2}=\Ov{a_3 \cdots a_{m+2}}$}.

Using ($\S$) and ($\S\S$), we conclude that 
$$\hbox{($**$) {\em if $s$ is even then $b_{1} b_2 \cdots b_{s} b_{s+3} \cdots b_{m+2}$ is $(a_1a_2)^{m/2}$, and if $s$ is odd then it is $(\Ov{a_1 a_2})^{m/2}$.}}$$
Combining ($*$) and ($**$) we obtain that if $s$ is even then $b_1 b_2\cdots b_{m+2}=(a_1a_2)^{(m/2)+1}$ (that is, $b=a$), and if $s$ is odd then $b_1 b_2 \cdots b_{m+2}=\Ov{a_1a_2}^{(m/2)+1}$ (that is, $b=\Ov{a}$). Thus (II) holds.

Suppose finally that $\pi=\rot{m+2}^s\reverse$ for some $s\in\{0,\ldots,m+1\}$. Then $\ii$ takes $F(a)_{1,\ldots,m}$ to $F(b)_{1,2,\ldots,s-1,s,s+3,s+4,\ldots,m+1,m+2}$, and the $(F(a)_{1,\ldots,m},F(b)_{1,2,\ldots,s-1,s,s+3,s+4,\ldots,m+1,m+2})$-permutation under $\ii$ is $\rot{}^{s}\circ\reverse$ (with $\rot{}$ and $\reverse$ acting on $[m]$). By Observation~\ref{obs:subf2}, the induction hypothesis then implies that {if $s$ is even then $b_{1} b_2 \cdots b_{s-1} b_{s} b_{s+3} b_{s+4}\cdots b_{m+1} b_{m+2}=a_1 a_{2} \cdots a_{m}$, and if $s$ is odd then $b_{1} b_2 \cdots b_{s-1} b_{s} b_{s+3} b_{s+4}\cdots b_{m+1} b_{m+2}=\Ov{a_1 a_2 \cdots a_m}$}. 

In view of ($\S$) and ($\S\S$), we conclude that 
$$\hbox{($\dag$) {\em if $s$ is even then $b_{1} b_2 \cdots b_{s} b_{s+3} \cdots b_{m+2}$ is $(a_1a_2)^{m/2}$, and if $s$ is odd then it is $(\Ov{a_1 a_2})^{m/2}$.}}$$
We now note that $\ii$ takes $F(a)_{3,4,\ldots,m+1,m+2}$ to $F(b)_{1,2,\ldots,s-3,s-2,s+1,s+2,\ldots,m+1,m+2}$, and \\
the $(F(a)_{3,4,\ldots,m+1,m+2},F(b)_{1,2,\ldots,s-3,s-2,s+1,s+2,\ldots,m+1,m+2})$-permutation under $\ii$ is $\rot{}^{s}\circ\reverse$ (with $\rot{}$ and $\reverse$ acting on $[m]$). By Observation~\ref{obs:subf2}, the induction hypothesis then implies that if $s$ is even then $b_{1} b_2 \cdots b_{s-3} b_{s-2} b_{s+1} b_{s+2}\cdots b_{m+1} b_{m+2}=a_3 a_{4} \cdots a_{m+1} a_{m+2}$, and if $s$ is odd then $b_{1} b_2 \cdots b_{s-1} b_{s} b_{s+3} b_{s+4}\cdots b_{m+1} b_{m+2}=\Ov{a_3 a_{4} \cdots a_{m+1} a_{m+2}}$. 

Using  ($\S$) and ($\S\S$), we conclude that 
$$\hbox{($\ddag$) {\em if $s$ is even then $b_{1} b_2 \cdots b_{s-2} b_{s+1} \cdots b_{m+2}$ is $(a_1a_2)^{m/2}$, and if $s$ is odd then it is $(\Ov{a_1 a_2})^{m/2}$.}}$$
Combining ($\dag$) and ($\ddag$) we obtain that if $s$ is even then $b_1 \cdots b_{m+2}=(a_1a_2)^{(m/2)+1}$ (that is, $b=a$), and if $s$ is odd then $b_1 \cdots b_{m+2}=(\Ov{a_1a_2})^{(m/2)+1}$ (that is, $b=\Ov{a}$). Therefore, (II) holds.
\end{proof}

\section{Concluding Remarks}\label{sec:concludingremarks}

Throughout this paper we worked exclusively with oriented links, and in particular our main results (namely Theorems~\ref{thm:arrr},~\ref{thm:arrs}, and~\ref{thm:arrf}) are stated in terms of oriented links. However, as we briefly mentioned in Section~\ref{sec:intro}, it is possible to derive analogous statements for unoriented links. 

Given an arrangement $\aa$, we use $\linksu{\aa}$ to denote the collection of all positive {\em unoriented} links that project to $\aa$. Similarly as for oriented links, we let $\numbu{\aa}$ denote the number of non-equivalent links in $\linksu{\aa}$ (that is, the number of equivalence classes in $\linksu{\aa}$).

We have the following consequences of Theorems~\ref{thm:arrr},~\ref{thm:arrs}, and~\ref{thm:arrf}, respectively.

\begin{corollary}[The number of positive unoriented links that project to $\rr_n$]\label{cor:uarrr}
\[
\numbu{\rr_n} = \biggl(\frac{1}{4} + o(n)\biggr) \cdot 2^n.
\]
\end{corollary}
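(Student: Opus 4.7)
The plan is to prove Corollary~\ref{cor:uarrr} by establishing $\numbu{\rr_n} = \numbor{\rr_n}$, so that the conclusion follows directly from Theorem~\ref{thm:arrr}. This reduces to exhibiting a bijection between equivalence classes of oriented links in $\linksor{\rr_n}$ and equivalence classes of unoriented links in $\linksu{\rr_n}$.

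First I will observe that reversing the orientations of all $n$ pseudocircles of $\rr_n$ simultaneously preserves the positive diagram: at every crossing, reversing both strand orientations at once preserves the crossing's sign and hence the over/under information that determines positivity. Thus $R(a)$ and $R(\Ov{a})$ share the same underlying unoriented diagram, so the forgetful map $\linksor{\rr_n}\to\linksu{\rr_n}$ is $2$-to-$1$, and it descends to a surjection $\Phi$ from oriented equivalence classes onto unoriented equivalence classes.

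The crucial step is the injectivity of $\Phi$. Suppose that $R(a)$ and $R(b)$ are equivalent as unoriented links, witnessed by an ambient isotopy $\phi$. Transporting the orientations of $R(a)$ through $\phi$ yields an oriented link $M$ whose underlying unoriented link is that of $R(b)$; hence $M$ is obtained from $R(b)$ by reversing the orientations of the components indexed by some subset $S\subseteq [n]$. The main invariant is the linking number: in $R(a)$ every pair of components has linking number $+1$, since the two crossings between any two pseudocircles of $\rr_n$ are both positive. In $M$, the linking number between components $i$ and $j$ equals $+1$ when $\{i,j\}\subseteq S$ or $\{i,j\}\cap S=\emptyset$, and equals $-1$ otherwise (reversing exactly one of the two strands of a crossing flips its sign, while reversing both or neither preserves it). Since linking numbers are invariant under oriented isotopy and $R(a) \sim M$, all pairwise linking numbers of $M$ must equal $+1$, which forces $S=\emptyset$ or $S=[n]$. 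Therefore $M$ is either $R(b)$ or $R(\Ov{b})$, and Observation~\ref{obs:ifring} gives $R(a)\sim R(b)$ as oriented links in either case.

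With $\Phi$ a bijection, we obtain $\numbu{\rr_n}=\numbor{\rr_n}$, and Theorem~\ref{thm:arrr} completes the proof. The only substantive obstacle is the injectivity step: the remaining ingredients (the $2$-to-$1$ nature of the forgetful map and the appeal to Theorem~\ref{thm:arrr}) are routine once injectivity is established, and the linking number computation is the essential tool that rules out ``accidental'' unoriented coincidences not already captured by the oriented classification.
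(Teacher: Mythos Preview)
Your proof is correct and follows essentially the same approach as the paper's. Both arguments hinge on the identity $\numbu{\rr_n}=\numbor{\rr_n}$, derived from (i) the forgetful map $\linksor{\rr_n}\to\linksu{\rr_n}$ being $2$-to-$1$ with fibers $\{R(a),R(\Ov{a})\}$, and (ii) the fact that $R(a)\sim R(\Ov{a})$ as oriented links via $\Ho$. The paper phrases the key constraint as ``every pair of components forms a Hopf link, so fixing one orientation determines all others''; you make this precise by computing that every pairwise linking number in $R(a)$ equals $+1$ and using invariance of linking number to force the subset $S$ of reversed components to be $\emptyset$ or $[n]$. Your injectivity argument for $\Phi$ is more explicit than the paper's, which leaves that step somewhat implicit, but the underlying mechanism is the same.
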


\begin{corollary}[The number of positive unoriented links that project to $\bb_n$]\label{cor:uarrs}
\[
\numbu{\bb_n} = \biggl(\frac{1}{2}  + o(n)\biggr) \cdot 2^n.
\]
\end{corollary}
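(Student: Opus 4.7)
The plan is to reduce Corollary~\ref{cor:uarrs} to Proposition~\ref{pro:arrs} via a linking number argument that controls the refinement from unoriented equivalence to oriented equivalence. First, note that for every word $a$, the oriented links $B(a)$ and $B(\bar a)$ represent the \emph{same} underlying unoriented diagram: reversing both strands at a positive crossing preserves both its positivity and its over/under decoration, so $B(\bar a)$ is obtained from $B(a)$ by simultaneously reversing the orientations of all components. Hence $B(a)$ and $B(\bar a)$ are always unoriented-equivalent. The key claim is the converse: \emph{if $a$ has rank $r\ge 6$ and $b$ is any word with $B(a)$ and $B(b)$ equivalent as unoriented links, then $b\in\{a,\bar a\}$.}

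To prove this claim, suppose $\mathcal I$ is an ambient isotopy witnessing the unoriented equivalence of $B(a)$ and $B(b)$. Lifting orientations, there exists $\epsilon=(\epsilon_1,\ldots,\epsilon_n)\in\{-1,1\}^n$ such that $\Isot{\mathcal I}{B(a)}{\epsilon\cdot B(b)}$ as oriented links, where $\epsilon\cdot B(b)$ denotes $B(b)$ with the orientation of component $i$ reversed exactly when $\epsilon_i=-1$. Reversing one strand at a positive crossing flips its sign while reversing both preserves it, so every crossing between components $i$ and $j$ in $\epsilon\cdot B(b)$ has sign $\epsilon_i\epsilon_j$. Because in $\bb_n$ each pair of intersecting pseudocircles contributes exactly two crossings, the pairwise linking number between such components is $+1$ in $B(a)$ and $\epsilon_i\epsilon_j$ in $\epsilon\cdot B(b)$, while non-intersecting pairs give linking number $0$ in both.

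Linking numbers are ambient-isotopy invariants of oriented links, so, writing $\pi$ for the permutation of components induced by $\mathcal I$, matching the pairwise linking numbers forces $\pi$ to be an automorphism of the intersection graph $G$ of $\bb_n$ (vertices $[n]$, edges recording intersecting pairs of pseudocircles) and $\epsilon_{\pi(i)}\epsilon_{\pi(j)}=+1$ for every edge $\{\pi(i),\pi(j)\}$ of $G$; equivalently, $\epsilon_i=\epsilon_j$ for every edge $\{i,j\}$ of $G$. A direct inspection of $\bb_n$ shows that $G$ is connected, so $\epsilon$ must be constant. Therefore either $\epsilon\cdot B(b)=B(b)$ or $\epsilon\cdot B(b)=B(\bar b)$, which yields $B(a)\sim B(b)$ or $B(a)\sim B(\bar b)$ as oriented links; Proposition~\ref{pro:arrs} now delivers $b=a$ or $b=\bar a$, establishing the claim.

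To conclude, since the probability that a random word of length $n$ has rank at least $6$ tends to $1$, and since $a\neq\bar a$ always holds, the probability that the unoriented equivalence class containing $B(a)$ consists of exactly the two oriented links $B(a)$ and $B(\bar a)$ tends to $1$. Combined with $|\linksor{\bb_n}|=2^n$, this yields $\numbu{\bb_n}=(1/2+o(n))\cdot 2^n$. The main obstacle is the linking-number bookkeeping in the presence of the (a priori unknown) permutation $\pi$ induced by $\mathcal I$: one must carefully argue that matching of linking numbers under $\pi$ forces $\epsilon_i=\epsilon_j$ for every pair of intersecting pseudocircles of $\bb_n$, which then combines with the connectivity of $G$ to force $\epsilon$ constant and reduces the problem to the already understood oriented case.
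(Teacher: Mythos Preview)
Your proof is correct and follows essentially the same approach as the paper: the key step is that unoriented equivalence of positive boot links forces oriented equivalence up to global orientation reversal, which you establish via linking numbers and the paper phrases as ``every pair of components form a Hopf link'' (referring back to the proof of Corollary~\ref{cor:uarrr}). Your version is more explicit about the bookkeeping with the permutation $\pi$ and the connectivity of the intersection graph; note that since $\bb_n$ is an arrangement of pseudocircles, every pair of pseudocircles intersects, so the intersection graph is in fact the complete graph $K_n$, making the connectivity step immediate and the ``non-intersecting pairs'' clause vacuous.
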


\begin{corollary}[The number of positive unoriented links that project to $\ff_n$]\label{cor:uarrf}
\[
\numbu{\ff_n} = \biggl(\frac{1}{{4n}} + o(n)\biggr) \cdot 2^n.
\]
\end{corollary}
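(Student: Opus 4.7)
The plan is to reduce Corollary~\ref{cor:uarrf} to Theorem~\ref{thm:arrf} by analysing a natural two-to-one collapse from $\linksor{\ff_n}$ to $\linksu{\ff_n}$. For any word $a$, reversing the orientations of all $n$ pseudocircles flips both strands at every crossing, hence preserves each crossing sign, so $F(a)$ and $F(\Ov{a})$ share the same underlying unoriented positive diagram $U_a$. Conversely, since in the flower arrangement every pair of pseudocircles intersects, flipping the orientation of any proper non-empty subset of pseudocircles alters the over/under at at least one crossing; thus the only two orientations producing the diagram $U_a$ are $a$ and $\Ov{a}$. Consequently $|\linksu{\ff_n}|=2^{n-1}$, with unoriented positive diagrams indexed by the unordered pairs $\{a,\Ov{a}\}$.

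Next, I would establish the following central claim: $U_a\sim U_b$ as unoriented links if and only if $F(a)\sim F(b)$ or $F(a)\sim F(\Ov{b})$ as oriented links. The ``if'' direction is immediate, since an oriented isotopy is in particular unoriented and $U_b=U_{\Ov{b}}$. For ``only if'', an unoriented ambient isotopy $\ii:U_a\to U_b$ transports the orientation of $F(a)$ to an oriented link $L$ with underlying unoriented link $U_b$ and with $L\sim F(a)$ as oriented links. Since $F(a)$ is positive and positivity is preserved by oriented equivalence, $L$ is a positive oriented link equivalent via Corollary~\ref{cor:arrf} to some $F(c)$; because the underlying unoriented link of $L$ is $U_b$ while that of $F(c)$ is $U_c$, the bijection $\{c,\Ov{c}\}\leftrightarrow U_c$ established in the previous paragraph forces $c\in\{b,\Ov{b}\}$, whence $F(a)\sim F(b)$ or $F(a)\sim F(\Ov{b})$. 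Rephrasing, the unoriented class of $U_a$ in $\linksu{\ff_n}$ is exactly the quotient by $b\leftrightarrow\Ov{b}$ of the set $\{b:b\equiv a \text{ or } b\equiv\Ov{a}\}$.

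To finish, it suffices to show two generic-size estimates for a uniformly random length-$n$ word $a$. First, the proof of Theorem~\ref{thm:arrf} already shows that $|\{c:c\equiv a\}|=2n$ with probability tending to $1$. Second, the self-duality event $\Ov{a}\equiv a$ has probability tending to $0$: by Remark~\ref{rem:equiv2} it forces $\Ov{a}=\ro^s(a)$ or $\Ov{a}=\ro^s\circ\ve(a)$ for some $s\in\{0,\dots,n-1\}$; the former imposes the shift-antisymmetry $a_j=\Ov{a_{j+s}}$, with at most $2^{\gcd(s,n)}\le 2^{n/2}$ solutions (and none when $n/\gcd(s,n)$ is odd), while the latter imposes a palindromic reflection $a_i=a_{n-i+s+1}$ with at most $2^{\lceil n/2\rceil+O(1)}$ solutions; summing over the $O(n)$ choices of $s$ and of flip yields $O(n\cdot 2^{n/2})=o(2^n)$ words. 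When both events hold, the two $\equiv$-classes $\{c:c\equiv a\}$ and $\{c:c\equiv\Ov{a}\}$ are disjoint of total size $4n$ and closed under $b\mapsto\Ov{b}$, so the unoriented class of $U_a$ has exactly $2n$ elements in $\linksu{\ff_n}$. Hence
\[
\numbu{\ff_n}=(1+o(1))\cdot\frac{2^{n-1}}{2n}=\biggl(\frac{1}{4n}+o(1)\biggr)\cdot 2^n,
\]
which is Corollary~\ref{cor:uarrf}. The main obstacle is the ``only if'' direction of the central claim in the second paragraph, which rests on the fact that a positive oriented link equivalent to $F(a)$ must itself be representable as some $F(c)$ on the shadow $\ff_n$; once this is granted, the remainder is routine asymptotic bookkeeping descending from Theorem~\ref{thm:arrf}.
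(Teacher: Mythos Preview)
Your overall strategy matches the paper's: each unoriented positive flower diagram comes from exactly the pair $\{F(a),F(\Ov a)\}$, and with high probability these lie in distinct oriented classes (since $a\not\equiv\Ov a$ generically), so oriented classes merge in pairs into unoriented classes and the count halves relative to Theorem~\ref{thm:arrf}. Your explicit estimate for $|\{a:\Ov a\equiv a\}|$ is more detailed than the paper's one-line assertion and is correct.

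The obstacle you flag in your second paragraph is genuine, but your route through Corollary~\ref{cor:arrf} is a red herring: that corollary classifies words $c$ with $F(a)\sim F(c)$, it does not show that an arbitrary positive orientation of $U_b$ is some $F(c)$. Likewise, the bijection $\{c,\Ov c\}\leftrightarrow U_c$ from your first paragraph relates pairs of words to \emph{diagrams}, not to link \emph{types}, so it cannot by itself force $c\in\{b,\Ov b\}$ from a statement about types. The clean fix is already implicit in your first paragraph, once upgraded to an invariant: in $F(b)$ every pair of components has linking number $+1$, so any orientation of the unoriented diagram $D_b$ other than $F(b)$ or $F(\Ov b)$ has a pair with linking number $-1$ and hence cannot be a positive link type. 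Arranging the unoriented isotopy to carry $D_a$ onto $D_b$ and pushing forward the orientation of $F(a)$ yields an orientation $L$ of $D_b$ with $L\sim F(a)$ positive, so $L\in\{F(b),F(\Ov b)\}$, and the ``only if'' follows. This is exactly the Hopf-link reasoning the paper invokes by referring back to the proof of Corollary~\ref{cor:uarrr}.
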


\begin{proof}[Proof of Corollary~\ref{cor:uarrr}]
Since all links under consideration are positive, then every pair of components form a Hopf link. Therefore, if the orientation of one component is fixed then  the orientations of all the other components are determined (since each crossing must be positive). Hence an unoriented link that projects to a ring arrangement $\rr_n$ is the ``unorientation'' of exactly two oriented ring links $R(a)$ and $R(\Ov{a})$, for some word $a$ of length $n$. Now $R(a)$ and $R(\Ov{a})$ (as oriented links) are in the same equivalence class, as one can obtained from the other by using the isotopy $\Ho$ (see Section~\ref{sub:isotopiesring}).  Therefore the number of equivalence classes for unoriented ring links is the same as for oriented ring links, and so the corollary follows from Theorem \ref{thm:arrr}.
\end{proof}

\begin{proof}[Proof of Corollary~\ref{cor:uarrs}]
As in the proof of Corollary~\ref{cor:uarrr}, an unoriented link that projects to a boot arrangement $\bb_n$ is the unorientation of exactly two oriented boot links $B(a)$ and $B(\Ov{a})$, for some word $a$ of length $n$. Now if the rank of $a$ is at least $6$ then Proposition~\ref{pro:arrs} implies that $B(a)$ and $B(\Ov{a})$ (as oriented links) are {\em not} in the same equivalence class. 

Since the probability that $a$ has rank at least $6$ goes to $1$ as $n$ goes to infinity, it follows that with high probability the equivalence classes of $B(a)$ and $B(\Ov{a})$ are distinct and they merge into a single equivalence class when they are regarded as unoriented links. Therefore the corollary follows from Theorem~\ref{thm:arrs}.
\end{proof}

\begin{proof}[Proof of Corollary~\ref{cor:uarrf}]
As in the proof of the previous two corollaries, an unoriented link that projects to a flower arrangement $\ff_n$ is the unorientation of exactly two oriented flower links $F(a)$ and $F(\Ov{a})$, for some word $a$ of length $n$. Now if the rank of $a$ is at least $6$ then Proposition~\ref{pro:arrf} implies that $F(a)$ and $F(\Ov{a})$ (as oriented links) are {\em not} in the same equivalence class: indeed, it is not difficult to show that the probability that a random word $a$ of size $n$ satisfies that $a\equiv \Ov{a}$ goes to $0$ as $n$ goes to infinity. Therefore with high probability the equivalence classes of $F(a)$ and $F(\Ov{a})$ are distinct and they merge into a single equivalence class when they are regarded as unoriented links. Therefore the corollary follows from Theorem~\ref{thm:arrf}.
\end{proof}

As we mentioned in the proof of Corollary~\ref{cor:uarrf}, for every word $a$ the flower links $F(a)$ and $F(\Ov{a})$ are equivalent as unoriented links, since in $\Ov{a}$ all orientations are opposite to those in $a$, and since all crossings are positive then all the over/under assignments in these links are identical. It may happen that $F(a)$ and $F(\Ov{a})$ are also equivalent as oriented links: for instance, $F(010101)$ is equivalent to $F(101010)$, implying that $F(010101)$ is invertible. However, this is rather exceptional, as in general it is not true that $\Ov{a}\equiv a$. Indeed, an elementary calculation shows that the probability that $\Ov{a}\equiv a$ goes to $0$ as $n$ goes to infinity, and so (in view of Corollary~\ref{cor:arrf}) with high probability the oriented flower link $F(a)$ is non-invertible.

\section*{Acknowledgements}

The second and fourth authors were supported by CONACYT under Proyecto Ciencia de Frontera 191952. The third author was partially supported by INSMI--CNRS.

\bibliographystyle{abbrv}
\bibliography{refs.bib}
\end{document}